
\documentclass{article}




\usepackage[utf8]{inputenc}   
\usepackage[T1]{fontenc}      
\usepackage{multicol}
\usepackage{amsmath,amsthm} 
\usepackage{amssymb,mathrsfs} 
\usepackage{amssymb}
\usepackage{amsfonts}
\usepackage[normalem]{ulem}
\usepackage{nicefrac}
\usepackage{geometry}
\usepackage{graphicx}
\usepackage {psfrag}


\usepackage{enumerate}
\usepackage[T1]{fontenc}
\usepackage{url}
\usepackage{graphicx} 
\usepackage{lmodern} 
\usepackage{mathtools}

\RequirePackage{tikz}
\usetikzlibrary{external}

\usepackage{ifthen}
\usepackage{pgfplots}
\pgfplotsset{compat=1.9}
\usepackage{scalefnt}
\usepackage[squaren,Gray]{SIunits}

\usepackage{physics}
\usepackage{dsfont}		
\usepackage{hyperref}




\DeclareMathOperator*{\argmin}{\arg\!\min}

\newcommand{\dt}{\dd{t}}



\newcommand{\paq}{\partial_q}
\newcommand{\pap}{\partial_p}

\newcommand{\papn}{\partial_{p_n}}
\newcommand{\papone}{\partial_{p_1}}
\newcommand{\papN}{\partial_{p_N}}
\newcommand{\parn}{\partial_{r_n}}

\newcommand{\half}{{\nicefrac{1}{2}}}

\newcommand{\Dt}{{\Delta t}}



\newcommand{\calB}{\mathcal{B}}
\newcommand{\calC}{\mathcal{C}}
\newcommand{\calD}{\mathcal{D}}

\newcommand{\calK}{\mathcal{K}}
\newcommand{\calL}{\mathcal{L}}
\newcommand{\calM}{\mathcal{M}}

\newcommand{\calS}{\mathcal{S}}

\newcommand{\calX}{\mathcal{X}}

\newcommand{\bbN}{\mathbb{N}}

\newcommand{\bbR}{\mathbb{R}}
\newcommand{\bbC}{\mathbb{C}}
\newcommand{\bbT}{\mathbb{T}}

\newcommand{\bbone}{\mathds{1}}

\newcommand{\Esp}{\mathbb{E}}
\newcommand{\bbE}{\mathbb{E}}

\newcommand{\rme}{\mathrm{e}}

\newcommand{\rmH}{\mathrm{H}}
\newcommand{\rmL}{\mathrm{L}}
\newcommand{\rmO}{\mathrm{O}}
\newcommand{\rmR}{\mathrm{R}}

\newcommand{\rmVar}{\mathrm{Var}}

\newcommand{\bfA}{\mathbf{A}}

\newcommand{\bfJ}{\mathbf{J}}
\newcommand{\bfK}{\mathbf{K}}

\newcommand{\bfM}{\mathbf{M}}
\newcommand{\bfN}{\mathbf{N}}

\newcommand{\bfR}{\mathbf{R}}
\newcommand{\bfS}{\mathbf{S}}

\newcommand{\bfone}{\mathbf{1}}

\newcommand{\scrR}{\mathscr{R}}


\usepackage{subfigure}
\usepackage{algorithm}
\usepackage{algorithmic}
\geometry{hmargin=1.5cm,vmargin=1.5cm}
\usepackage{a4wide}


\newtheorem{theorem}{Theorem}
\newtheorem{lemma}{Lemma}
\newtheorem{assumption}{Assumption}
\newtheorem{prop}{Proposition}
\newtheorem{corollary}{Corollary}
\newtheorem{remark}[theorem]{Remark}


\usepackage{cancel}

\newcommand{\Lnot}{\calL_0}
\newcommand{\Lnotinv}{\calL_0^{-1}}
\newcommand{\Leta}{\calL_\eta}
\newcommand{\Letainv}{{\calL_\eta^{-1}}}

\newcommand{\Lrep}{\widetilde{\mathcal{L}}}
\newcommand{\tcalL}{\widetilde{\calL}}

\newcommand{\LFD}{\calL_\mathrm{FD}}

\newcommand{\Lpert}{\calL_\mathrm{pert}}

\newcommand{\calLinv}{{\calL^{-1}}}

\newcommand{\Phinot}{{\Phi_0}}
\newcommand{\Phieta}{\Phi_\eta}

\newcommand{\Phinoteps}{\Phi_{0,\varepsilon}}

\newcommand{\pinot}{\pi_0}
\newcommand{\pieq}{\pi_{\rm eq}}

\newcommand{\pieta}{\pi_\eta}

\newcommand{\rmLinf}{{\rmL^\infty}}
\newcommand{\rmLinfn}{{\rmL_n^\infty}}
\newcommand{\rmLinfm}{{\rmL_m^\infty}}



\newcommand{\Lpinot}{{\rmL^2(\pinot)}}
\newcommand{\Lpieq}{{\rmL^2(\pieq)}}
\newcommand{\tLpi}{{\rmL_0^2(\pi)}}
\newcommand{\Lpieta}{{\rmL^2(\pieta)}}

\newcommand{\calBLmu}{{\calBLmu}}
\newcommand{\calBLnu}{{\calBLnu}}
\newcommand{\calBLkappa}{{\calBLkappa}}

\newcommand{\lang}{\left\langle}

\newcommand{\rangeq}{\right\rangle_{\rm eq}}
\newcommand{\rangnot}{\right\rangle_0}
\newcommand{\rangeta}{\right\rangle_{\eta}}

\newcommand{\Espnot}{\mathbb{E}_0}

\newcommand{\Espeta}{\mathbb{E}_\eta}

\newcommand{\TL}{T_{\rmL}}
\newcommand{\TR}{T_{\rmR}}

\newcommand{\betainv}{\beta^{-1}}
\newcommand{\betainvinv}{\beta^{-2}}

\newcommand{\Pinot}{\Pi_0}

\newcommand{\Pieta}{\Pi_\eta}

\newcommand{\PiM}{\Pi_M}

\newcommand{\VM}{V_M}
\newcommand{\PhiM}{\Phi_M}



\RequirePackage{xcolor}


\definecolor{line01}{RGB}{242, 201,  49}
\definecolor{line02}{RGB}{ 33, 110, 180}
\definecolor{line03}{RGB}{154, 153,  64}
\definecolor{line04}{RGB}{187,  77, 152}
\definecolor{line05}{RGB}{222, 139,  83}
\definecolor{line06}{RGB}{121, 187, 146}
\definecolor{line07}{RGB}{223, 154, 177}
\definecolor{line08}{RGB}{197, 163, 202}
\definecolor{line09}{RGB}{205, 200,  63}
\definecolor{line10}{RGB}{223, 176,  57}
\definecolor{line11}{RGB}{142, 101,  56}
\definecolor{line12}{RGB}{ 50, 142,  91}
\definecolor{line13}{RGB}{137, 199, 214}
\definecolor{line14}{RGB}{103,  50, 142}

\definecolor{linea}{RGB}{211,  94,  60}
\definecolor{lineb}{RGB}{ 78, 144, 204}

\colorlet{dark_color}{black!67}
\tikzstyle{dark}  =[very thick, color=dark_color]
\tikzstyle{blue}  =[very thick, color=lineb]
\tikzstyle{red}  =[very thick, color=linea]
\tikzstyle{green}  =[very thick, color=line06]

\newcommand{\Ldim}{\calL_{\rm dim}}
\newcommand{\Linter}{\calL_{\rm inter}}
\newcommand{\Lsol}{\calL_{\rm sol}}
\newcommand{\omegahar}{\hat \omega}
\newcommand{\rhar}{\hat r}
\newcommand{\PhinotM}{\Phi_{0,M}}
\newcommand{\tdeco}{t_{\rm deco}}

\newcommand{\tsimu}{T}
\newcommand{\rcut}{r_{\rm cut}}

\usepackage{subfigure}

\usepackage{graphicx}
\usepackage{grffile}

\graphicspath{{Img/}}

\title{A perturbative approach to control variates in molecular dynamics}
\author{Roussel Julien, Stoltz Gabriel \\
{\small Université Paris-Est, CERMICS (ENPC), Inria, F-77455 Marne-la-Vallée, France}}
\date{\today}

\begin{document}

\maketitle

\abstract
{
We propose a general variance reduction strategy to compute averages with diffusion processes. Our approach does not require the knowledge of the measure which is sampled, which may indeed be unknown as for nonequilibrium dynamics in statistical physics. We show by a perturbative argument that a control variate computed for a simplified version of the model can provide an efficient control variate for the actual problem at hand. We illustrate our method with numerical experiments and show how the control variate is built in three practical cases: the computation of the mobility of a particle in a periodic potential; the thermal flux in atom chains, relying on a harmonic approximation; and the mean length of a dimer in a solvent under shear, using a non-solvated dimer as the approximation.
}

\section{Introduction}

Diffusion processes have won an increasing interest in the past years in the statistical physics community, to model physical phenomena and to sample the underlying probability measure characterizing the state of the system~\cite{Balian07}. The average value of a thermodynamic function~$R$ (as the energy, the pressure, a length, a flux, ...) under this probability distribution is given by an integral over the very high-dimensional configurational space. An important motivation for this work is the averaging of mean properties for systems subject to an external driving. In this case the invariant probability measure is often not known explicitly. The goal can be to compute a transport coefficient, a free energy or more generally the response to a non-equilibrium forcing. From a practical point of view, the unknown probability measure is sampled by integrating a stochastic dynamics~\cite{Allen87,Frenkel02,Tuckerman10,Leimkuhler16}
\begin{equation}
\label{eq:process intro}
	\dd X_t = b(X_t) \, \dd t + \sigma(X_t) \, \dd W_t.
\end{equation}
Two prototypical dynamics in molecular simulation are the Langevin and overdamped Langevin dynamics~\cite{Allen87,Leimkuhler16_book}. At equilibrium, the Langevin dynamics evolves positions $q$ and momenta $p$ as
\begin{equation}
  \label{eq:Langevin_introduction}
\left\{
\begin{aligned}
  \dd q_t &= \frac {p_t} m \, \dd t, \\
  \dd p_t &= -\nabla V(q_t) \, \dd t - \frac \gamma m p_t \, \dd t + \sqrt{2 \gamma \betainv} \, \dd W_t,
\end{aligned}
\right.
\end{equation}
where $\gamma>0$ is the friction coefficient, $m>0$ is the mass of a particle and $\beta > 0$ is proportional to the inverse temperature. The potential energy function is denoted by $V$ and $W_t$ is a multi-dimensional standard Brownian motion. In the limit of large frictions $\gamma$, this equation becomes after proper rescaling the overdamped Langevin dynamics~\cite{Freidlin04}:
\begin{equation}
\label{eq:cv overdamped}
\dd q_t = -\nabla V(q_t) \, \dd t + \sqrt{2 \betainv} \, \dd W_t.
\end{equation}
Nonequilibrium versions of the above dynamics are obtained for instance by considering non-gradient forces rather than $-\nabla V$.

For any ergodic dynamics~\eqref{eq:process intro}, macroscopic properties are computed via averages over a trajectory as
\begin{equation*}
  \Esp[R] := \lim_{T \to \infty} \widehat R_T \quad \mbox{a.s.}, \qquad \widehat R_T = \frac 1 T \int_0^T R(X_t) \, \dd t.
\end{equation*}
The statistical error for these estimators is characterized by the asymptotic variance:
\begin{equation}
\label{eq:def variance}
	\sigma_R^2 = \lim_{T \to \infty} T \, \rmVar\left[\widehat R_T \right].
\end{equation}
In many cases of interest ergodic means converge very slowly, requiring the use of variance reduction techniques to speed up the computation. Two types of phenomena can lead to large statistical errors: first, the metastability arising from multimodal potentials, which can greatly increase the correlation of the trajectory in time and lead to large variances; second, a high signal-to-noise ratio, which is typical when averaging small linear responses as for the computation of transport coefficients~\cite{Evans13_book, Tuckerman10}.

When the system is at equilibrium, by which we mean that detailed balance holds, the invariant probability measure is often known and it is possible to use standard variance reduction techniques~\cite{Rubinstein81, Fishman96, Caflisch98, Liu01, Lapeyre03} such as importance sampling~\cite{Chen97, Liu01} or stratification~\cite{Geyer94, Meng96, Kong03, Tan04, Shirts08}. This allows to address both metastability issues and high noise-to-signal ratios.

For non-equilibrium systems, and more generally when the invariant probability measure is not known, reducing the variance is challenging since standard variance reduction methods cannot be used. Note that reducing the metastability would require modifying the dynamics while keeping the invariant probability measure unchanged, or at least knowing how it changes (see for instance~\cite[Section~3.4]{Lelievre16}). When the latter is not known, this task is hard to perform. On the other hand increasing the signal-to-noise ratio is feasible even for non-equilibrium dynamics. This is the goal of the present work, where we rely on control variates. Control variates laying on the concept of "zero-variance" principle have been already used in molecular simulation~\cite{Assaraf99}. This approach was also studied in Bayesian inference simulations~\cite{Henderson97, Mira03, Dellaportas12, Mira13, Oates17}, where configurations are sampled with Markov chains rather than diffusion processes. This type of techniques has however been restricted to cases where the invariant probability measure is known, except for specific settings such as~\cite{Goodman09}, where a coupling strategy is described.

In the present work, which relies on ideas announced in~\cite[Section 3.4.2]{Lelievre16}, control variates are constructed without any knowledge of the expression of the invariant probability measure. We build an unbiased modified observable $R+\xi=R+\calL \Phi$, where $\calL$ is the generator of the dynamics, which is of smaller variance (at least in some asymptotic regime):
\begin{equation*}
	\Esp[R + \xi] = \Esp[R] \quad \mbox{and} \quad \sigma_{R + \xi}^2 < \sigma_R^2.
\end{equation*}
The optimal choice for the control variate $\xi$ is $\xi = \calL \Phi$ where $\Phi$ is the solution of the following Poisson equation:
\begin{equation*}
	-\calL \Phi = R - \Esp[R].
\end{equation*}
The general strategy we consider consists in approximating this partial differential equation (PDE) by a simplified one, with an operator $\Lnot$, for which the solution $\Phinot$ can be analytically computed or numerically approximated with a good precision. Theorems~\ref{th:var order 2 eta exact} and~\ref{th:var order 2 eta} provide an analysis of the asymptotic variance $\sigma_{R + \xi}^2$.

We present numerical results illustrating the general method in three practical cases. In particular we provide in each case a simplified process, associated to a simplified Poisson problem. In these applications we are interested in averaging the linear response of an observable with respect to a non-equilibrium perturbation. This is a challenging class of problems since the average quantity is small and thus the relative statistical error is large. We present the problems we consider by increasing complexity of the setup. We start with the computation of the mobility of a particle in a periodic two-dimensional potential. The control variate can be approached with a very high precision by a numerical method based on a spectral basis, allowing to illustrate Theorem~\ref{th:var order 2 eta}. We next estimate the conductivity of an atom chain~\cite{Bonetto00, Lepri16, Dhar08}. The number of state variables is much larger (up to several hundreds of degrees of freedom in our simulations) but the geometrical setting is one-dimensional. The control variate can be computed analytically when taking a harmonic model as a reference, and thus it does not require any additional numerical procedure. The third application is a dimer in a solvent, whose mean length is estimated under an external shearing force. In the latter case the difficulty comes from the fact that the system is high-dimensional and not as structured as the atom chain.

This article is organized as follows. We present in Section~\ref{s:strategy} the general strategy for building control variates and state a result making precise how control variates behave in a perturbative framework. We then turn to the case of a single particle in a one-dimensional periodic potential under a non-gradient forcing in Section~\ref{s:sinus potential}; the computation of the thermal flux passing through a chain in Section~\ref{s:quasi harmonic}; and the estimation of the mean length of a dimer in a solvent under an external shearing stress in Section~\ref{s:dimer}. Some technical results are gathered in the appendices.

\section{General strategy}
\label{s:strategy}

The definition of the asymptotic variance of time averages along a trajectory requires to introduce more precisely the generator of the process and some associated functional spaces, which is done in Section~\ref{ss:asyvar}. The concept of control variate is then explained Section~\ref{ss:ideal cv}, as well as the so-called "zero-variance principle". We give in Section~\ref{ss:perturbative} our perturbative construction of control variate in an abstract setting and state the main theorem quantifying the variance reduction in a limiting regime. Finally Section~\ref{ss:numerics poisson} provides a generalized version of this theorem in the case when an approximate solver is used.

\subsection{Asymptotic variance}
\label{ss:asyvar}

\newcommand{\Xteta}{X_t^\eta}

The state space $\calX$ is typically the full space $\bbR^d$ or a bounded domain with periodic boundary conditions $\bbT^d$. For some dynamics such as Langevin dynamics, auxiliary variables with values in $\bbR^d$ are added, so that in this case $\calX = \bbR^d \times \bbR^d$ or $\calX = \bbT^d \times \bbR^d$. As suggested in the introduction, we decompose the generator of the process~\eqref{eq:process intro} as a sum $\calL = \Lnot + \Lrep$ of a reference generator $\Lnot$ and a perturbation $\Lrep$. In order to study the asymptotic regime corresponding to small perturbations, we use a parameter $\eta \in \bbR$ to interpolate smoothly between $\Lnot$ and $\calL$, and define
\begin{equation*}
	\Leta = \Lnot + \eta \Lrep.
\end{equation*}
We suppose that these operators $\Leta$ write:
\begin{equation*}
	\Leta = b_\eta\cdot \nabla + \frac 1 2 \sigma_\eta \sigma_\eta^\top : \nabla^2 = \sum_{i=1}^d \left( b_\eta \right)_i \partial_{x_i} + \frac 1 2 \sum_{i,j=1}^d \left(\sigma_\eta \sigma_\eta^\top \right)_{i,j} \partial_{x_i x_j},
\end{equation*}
with $b_\eta$ and $\sigma_\eta$ are smooth. They are then the generators of the following stochastic processes on $\calX$, indexed by $\eta$:
\begin{equation}
\label{eq:process}
	\dd \Xteta = b_\eta(\Xteta) \, \dd t + \sigma_\eta(\Xteta) \, \dd W_t,
\end{equation}
where $W_t$ is a $d$-dimensional standard Brownian motion. Let us assume that $b_\eta$ and $\sigma_\eta$ are such that the following holds.
\begin{assumption}
\label{as:pieta}
The dynamics~\eqref{eq:process} admits a unique invariant probability measure $\pieta$ for any $\eta \in \bbR$. Moreover, trajectorial ergodicity holds: for any observable $R \in \rmL^1(\pieta)$, 
\begin{equation*}
  \Espeta[R] := \int_\calX R(x) \, \dd \pieta(x) = \lim_{T \to \infty} \frac 1 T \int_0^T R(\Xteta) \, \dd t \quad \mathrm{a.s.}
\end{equation*}
\end{assumption}
Sufficient conditions for this to hold are discussed after Assumption~\ref{as:lyapunov in L2}. Let us now make the functional spaces precise. We denote by $(\calK_n)_{n \in \bbN}$ a family of so-called Lyapunov functions with values in $[1, +\infty)$. The associated weighted $\rmL^\infty$ spaces are:
\begin{equation*}
\begin{aligned}
  \forall n \in \bbN,\quad \rmLinfn &= \left\{ \varphi \ \mbox{measurable} \ \left| \ \|\varphi\|_\rmLinfn < \infty \right. \right\} , \qquad \|\varphi\|_{\rmL_n^\infty} = \left\| \frac \varphi {\calK_n} \right\|_\rmLinf.
\end{aligned}
\end{equation*}
We make the following assumption on the Lyapunov functions.
\begin{assumption}
\label{as:lyapunov in L2}
For any $\eta \in \bbR$, the function $\calK_n$ belongs to~$\Lpieta$. In particular,
\begin{equation*}
	\forall n \in \bbN,\, \forall \eta \in \bbR, \quad \rmLinfn \subset \Lpieta.
\end{equation*}
We also assume that for any $n,n' \in \bbN$, there exists $m \in \bbN$ such that $\calK_n \calK_{n'} \in \rmLinfm$.
\end{assumption}
The first part of Assumption~\ref{as:lyapunov in L2} is typically obtained by using a family of Lyapunov functions satisfying conditions of the form
\begin{equation}
\label{eq:lyapunov condition}
	\Leta \calK_n \leqslant -\alpha_{n,\eta} \calK_n + b_{n,\eta},
\end{equation}
for some $\alpha_{n,\eta}>0$ and $b_{n,\eta} \in \bbR$. Indeed, after integration against $\pieta$,
\begin{equation*}
  \begin{aligned}
    0 = \int_\calX \Leta \calK_n \dd \pieta \leqslant -\alpha_{n,\eta} \int_\calX \calK_n \dd \pieta + b_{n,\eta},
\end{aligned}
\end{equation*}
so that
\begin{equation*}
  1 \leqslant \int_\calX \calK_n \dd \pieta \leqslant \frac {b_{n,\eta}}{\alpha_{n,\eta}}.
\end{equation*}
We can then conclude with the second part of Assumption~\ref{as:lyapunov in L2} since, for any $n \geqslant 1$, there exist $C_n > 0$ and $m \geqslant 1$ such that $1 \leqslant \calK_n^2 \leqslant C_n \calK_m$. The condition~\eqref{eq:lyapunov condition} also implies the existence of an invariant probability measure for any $\eta \in \bbR$ when a minorization condition holds~\cite{Hairer11_chap}. A typical choice for the Lyapunov functions are the polynomials $\calK_n(x) = 1+|x|^n$. This choice satisfies the second part of Assumption~\ref{as:lyapunov in L2} when the invariant probability measure has moments of any order. Unless otherwise mentioned, we always consider this choice in the sequel (which is standard for Langevin and overdamped Langevin dynamics, see~\cite{Talay02,Mattingly02,Kopec14,Kopec15}). As for Assumption~\ref{as:pieta}, trajectorial ergodicity holds when the generator $\Leta$ is elliptic or hypoelliptic, and there exists an invariant probability measure with positive density with respect to the Lebesgue measure~\cite{Kliemann87}. The latter condition follows if the measure which appears in the minorization condition has a positive density with respect to the Lebesgue measure.

We denote for any function $\varphi \in \rmL^1(\pieta)$ the projection on the space of mean zero functions by:
\begin{equation*}
  \Pieta \varphi = \varphi - \Espeta[\varphi].
\end{equation*}
For any operator $A \in \calB(E)$ (bounded on the Banach space $E$), the operator norm is defined as
\begin{equation*}
	\| A \|_{\calB(E)} = \sup_{\| \varphi \|_E = 1} \| A \varphi \|_E.
\end{equation*}
Let us now make the following assumption.
\begin{assumption}
\label{as:generator eta}
For any $n \in \bbN$, the $\rmL^2(\pieta)$ norms of the Lyapunov functions are uniformly bounded on compact sets of $\eta$: for any $\eta_*>0$ there exists a constant $C_{n,\eta_*}$ such that
\begin{equation}
\label{eq:uniform lyapunov measure}
	\forall |\eta| \leqslant \eta_*, \quad \| \calK_n \|_\Lpieta \leqslant C_{n,\eta_*}.
\end{equation}
Moreover $\Leta$ is invertible on $\Pieta \rmLinfn$. Finally the inverse generator is bounded uniformly on compact sets of $\eta$: 
\begin{equation}
\label{eq:uniform inverse generator}
	\forall |\eta| \leqslant \eta_*, \quad \left\| -\Letainv \right\|_{\calB \left(\Pieta \rmLinfn \right)} \leqslant C_{n,\eta_*}.
\end{equation}
\end{assumption}
The invertibility of $\Leta$ on $\Pieta \rmLinfn$ is a standard result which follows typically from the Lyapunov conditions~\eqref{eq:lyapunov condition} and a so-called minorization condition~\cite{Hairer11_chap}. It has been proved for a large variety of problems~\cite{Eckmann99,Talay02,Mattingly02,Kopec14,Kopec15}. Conditions~\eqref{eq:uniform lyapunov measure} and~\eqref{eq:uniform inverse generator} are needed to prove Theorems~\ref{th:var order 2 eta exact} and~\ref{th:var order 2 eta} to come. Condition~\ref{eq:uniform lyapunov measure} can be obtained by showing uniform bounds on the coefficients which appear in the Lyapunov conditions, while condition~\eqref{eq:uniform inverse generator} additionally requires some uniformity on the minorization condition.

When Assumptions~1 to 3 hold, the asymptotic variance introduced in~\eqref{eq:def variance} is finite for any $\varphi \in \rmLinfn$ and the following formula holds~\cite{Lelievre16}:
\begin{equation}
  \label{eq:variance eta}
  \sigma_{\varphi, \eta}^2 = 2 \lang \varphi, -\Letainv \Pieta \varphi \rangeta,
\end{equation}
where $\lang \cdot, \cdot \rangeta$ denotes the canonical scalar product on~$\Lpieta$. We refer to Appendix~\ref{ap:variance} for more details on the numerical estimation of the asymptotic variance.
\begin{remark}
We choose to work directly with weighted $\rmL^\infty$ spaces as this is the relevant setting for Theorems~\ref{th:var order 2 eta exact} and~\ref{th:var order 2 eta}. Note that the asymptotic variance of an observable $\varphi \in \Lpieta$ can also be defined using perturbative arguments relatively to an equilibrium reference dynamics~\cite{Dolbeault15, Iacobucci17}. Contrarily to the $\rmLinfn$ framework one would however be restricted in this case to small non-equilibrium perturbations.
\end{remark}

\subsection{Ideal control variate}
\label{ss:ideal cv}

We recall in this section what a control variate is in our context and show how the construction of an optimal control variate can be reformulated as solving a Poisson problem. The functional framework is made precise in a second step using Assumption~\ref{as:generator eta}. We say that a function $\xi$ is a control variate of the observable $R$ for the process $\Xteta$ with generator $\Leta$ if
\begin{equation*}
	\Espeta[R + \xi] = \Espeta[R] \quad \mbox{and} \quad \sigma_{R + \xi, \eta}^2 < \sigma_{R, \eta}^2.
\end{equation*}
The principle of our method, already explained in~\cite{Lelievre16}, is based on the equation which characterizes the invariance of the measure $\pieta$: for any function $\Phi$,
\begin{equation*}
	\Espeta[\Leta \Phi] = 0.
\end{equation*}
This shows that control variates $\xi$ of the form $\xi = \Leta \Phi$ automatically ensure that $\Espeta[R + \xi] = \Espeta[R]$, whatever the choice of $\Phi$. In order for $\xi$ to be a good control variate, the modified observable $R + \xi = R + \Leta \Phi$ should however be of small asymptotic variance. The optimal choice, denoted by $\Phieta$ and referred to as the "zero-variance principle"~\cite{Assaraf99, Mira03}, is to make the modified observable constant. This constant is then necessarily equal to $\Espeta[R]$, and $\Phieta$ is the solution of the Poisson problem:
\begin{equation}
\label{eq:cv poisson problem}
	-\Leta \Phieta = R - \Espeta[R].
\end{equation}
Assuming that $R \in \rmLinfn$ for some $n\in \bbN$, the problem~\eqref{eq:cv poisson problem} admits a unique solution $\Phieta \in \rmLinfn$ when Assumption~\ref{as:generator eta} holds.

In practice two problems arise when trying to solve~\eqref{eq:cv poisson problem}. First, the equation~\eqref{eq:cv poisson problem} is a very high-dimensional PDE for most purposes and the complexity of such problems scales exponentially with the dimension. Second, $\Espeta[R]$ is not known since it is precisely the quantity we are trying to compute. We discuss in the next section how to approximate the solution of~\eqref{eq:cv poisson problem}, at least for small $\eta$.

\subsection{Perturbative control variate}
\label{ss:perturbative}

The key assumption in our approach is to assume that we can compute the solution $\Phi_0$ of the reference Poisson problem corresponding to $\eta=0$:
\begin{equation}
\label{eq:poisson problem simple}
-\Lnot \Phi_0 = R - \Esp_0[R].
\end{equation}
In practice $\Lnot$ is the generator of a simplified dynamics (depending on the problem), and $\Leta$ is the generator of the problem at hand (say for $\eta=1$). Let us emphasize that the dynamics associted with $\mathcal{L}_0$ need not be an equilibrium dynamics (see the example discussed in Section~\ref{s:quasi harmonic}). The small parameter $\eta$ is used to quantify the discrepancy between the optimal function $\Phieta$ and its approximation $\Phinot$ in a perturbative framework. We refer to Section~\ref{ss:numerics poisson} for a discussion on the numerical resolution of~\eqref{eq:poisson problem simple}, and to the end of this section for further comments on the decomposition.

We define the so-called core space $\calS$ as the set of all $\calC^\infty$ functions which grow at infinity at most like $K_n$ for some $n$, and whose derivatives also grow at most like $K_n$ for some $n$. Such a space was considered in~\cite{Talay02} for instance. More precisely,
\begin{equation}
\label{eq:core}
\begin{aligned}
	\calS &= \big\{ \varphi \in \calC^\infty(\calX) \big|\ \forall k \in \bbN,\, \exists n \in \bbN, \quad\partial_k \varphi \in \rmLinfn \big\}.
\end{aligned}
\end{equation}
The space $\calS$ is dense in $\Lpieta$ under Assumption~\ref{as:lyapunov in L2}, since $\calC^\infty$ functions with compact support are included in $\calS$. We need an additional assumption in our analysis to ensure that $\Phinot \in \calS$.
\begin{assumption}
\label{as:generator simple}
The space $\calS$ is stable by the generator $\Lnot$ and $\Lnot$ is invertible on the space $\Pinot \calS$ composed of functions with average $0$ with respect to the invariant probability measure~$\pinot$. This means that, for any $\varphi \in \Pinot \calS$, there exists a unique solution $\psi \in \Pinot \calS$ to the Poisson equation
\begin{equation*}
  -\Lnot \psi = \varphi.
\end{equation*}
\end{assumption}
Assumption~\ref{as:generator simple} can be proved to hold for Langevin and overdamped Langevin dynamics at equilibrium under certain assumptions on the potential $V$, see~\cite{Talay02, Kopec14, Kopec15}. The generator of the perturbation should also satisfy the following condition.
\begin{assumption}
\label{as:Lrep}
The generator $\tcalL$ of the perturbation is such that $\calS$ is stable by $\tcalL$ and $\Lrep^* \bfone \in \Lpinot$.
\end{assumption}
Here and in the following we denote by $B^*$ the adjoint of a closed operator $B$ on the functional space $\Lpinot$. Assumption~\ref{as:Lrep} is easy to check, since $\Lrep$ is typically a differential operator with coefficients in $\calS$.

Let us define the following modified observable involving $\Phinot \in \Pinot \calS$, defined in~\eqref{eq:poisson problem simple}:
\begin{equation*}
\begin{aligned}
  \phi_\eta &:= R + \Leta \Phi_0. \\
\end{aligned}
\end{equation*}
The following theorem makes precise the main properties of this modified observable.
\begin{theorem}
\label{th:var order 2 eta exact}
Fix $R \in \calS$. Under Assumptions~1 to 5, $\phi_\eta \in \calS$ is well defined for any $\eta \in \bbR$, and $\Espeta[\phi_\eta] = \Espeta[R]$. Moreover, for any $\eta_*>0$, there exists $C_{R, \eta_*} > 0$ such that, for any $| \eta | \leqslant \eta_*$, the asymptotic variance satisfies
\begin{equation}
\label{eq:var order 2 eta exact}
	\sigma_{\phi_\eta, \eta}^2 = 2 \eta^2 \lang A R, -\Lnotinv A R \rangnot + \eta^3 E_{R, \eta},
\end{equation}
with $A= - \Lrep \Lnotinv \Pi_0$ and $| E_{R, \eta} | \leqslant C_{R, \eta_*}$.
\end{theorem}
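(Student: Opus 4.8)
The plan is to first collapse $\phi_\eta$ algebraically using the reference Poisson equation~\eqref{eq:poisson problem simple}. Writing $\Leta = \Lnot + \eta\Lrep$ and substituting $\Lnot\Phinot = -(R - \Espnot[R])$ gives
\begin{equation*}
	\phi_\eta = R + \Lnot\Phinot + \eta\Lrep\Phinot = \Espnot[R] + \eta\,\Lrep\Phinot .
\end{equation*}
Since $\Phinot = -\Lnotinv\Pinot R \in \Pinot\calS$ by Assumption~\ref{as:generator simple}, and $\calS$ is stable by $\Lrep$ by Assumption~\ref{as:Lrep}, we have $\Lrep\Phinot \in \calS$; hence $\phi_\eta \in \calS$ for every $\eta$, which is the first claim. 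Moreover $\Lrep\Phinot = -\Lrep\Lnotinv\Pinot R = AR$, so that $\phi_\eta = \Espnot[R] + \eta\,AR$. The identity $\Espeta[\phi_\eta] = \Espeta[R]$ then follows from $\phi_\eta = R + \Leta\Phinot$ and the invariance relation $\Espeta[\Leta\Phinot] = 0$, valid because $\Phinot \in \calS \subset \Lpieta$ lies in the domain of $\Leta$.

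The representation $\phi_\eta = \Espnot[R] + \eta\,AR$ shows $\Pieta\phi_\eta = \eta\,\Pieta(AR)$, so the variance formula~\eqref{eq:variance eta} reduces the entire statement to controlling the single scalar quantity
\begin{equation*}
	f(\eta) := \lang AR, -\Letainv\Pieta(AR) \rangeta = \lang \Pieta(AR), -\Letainv\Pieta(AR) \rangeta, \qquad \sigma_{\phi_\eta,\eta}^2 = 2\eta^2 f(\eta),
\end{equation*}
where the second equality holds because $-\Letainv\Pieta(AR)$ is $\pieta$-centred. The leading term in~\eqref{eq:var order 2 eta exact} is $2\eta^2 f(0) = 2\eta^2\lang AR, -\Lnotinv AR\rangnot$ (with the usual convention $\Lnotinv = \Lnotinv\Pinot$), so it remains to establish the Lipschitz-type bound $|f(\eta) - f(0)| \leqslant C_{R,\eta_*}|\eta|$ uniformly on $|\eta| \leqslant \eta_*$; the remainder then supplies $\eta^3 E_{R,\eta}$ with $E_{R,\eta} = 2(f(\eta) - f(0))/\eta$.

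Two auxiliary identities, both obtained by the same Poisson-equation trick, will drive this bound. First, for any $g \in \rmLinfn$, setting $\Psi = -\Lnotinv\Pinot g \in \calS$ and integrating $\Lnot\Psi = \Espnot[g] - g$ against $\pieta$ while using $\Lnot = \Leta - \eta\Lrep$, the invariance of $\pieta$, and $\Lrep\Psi = Ag$, I get the exact measure-continuity identity
\begin{equation*}
	\Espeta[g] - \Espnot[g] = \eta\,\Espeta[Ag].
\end{equation*}
Second, writing $u := AR$ and comparing $g_\eta := -\Letainv\Pieta u$ with $g_0 := -\Lnotinv\Pinot u$, a direct computation of $-\Leta(g_\eta - g_0)$ using $\Lnot g_0 = -\Pinot u$, the constant $\Pieta u - \Pinot u = \Espnot[u] - \Espeta[u]$, and the crucial relation $\Lrep g_0 = -\Lrep\Lnotinv\Pinot u = Au$ yields
\begin{equation*}
	-\Leta(g_\eta - g_0) = \eta\,\Pieta(Au), \qquad \mbox{hence} \qquad g_\eta - g_0 = -\eta\,\Letainv\Pieta(Au).
\end{equation*}
The right-hand side is genuinely $\pieta$-centred, so $\Letainv$ applies, and the uniform bound~\eqref{eq:uniform inverse generator} gives $\|g_\eta - g_0\|_{\rmLinfm} \leqslant C_{R,\eta_*}|\eta|$ for a suitable index $m$.

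With these in hand I would split $f(\eta) - f(0) = \Espeta[u\,g_\eta] - \Espnot[u\,g_0]$ into $\Espeta[u(g_\eta - g_0)]$ and $\Espeta[u\,g_0] - \Espnot[u\,g_0]$: the first is $O(\eta)$ by the resolvent estimate, the second by the measure-continuity identity applied to $g = u\,g_0 \in \calS$. All constants are made uniform in $\eta$ by combining the stability of $\calS$ under $\Lnot$ and $\Lrep$ (Assumptions~\ref{as:generator simple}--\ref{as:Lrep}) to place each iterate in a fixed space $\rmLinfm$, with products handled by the multiplicativity in Assumption~\ref{as:lyapunov in L2}, and then bounding $\Espeta[\calK_m] \leqslant \|\calK_m\|_{\Lpieta} \leqslant C_{m,\eta_*}$ via~\eqref{eq:uniform lyapunov measure}. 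The main obstacle is exactly this bookkeeping: one must track how the weighted $\rmL^\infty$ index grows under each application of $\Lnotinv$, $\Lrep$, and of products, and check that every constant produced is uniform on the compact $\eta$-range. The algebraic identities above are exact and clean, so the genuine work is the functional-analytic uniformity rather than any single deep estimate.
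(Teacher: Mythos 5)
Your proof is correct and follows essentially the same route as the paper's (Appendix~\ref{ap:proof Th2}): the reduction to $\sigma_{\phi_\eta,\eta}^2 = 2\eta^2 \lang AR, -\Letainv \Pieta (AR) \rangeta$, a first-order resolvent comparison built on the identity $\Leta \Lnotinv \Pinot = \Pieta(1-\eta A)$ (your formula for $g_\eta - g_0$ is exactly the paper's $K=1$ truncation identity for $Q^K$), and an $O(\eta)$ swap of $\pieta$-averages for $\pinot$-averages (your measure-continuity identity is the transpose of Lemma~\ref{lemma:bound expect}, proved by integrating the reference Poisson equation against $\pieta$ rather than the perturbed one against $\pinot$). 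The only slip is that $g_\eta - g_0$ is not $\pieta$-centred, so the step ``hence $g_\eta - g_0 = -\eta\,\Letainv\Pieta(Au)$'' should carry an additive constant $-\Espeta[g_0]$; since that constant equals $-\eta\,\Espeta[A g_0]$ by your own first identity, it is itself $O(\eta)$, and the bound $\| g_\eta - g_0 \|_\rmLinfm \leqslant C_{R,\eta_*} |\eta|$ and everything downstream survive unchanged.
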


The scalar products involved in the previous theorem are well defined since $\calS$ is stable by $A$, and $\calS \subset \Lpieta$ for any $\eta \in \bbR$. Equation~\eqref{eq:var order 2 eta exact} shows that the standard error $\sqrt{\frac{\sigma_{\phi_\eta, \eta}^2} T}$ committed on the empirical estimator $\widehat \varphi_T$ of $\Espeta[R]$ after a time $T$ is of leading order $\eta/\sqrt{T}$.

The scaling $\eta^2$ of the asymptotic variance formally comes from the fact that the modified observable writes $\phi_\eta = \Espeta[R] + \rmO(\eta)$. Indeed,  
\begin{equation}
  \label{eq:Leta Linv}
  \Leta \Lnotinv \Pinot = \Pieta (\Lnot + \eta \Lrep) \Lnotinv \Pinot = \Pieta + \eta \Pieta \Lrep \Lnotinv \Pinot = \Pieta (1-\eta A),
\end{equation}
so that the modified observable can be rewritten as:
\begin{equation}
\begin{aligned}
\label{eq:modified obs}
\phi_{\eta} &= R + \Leta \Phi_0 = R - \Leta \Lnotinv \Pinot R = \Esp_\eta[R] + \eta \Pieta A R.
\end{aligned}
\end{equation}
In particular,
\begin{equation}
\label{eq:projected modified obs}
\Pi_\eta \phi_\eta = \eta \Pi_\eta A R.
\end{equation}
The remainder of the proof consists in carefully estimating remainders in some truncated series expansion of $-\mathcal{L}_\eta^{-1}\Pi_\eta$; see Appendix~\ref{ap:proof Th2}.

\begin{remark}
  The formula~\eqref{eq:var order 2 eta exact} can in fact be replaced by an expansion in powers of $\eta$ with a truncation at an arbitrarily high order and a remainder controlled uniformly in $|\eta| \leqslant \eta_*$. This can be proved by an immediate generalization of the proof we provide in Appendix~\ref{ap:proof Th2}.
\end{remark}

In the following applicative sections we cannot always prove that Assumptions~\ref{as:generator eta} and~\ref{as:generator simple} hold true, but the scaling of the variance predicted by Theorem~\ref{th:var order 2 eta exact} is nevertheless numerically observed to hold. More importantly, let us emphasize that the modeling process is crucial to write the generator as the sum of a reference generator~$\mathcal{L}_0$ for which we can solve Poisson equations, and an additional term. Such a decomposition can always be performed, but the quality of the resulting control variate will strongly depend on the choice of the decomposition. Typically, the control variate method works well when the additional term is a perturbation of the reference generator (which corresponds to the regime where Theorem~\ref{th:var order 2 eta exact} can be applied). In practice, this can be checked a posteriori, once the simulation has been carried out, by comparing the asymptotic variance of the two estimators, one with and one without the control variate. Of course, one should keep the one with the smallest variance.

\subsection{Numerical resolution of the reference Poisson problem}
\label{ss:numerics poisson}

We discuss in this section a strategy to compute the solution to~\eqref{eq:poisson problem simple} when this equation cannot be analytically solved. We rely for this on a Galerkin strategy, and look for an approximation of the solution $\Phinot$ to the Poisson problem~\eqref{eq:poisson problem simple} in a subspace $V_M \subset \Lpinot$ of finite dimension~$M$. For simplicity we suppose that $V_M \subset \Pinot \Lpinot$ (which corresponds to a conformal approximation). This implies in particular that $\PhiM \in \Pinot \Lpinot$ has mean zero with respect to $\pinot$. We also assume that $\VM \subset \rmH^2(\pinot)$ to avoid regularity issues. The optimal choice for the approximation $\PhinotM$ is to minimize the variance of the modified observable $R+\Lnot \PhinotM$ for the reference dynamics:
\begin{equation}
  \label{eq:minimization_variance_Galerkin}
  \min_{\varphi \in \VM} \sigma_{R+\Lnot \varphi,0}^2 = \sigma_{R,0}^2 + \min_{\varphi \in \VM} \frac 1 2 \lang (\Lnot + \Lnot^*) \varphi, 2 \Lnotinv \Pinot R + \varphi \rangnot.
\end{equation}
The latter equality follows from Lemma~\ref{lemma:variance LU} in Appendix~\ref{ap:proof Th2}, in the particular case when $\eta=0$. In the case when $\Lnot$ is not the generator of a stochastic differential equation the quantity $\sigma_{R+\Lnot \varphi,0}^2$ cannot be interpreted as a variance but the analysis we provide here remains valid. The necessary optimality condition for a minimizer $\Phi_{0,M}$ of~\eqref{eq:minimization_variance_Galerkin} is given by the following Euler-Lagrange equation:
\begin{equation*}
	\forall \psi \in \VM, \quad \lang (\Lnot + \Lnot^*) (\PhinotM + \Lnotinv \Pinot R), \psi \rangnot = 0.
\end{equation*}
Introducing the orthogonal projector $\PiM$ on $\VM$ (with respect to the scalar product on $\Lpinot$), the latter equation can be rewritten as
\begin{equation}
\label{eq:euler lagrange numerics}
\PiM (\Lnot + \Lnot^*) (\PhinotM + \Lnotinv \Pinot R) = 0.
\end{equation}
In practice we distinguish two cases.
\begin{itemize}
\item[(i)] For reversible dynamics such as the overdamped Langevin dynamics~\eqref{eq:cv overdamped} $\Lnot = \Lnot^*$, so the equation reduces to
\begin{equation}
\label{eq:minimizer reversible}
	-\PiM \Lnot \PhinotM = \PiM R.
\end{equation}
\item[(ii)] For Langevin dynamics at equilibrium (see~\eqref{eq:Langevin_introduction}), we consider a tensorized basis involving Hermite elements as in Section~\ref{s:sinus potential} or in~\cite{Roussel17}. The symmetric part of the generator:
\begin{equation*}
	\frac 1 2 (\Lnot + \Lnot^*) = -\gamma \betainv \nabla_p^* \nabla_p,
\end{equation*}
diagonalizes the Hermite polynomials so we have the commutation rule $\PiM (\Lnot + \Lnot^*) = (\Lnot + \Lnot^*) \PiM$. Moreover the kernel of $\Lnot + \Lnot^*$ is composed of functions depending only on the position variables. The condition~\eqref{eq:euler lagrange numerics} then implies that there exists $g= g(q)$ in $\Lpinot$ such that
\begin{equation}
\label{eq:minimizer langevin}
	\PhinotM = - \PiM \Lnotinv \Pinot R + g.
\end{equation}
\end{itemize}

The solution to~\eqref{eq:minimizer reversible} coincides with the result provided by the Galerkin method on the approximation space $\VM$. For~\eqref{eq:minimizer langevin} the optimal solution, for $g=0$, is given by the Galerkin method apart from a consistency error (see~\cite{Roussel17} for a detailed analysis). This justifies the use of Galerkin methods to determine a good approximation $\PhinotM$ of $\Phinot$ in the general case. 

For the Langevin equation the operator $\calL$ is not coercive on $\tLpi$ so the associated rigidity matrix is not automatically invertible. The existence of a unique solution $\PhinotM \in V_M$ converging to $\Phinot = -\calLinv \Pinot R$ when $M \to +\infty$, as well as error estimates and a discussion on non-conformal approximations, can be found in~\cite{Roussel17}.

When a Galerkin method (or any other approximation method) is used, the error committed on $\Phinot$ induces an error on the modified observable $\phi_\eta$. The modified asymptotic variance is then the sum of terms coming from~\eqref{eq:var order 2 eta exact} (depending on $\eta$) and terms coming from the approximation error (of order $\varepsilon$) committed on $\Phinot$, as made precise in the following result.

\begin{theorem}
\label{th:var order 2 eta}
Fix $R \in \calS$ and assume that $\Phi_0$ is approximated by $\Phinoteps = \Phi_0 + \varepsilon f$ with $f \in \calS$ and $\varepsilon \geqslant 0$. Denote by $\phi_{\eta,\varepsilon} = R + \Leta \Phinoteps$ the modified observable. Under Assumptions~1 to~5, for any $\eta_*,\varepsilon_* > 0$, there exists $E_{R,\eta_*,\varepsilon_*} > 0$ such that, for any $|\eta | \leqslant \eta_*$ and $| \varepsilon | \leqslant \varepsilon_*$,
\begin{equation}
\begin{aligned}
\label{eq:variance with cv}
\sigma_{\phi_{\eta, \varepsilon}, \eta}^2 &= 2 \varepsilon^2 \lang -\Lnot f, f \rangnot 
- 2 \varepsilon \eta \lang (\Lnot + \Lnot^*) f, \Lnotinv \Pi_0 A R \rangnot \\
&\quad +2 \eta^2 \lang A R, -\Lnotinv \Pi_0 A R \rangnot
+ (\eta^3 + \varepsilon^3) C_{R, \eta, \varepsilon},
\end{aligned}
\end{equation}
with $| C_{R, \eta, \varepsilon} | \leqslant E_{R,\eta_*,\varepsilon_*}$.
\end{theorem}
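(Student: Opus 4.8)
The plan is to reduce everything to the $\varepsilon = 0$ case already settled in Theorem~\ref{th:var order 2 eta exact}, combined with an expansion of the cross and pure-$\varepsilon$ contributions to leading order in $\eta$. First I would exploit the linearity of the modified observable in $\varepsilon$: since $\Phinoteps = \Phi_0 + \varepsilon f$,
\[
\phi_{\eta,\varepsilon} = R + \Leta \Phinoteps = \left(R + \Leta \Phi_0\right) + \varepsilon \Leta f = \phi_\eta + \varepsilon \Leta f.
\]
Using~\eqref{eq:modified obs} to replace $\phi_\eta = \Espeta[R] + \eta \Pieta A R$ and noting that $\Leta f$ has zero average under $\pieta$ by invariance of the measure, the centred observable reads $\Pieta \phi_{\eta,\varepsilon} = \eta \Pieta A R + \varepsilon \Leta f$. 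Since the asymptotic variance~\eqref{eq:variance eta} depends only on this centred part, I obtain the compact starting point
\[
\sigma_{\phi_{\eta,\varepsilon},\eta}^2 = 2 \lang \eta \Pieta A R + \varepsilon \Leta f, -\Letainv\left(\eta \Pieta A R + \varepsilon \Leta f\right)\rangeta.
\]

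Next I would expand this bilinear expression into the four contributions proportional to $\eta^2$, to $\eta\varepsilon$ (two terms), and to $\varepsilon^2$. The $\eta^2$ term is exactly $\sigma_{\phi_\eta,\eta}^2$, so Theorem~\ref{th:var order 2 eta exact} directly supplies $2\eta^2 \lang A R, -\Lnotinv \Pi_0 A R\rangnot + \eta^3 E_{R,\eta}$ with a uniformly bounded remainder. For the remaining three terms the strategy is to replace $-\Letainv$, the projector $\Pieta$, the operator $\Leta$, and the scalar product $\lang\cdot,\cdot\rangeta$ by their $\eta = 0$ counterparts, controlling the discrepancy by the same truncated-series expansion of $-\Letainv \Pieta$ in powers of $\eta$ that underlies the proof of Theorem~\ref{th:var order 2 eta exact} (Appendix~\ref{ap:proof Th2}). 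Concretely, since $\Lnot f$ is mean zero one has $-\Lnotinv \Lnot f = -\Pi_0 f$, so the $\varepsilon^2$ term converges to $2\varepsilon^2\lang \Lnot f, -\Pi_0 f\rangnot = 2\varepsilon^2 \lang -\Lnot f, f\rangnot$, up to an $\rmO(\eta\varepsilon^2)$ correction. For the two cross terms, writing $w = \Lnotinv \Pi_0 A R$ and using $-\Lnotinv \Lnot f = -\Pi_0 f$ together with the adjoint relation $\lang \Lnot w, f\rangnot = \lang w, \Lnot^* f\rangnot$, the two leading contributions combine into $-2\eta\varepsilon \lang (\Lnot + \Lnot^*) f, \Lnotinv \Pi_0 A R\rangnot$, up to an $\rmO(\eta^2\varepsilon)$ correction.

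Finally I would collect the correction terms, which are of orders $\eta^3$, $\eta^2\varepsilon$ and $\eta\varepsilon^2$; each mixed monomial is dominated by $|\eta|^3 + |\varepsilon|^3$ through Young's inequality (e.g.\ $\eta^2|\varepsilon| \leqslant \tfrac23 |\eta|^3 + \tfrac13 |\varepsilon|^3$), so they can all be absorbed into a single remainder $(\eta^3+\varepsilon^3) C_{R,\eta,\varepsilon}$. The uniform bound $|C_{R,\eta,\varepsilon}| \leqslant E_{R,\eta_*,\varepsilon_*}$ for $|\eta|\leqslant\eta_*$ and $|\varepsilon|\leqslant\varepsilon_*$ follows from Assumption~\ref{as:generator eta}, namely the uniform-in-$\eta$ bounds~\eqref{eq:uniform lyapunov measure} on the weighted $\rmL^2(\pieta)$ norms and~\eqref{eq:uniform inverse generator} on $-\Letainv$, together with the stability of $\calS$ under $A$, $\Leta$ and $\Lrep$ granted by Assumptions~\ref{as:generator simple} and~\ref{as:Lrep}. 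The main obstacle is precisely this uniform control: every object in the bilinear form ($-\Letainv$, $\Pieta$, and the inner product itself) depends on $\eta$ through the unknown invariant measure $\pieta$, so the three expansions in $\eta$ must be carried out simultaneously with remainders estimated in the weighted spaces. The cross terms are the most delicate, since one must verify that their residual is genuinely of order $\eta^2\varepsilon$ rather than merely $\eta\varepsilon$, which is exactly what guarantees the clean $(\eta^3+\varepsilon^3)$ scaling of the error.
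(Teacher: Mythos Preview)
Your proposal is correct and follows essentially the same strategy as the paper. The only organizational difference is that the paper packages the bilinear expansion through Lemma~\ref{lemma:variance LU}, which expresses $\sigma_{\phi_\eta + \varepsilon\Leta f,\eta}^2 - \sigma_{\phi_\eta,\eta}^2$ directly in terms of the symmetric part $\LetaS$, whereas you expand the quadratic form $2\lang\cdot,-\Letainv\cdot\rangeta$ into its four pieces by hand; both routes then invoke the truncated expansion of $-\Letainv\Pieta$ from the proof of Theorem~\ref{th:var order 2 eta exact} (your reference to Appendix~\ref{ap:proof Th2}) together with the replacement of $\pieta$-integrals by $\pinot$-integrals (Corollary~\ref{coro:scalar product}) to isolate the leading-order terms and bound the remainders.
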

The proof of this result can be read in Appendix~\ref{ap:proof Th2}. It shows that the variance is globally of order 2 with respect to both $\eta$ and $\varepsilon$. This suggests to take $\eta$ and $\varepsilon$ of the same order.
\begin{remark}
The dependence of $\widetilde C_{R,\eta_*,\varepsilon_*}$ with respect to $R$ can be made more explicit (see for instance the discussion in~\cite{Leimkuhler16}).
\end{remark}
The error committed on $\Phinot$ can also arise from additional approximations on the right hand side of the Poisson problem, in situations when the observable $R_\eta = R_0 + \eta \widetilde R$ depends on $\eta$. A result similar to Theorem~\ref{th:var order 2 eta} can be obtained upon assuming that $\widetilde R \in \calS$, where $\Phinot$ is the solution to~\eqref{eq:poisson problem simple} with $R$ replaced by~$R_0$.

\begin{remark}
\label{rmk:variance irreversible}
Note that in the expression \eqref{eq:variance with cv} the error term $f$ only appears through $(\Lnot + \Lnot^*) f$. This term may vanish even if $f$ is not identically zero. For example, for a Langevin process at equilibrium, $(\Lnot+\Lnot^*) f$ vanishes when $f$ is a function depending only on the positions.
\end{remark}

\section{One-dimensional Langevin dynamics}
\label{s:sinus potential}

We construct in this section a control variate for a one-dimensional system by solving a simplified Poisson equation using a spectral Galerkin method. The simplification consists in neglecting a non-equilibrium perturbation, the small parameter $\eta$ being the amplitude of this perturbation. We first present in Section~\ref{ss:sinus full dynamics} the model and define the quantity of interest, namely the mobility. We next construct in Section~\ref{ss:sinus simplified} the approximate control variate and conclude in Section~\ref{ss:sinus numerics} with some numerical results.

\subsection{Full dynamics}
\label{ss:sinus full dynamics}
We consider the following Langevin process on the state space $\calX = 2 \pi \bbT \times \bbR$:
\begin{equation}
\label{eq:dynamics sin}
\left\{
\begin{aligned}
	\dd q_t &= \frac {p_t} m \, \dd t, \\
	\dd p_t &= (-v'(q_t)  + \eta ) \, \dd t - \frac \gamma m p_t \, \dd t + \sqrt{2 \gamma \betainv} \, \dd W_t,
\end{aligned}
\right.
\end{equation}
where $\gamma, m, \beta > 0$ and $v$ is a smooth $2 \pi$-periodic potential. The particle experiences a constant external driving of amplitude $\eta \in \bbR$. This force is not the gradient of a periodic function, so the system is out of equilibrium and the invariant measure is not known. We are interested in the average velocity $R(q,p)=\frac p m$ induced by the non-gradient force $\eta$, which can also be seen as a mass flux. The linear response of the average velocity with respect to the external driving is characterized by the mobility of the particle~\cite{Rodenhausen89}:
\begin{equation*}
	D = \lim_{\eta \to 0} \frac{\Espeta[R]} \eta.
\end{equation*}
The generator of~\eqref{eq:dynamics sin} is the sum of the generator associated with the Langevin dynamics at equilibrium and of a non-equilibrium perturbation:
\begin{equation*}
\begin{aligned}
	\Leta = \Lnot + \eta \Lrep,
\end{aligned}
\end{equation*}
where
\begin{equation*}
	\Lnot = -v'(q) \pap + \frac p m \paq - \frac \gamma m p \pap + \gamma \betainv \pap^2, \qquad \Lrep = \pap.
\end{equation*}
In this setting the Lyapunov functions are defined for all $n\in \bbN$ as:
\begin{equation*}
	\forall (q,p) \in \calX, \quad \calK_n(q,p) = 1 + |p|^n,
\end{equation*}
and Assumptions~\ref{as:pieta}, \ref{as:lyapunov in L2}, \ref{as:generator eta} and \ref{as:generator simple} correspond to standard results for Langevin dynamics~\cite{Talay02, ReyBellet06_chap1, Kopec15, Leimkuhler16, Lelievre16}. Assumption~\ref{as:Lrep} trivially holds: the core space $\calS$ is stable by $\Lrep = \pap$ by definition, while $\Lrep^* \bfone = (-\pap + \frac \beta m p) \bfone = \frac \beta m p \in \Lpinot$.

\subsection{Simplified dynamics and control variate}
\label{ss:sinus simplified}

Solving the Poisson problem $-\Leta \Phi_\eta = R-\Pi_\eta[R]$ associated with the nonequilibrium dynamics is not practical because $\Espeta[R]$ is not know. For this simple one-dimensional example it would still be technically doable. Since our purpose is however to illustrate both Theorems~\ref{th:var order 2 eta exact} and~\ref{th:var order 2 eta}, we do not follow this path. We therefore consider the control variate associated to a reference Poisson problem, namely
\begin{equation}
  \label{eq:simplified langevin 1D}
  -\Lnot \Phi_0 = R.
\end{equation}
Note that the average drift vanishes at equilibrium: $\Esp_0[R] = 0$. Equation~\eqref{eq:simplified langevin 1D} cannot be solved analytically, but it is possible to approach its solution by a Galerkin method as explained in Section~\ref{ss:numerics poisson}. The modified observable is
\begin{equation*}
  \phi_{\eta,M} = R + \Leta \PhinotM,
\end{equation*}
with the notation of Theorem~\ref{th:var order 2 eta} (the error committed when estimating $\Phinot$ is indexed by $M$ instead of $\varepsilon$). In practice we construct a basis $(e_m)_m$ of $V_M$ and write $\PhinotM = \sum_{m=1}^M a_m e_m$ where the coefficients $(a_m)_m \in \bbR^M$ are the solution of a linear system obtained from~\eqref{eq:euler lagrange numerics} (see~\cite{Roussel17}). The modified observable is then $\phi_{\eta,M} = R + \sum_{m=1}^M a_m \Leta e_m$ where the functions $\Leta e_m$ are explicit for appropriate choices of basis functions $(e_m)_m$; see Section~\ref{ss:sinus numerics}.

The mobility is estimated with an ergodic average of $R$ along a trajectory during a time~$T$, for a forcing $\eta$, by$\widehat D_{\eta, T} = \frac 1 \eta \widehat R_T$. This estimator has an expectation of order 1 and a large variance when $\eta$ is small and $T$ is large~\cite{Lelievre10, Lelievre16}:
\begin{equation*}
	\bbE[\widehat D_{\eta, T}] = D + \rmO \left( \eta + \frac 1 T\right), \qquad \rmVar[\widehat D_{\eta, T}] \sim \frac {\sigma_R^2} {\eta^2 T},
\end{equation*}
so that the relative statistical error scales as
\begin{equation*}
	\frac {\sqrt{\rmVar[\widehat D_{\eta, T}]}} {\bbE[\widehat D_{\eta, T}]} \sim \frac {\sigma_R} {D \eta \sqrt T}.
\end{equation*}
In order for this quantity to be small, the simulation time should be taken of order $T \sim \frac 1 {\eta^2}$, which is very large since $\eta$ is small.

When the Poisson problem is exactly solved, the modified observable is
\begin{equation*}
	\phi_{\eta} = R + \Leta \Phi_0 = \eta \tcalL \Phi_0,
\end{equation*}
which is proportional to $\eta$, so that the associated asymptotic variance scales as $\sigma_{\phi_\eta}^2 \sim \eta^2$. The relative statistical error for the mobility estimator $\widetilde D_{\eta, T} = \frac 1 \eta \widehat \phi_{\eta,T}$ is then bounded with respect to $\eta$:
\begin{equation*}
	\frac {\rmVar[\widetilde D_{\eta, T}]} {\bbE[\widetilde D_{\eta, T}]} \sim \frac 1 {D \sqrt T},
\end{equation*}
and the simulation time can be fixed independently of the value of $\eta$. Now, if an error of order $\varepsilon_M$ is committed on $\Phinot$, the asymptotic variance of $\phi_{\eta, M}$ scales as $\eta^2 + \varepsilon_M^2$ so the relative statistical error on $\widetilde D_{\eta, T}$ is of order 
\[
\frac {|\eta| + \varepsilon_M} {\sqrt{\tsimu} {|\eta|}} = \frac 1 {\sqrt{\tsimu}} \left( 1 + \frac {\varepsilon_M} {|\eta|} \right).
\]
This implies that the simulation time $T$ can be taken of order $\displaystyle{1+\left( \frac{\varepsilon_M}{\eta} \right)^2}$ instead of $\eta^{-2}$.

\subsection{Numerical results}
\label{ss:sinus numerics}

In order to simplify the numerical resolution of the Galerkin problem (see Section~\ref{ss:numerics poisson}) we consider the simple potential:
\begin{equation*}
  \forall q \in 2 \pi \bbT, \quad v(q) = 1 - \cos(q).
\end{equation*}
We construct $\VM$ using a tensorized basis made of weighted Fourier modes in position and Hermite modes in momenta. The particular weights of the Fourier modes are chosen so that the basis is orthogonal for the $\Lpinot$ scalar product. Obtaining error estimates on $\Phinot-\PhinotM$ requires some work, see~\cite[Section~4]{Roussel17} for a detailed analysis and a precise expression of the basis. In the following we take either $M=15 \times 10$ basis elements ($15$ Fourier modes and $10$ Hermite modes), $M=7 \times 5$ or $5 \times 3$ basis elements. Estimating $\Phinot$ allows to construct a control variate, and also to compute directly the mobility since the Green--Kubo formula~\cite{Kubo91} states that
\begin{equation}
  \label{eq:mobility algebra}
  D = \beta \lang R, -\Lnotinv R \rangnot = \beta \lang R, \Phi_0 \rangnot.
\end{equation}

In order to compute the mobility using Monte-Carlo simulations, we fix $\eta >0$ small and rely on the estimators $\widehat D_{\eta, T}$ or $\widetilde D_{\eta, T}$ defined in Section~\ref{ss:sinus simplified}. We are interested in the reduction of the asymptotic variance provided by our control variate, \textit{i.e.} comparing $\rmVar[\widetilde D_{\eta, T}]$ and $\rmVar[\widehat D_{\eta, T}]$. The Langevin dynamics is integrated over a time $\tsimu=2 \times 10^4$ with time steps $\Delta t = 0.02$ for an external forcing $\eta$ ranging from $0.01$ to $2.56$. The numerical integration is done with a Geometric Langevin Algorithm~\cite{BouRabee10}. This scheme ensures that the invariant probability measure is correct up to terms of order $\rmO(\Dt^2)$ at equilibrium. Moreover the transport coefficients estimated by linear response are also correct up to terms of order $\Dt^2$ (see~\cite{Leimkuhler16}). The scheme writes:
\begin{equation}
  \label{eq:gla}
  \left\{ \begin{aligned}
    p^{k+\half} &= p^k + \left(- v'(q^k)+\eta\right) \frac {\Delta t} 2, \\
    q^{k+1} &= q^k + \frac {p^{k+\half}} m \Delta t, \\
    \widetilde p^{k+1} &= p^{k+\half} + \left(- v'(q^{k+1})+\eta\right) \frac {\Delta t} 2, \\
    p^{k+1} &= \alpha_{\Dt} \widetilde p^{k+1} + \sqrt{m \betainv (1 - \alpha_{\Dt}^2)} \ G^k,
	\end{aligned} \right.
\end{equation}
where the superscript $k$ is the iteration index, $\alpha_{\Dt} = \exp(-\frac \gamma m \Delta t)$ and the $(G^k)_{k \in \bbN}$ are independent and identically distributed (i.i.d.) standard one-dimensional Gaussian random variables. The results which are reported are obtained for $m = \gamma = \beta = 1$.

\paragraph{Linear response.}

The results presented in Figure~\ref{fig:langevin mobility} (Left) show that the average velocity scales linearly with respect to the forcing for $\eta$ small, as predicted by linear response theory. The slope, which is the mobility $D$, matches the one computed using~\eqref{eq:mobility algebra} for $M$ large. An effective mobility is obtained by dividing the average velocity by the forcing, see Figure~\ref{fig:langevin mobility} (Right). We are interested in its limiting value for a small forcing. On the one hand the result is biased if $\eta$ is too large, but on the other hand the statistical error scales like $\frac 1 \eta$. For the standard observable we see that the optimal trade-off value of $\eta$ is around $0.2$ for the chosen simulation time $\tsimu$.  When using a control variate the variance is much smaller in the small forcing regime, so $\eta$ can be taken very small to reduce the bias while keeping the statistical error under control. We discuss next the estimation of the error bars plotted on Figure~\ref{fig:langevin mobility}.

\begin{figure}
\caption{Linear response for the standard MC simulation (black squares) compared to the version with control variate (blue, red) and to the asymptotic response $D \eta \approx 0.48 \eta$ (black line).}
\begin{center}
\includegraphics[scale=.58]{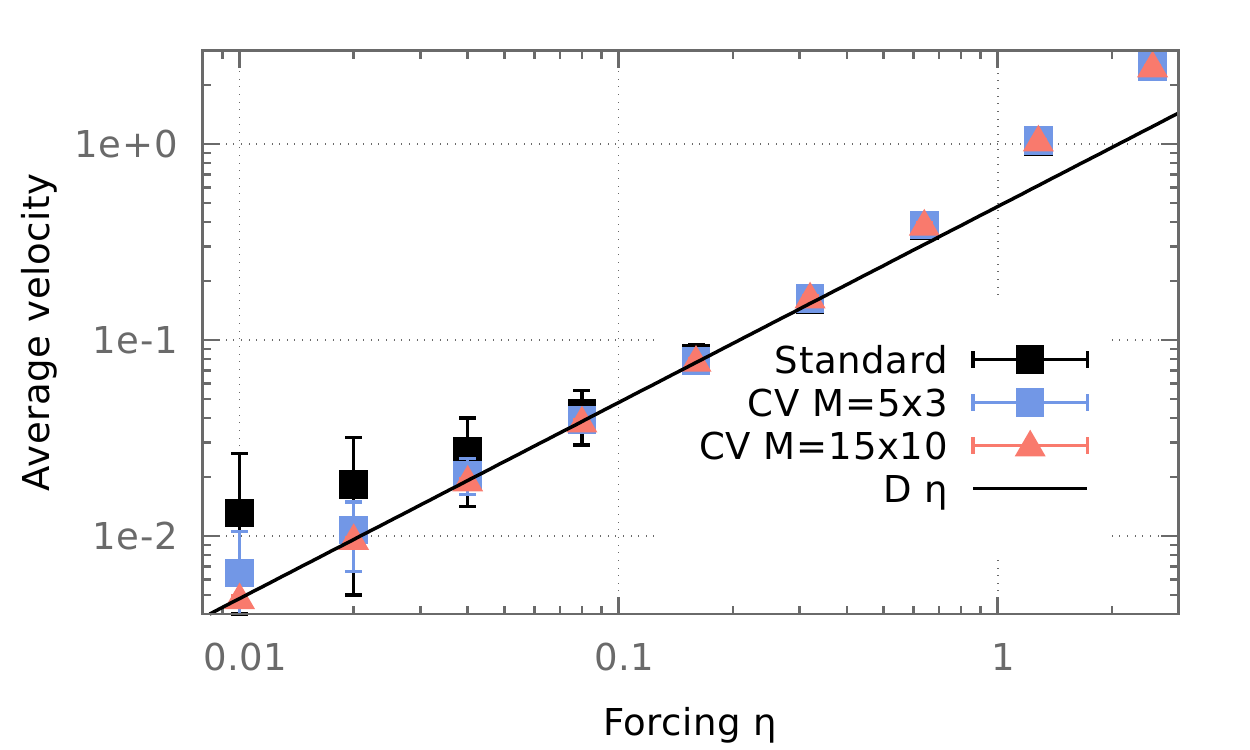}
\includegraphics[scale=.58]{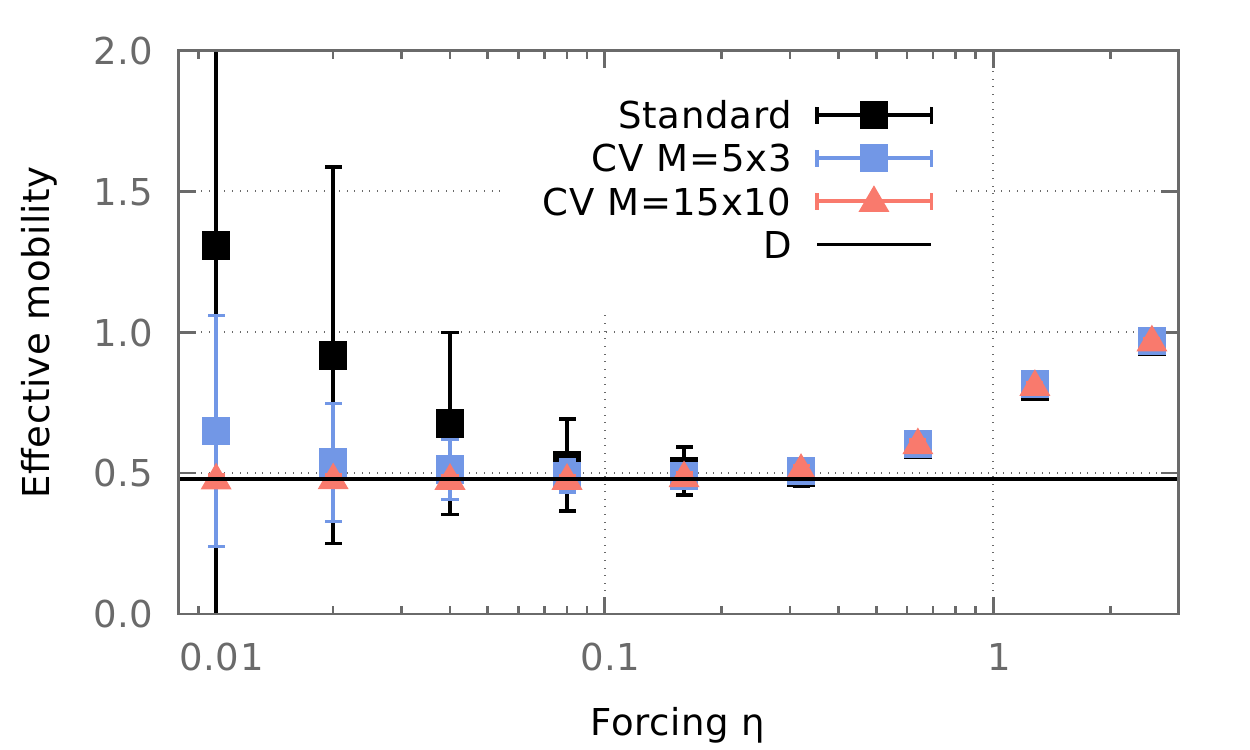}
\end{center}
\label{fig:langevin mobility}
\end{figure}

\paragraph{Correlation profiles.}

The asymptotic variance of time averages for an observable $\varphi \in \calS$ writes, using the Green--Kubo formula~\cite{Kubo91},
\begin{equation*}
	\sigma_\varphi^2 = 2 \int_0^\infty C_\varphi(t) \, \dd t, \qquad C_\varphi(t) = \Esp[(\varphi(X_0)-\Esp[\varphi]) (\varphi(X_t)-\Esp[\varphi])],
\end{equation*}
where the autocorrelation function $C_\varphi$ involves an expectation over all initial condition $X_0 = (q_0,p_0) \in \calX$ distributed according to the invariant probability measure~$\pi_0$ at equilibrium, and for all realizations of the dynamics with generator~$\mathcal{L}_0$. The integrability of $C_\varphi(t)$ can be guaranteed when the semi-group $\mathrm{e}^{t \mathcal{L}_0}$ decays sufficiently fast in $\rmLinfn$~\cite{Lelievre16}. The function $C_\varphi$ is characterized by three major features explaining the value of the asymptotic variance; see Figure~\ref{fig:correlation cartoon} for an illustration. 
\begin{enumerate}
\item[(i)] The first one is the amplitude of the signal $\| \varphi - \Esp[\varphi] \|_{\rmL^2(\pi_0)}^2 = C_\varphi(0)$ corresponding to the value of the autocorrelation at $t=0$.
\item[(ii)] The second one is the characteristic decay time $\tau$ of the autocorrelation, which can be related to the decay of its exponential envelope.
\item[(iii)] The last one is the presence of anticorrelations, which arise only for non-reversible dynamics such as Langevin dynamics.
\end{enumerate}
A proper estimation of the asymptotic variance requires to compute autocorrelation profiles on a sufficiently long time interval $[0,\tdeco]$ (here $\tdeco=6$). One can check a posteriori that this time is sufficient by looking at the convergence of the cumulated autocorrelation $t \mapsto \int_0^t C_\varphi$ toward its limit $\frac {\sigma_\varphi^2} 2 = \int_0^\infty C_\varphi$ (see Appendix~\ref{ap:variance} for more details on the variance estimators, and the computation of error bars for these quantities).

\begin{figure}
\caption{Illustrative autocorrelation profile. The dashed line is the exponential envelope of the correlation function.}
\begin{center}
\includegraphics[scale=.58]{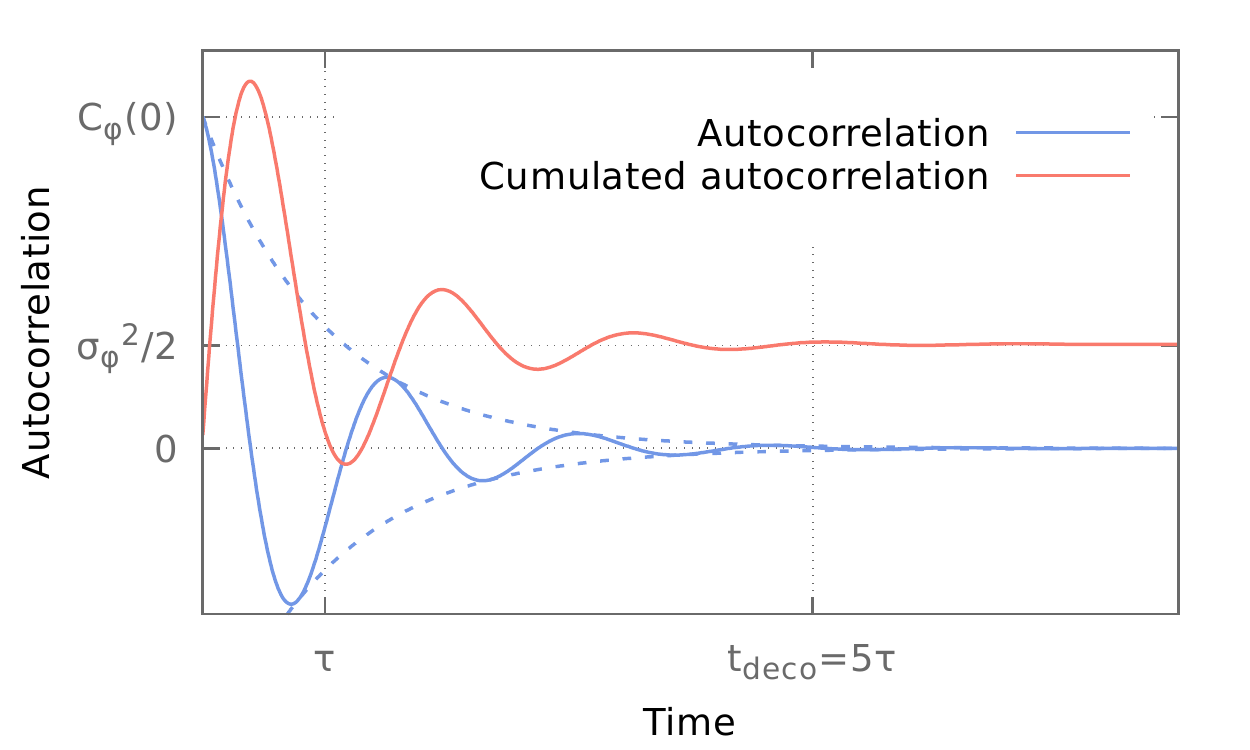}
\end{center}
\label{fig:correlation cartoon}
\end{figure}

Figure~\ref{fig:langevin autocorrelation} compares the autocorrelation profile of the velocity with the ones for the modified observables, for two different Galerkin basis sizes~$M$ and two different forcing amplitudes~$\eta$. For a small forcing $\eta=0.08$ the two modified observables have an amplitude and a decorrelation time which are both much smaller than for the standard velocity. Note that the two modified observables do not exhibit any anti-correlation, contrarily to the velocity observable. The cumulated plots show that the control variates drastically reduce the asymptotic variance in this case, especially for the one based on a more accurate Galerkin approximation. For a larger value $\eta=1.28$ the modified observables have a significantly larger amplitude (\textit{i.e.} $C_\varphi(0)$ is larger), especially in the case of a low accuracy~$M$. However the decorrelation times are small and there is anti-correlation, resulting in a reasonable variance reduction in both cases.

\begin{figure}
\caption{Left: Autocorrelation profile of the velocity compared to the one of the modified observables for two different accuracies, either for a small forcing $\eta=0.08$ (top) or a larger one $\eta=1.28$ (bottom). Right: Corresponding cumulated autocorrelations. The limit value is half the asymptotic variance of the observable.}
\begin{center}
\includegraphics[scale=.58]{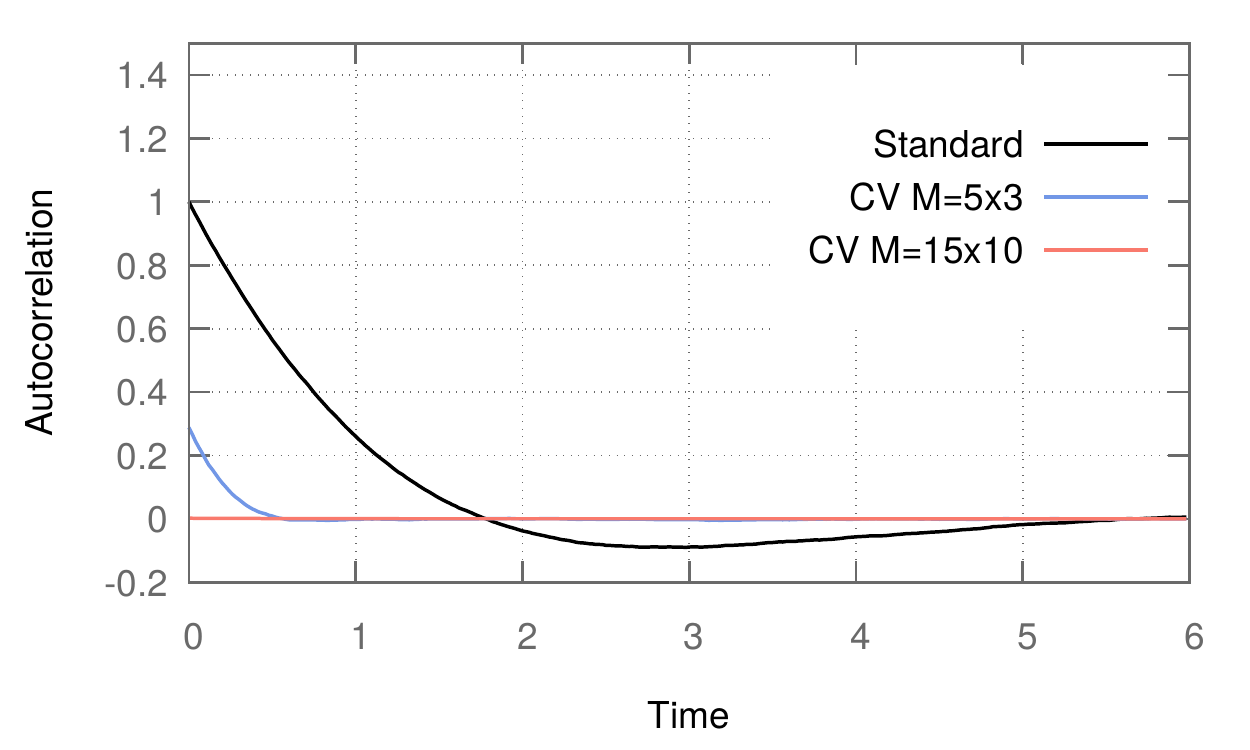}
\includegraphics[scale=.58]{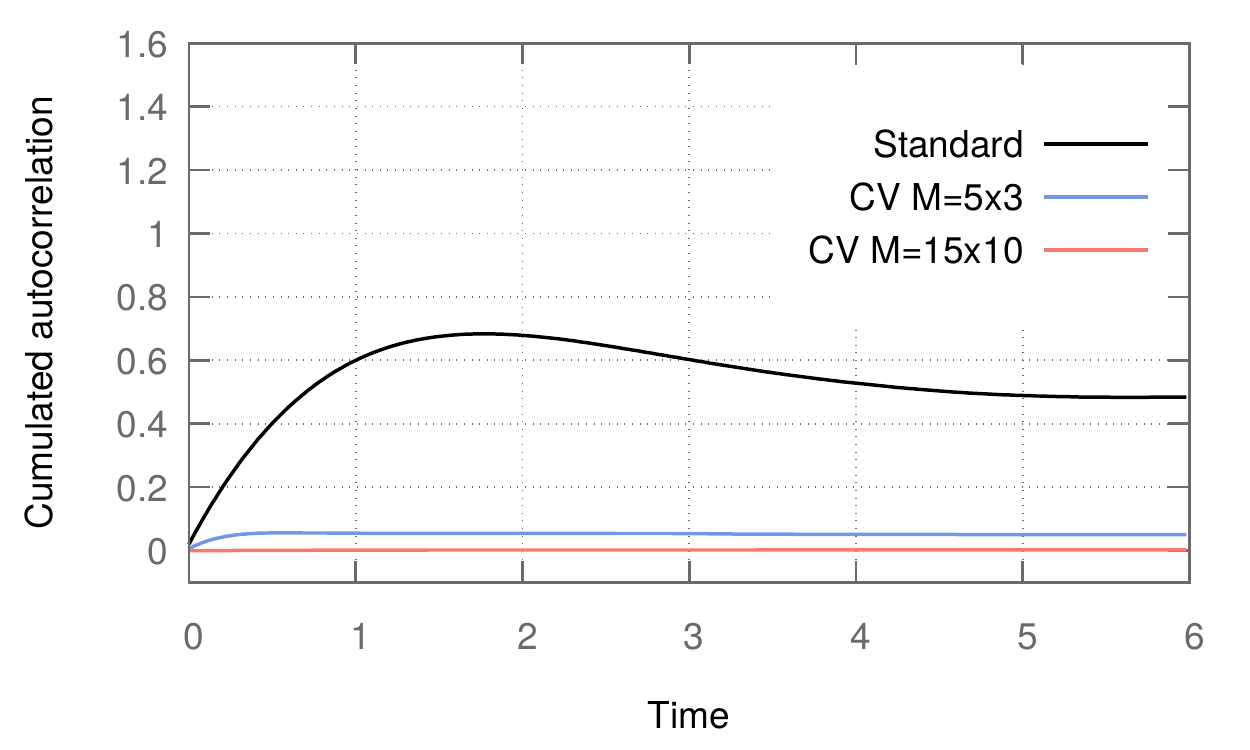}\\
\includegraphics[scale=.58]{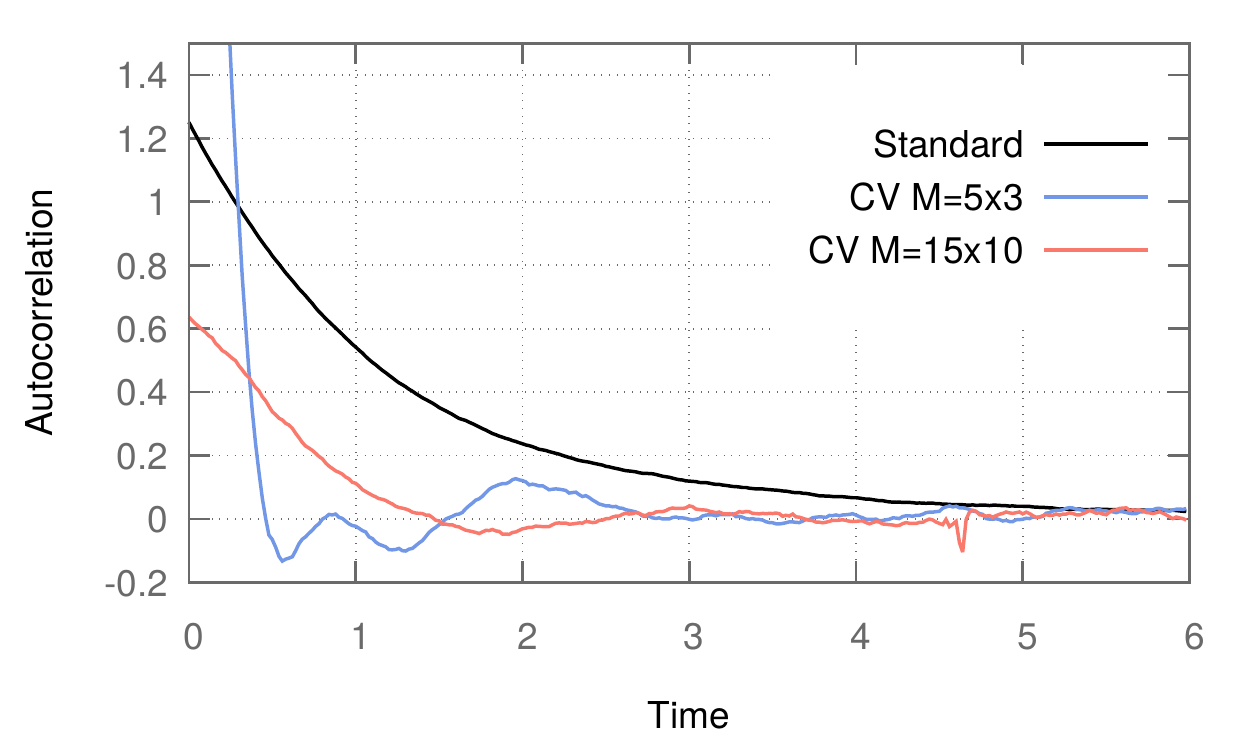}
\includegraphics[scale=.58]{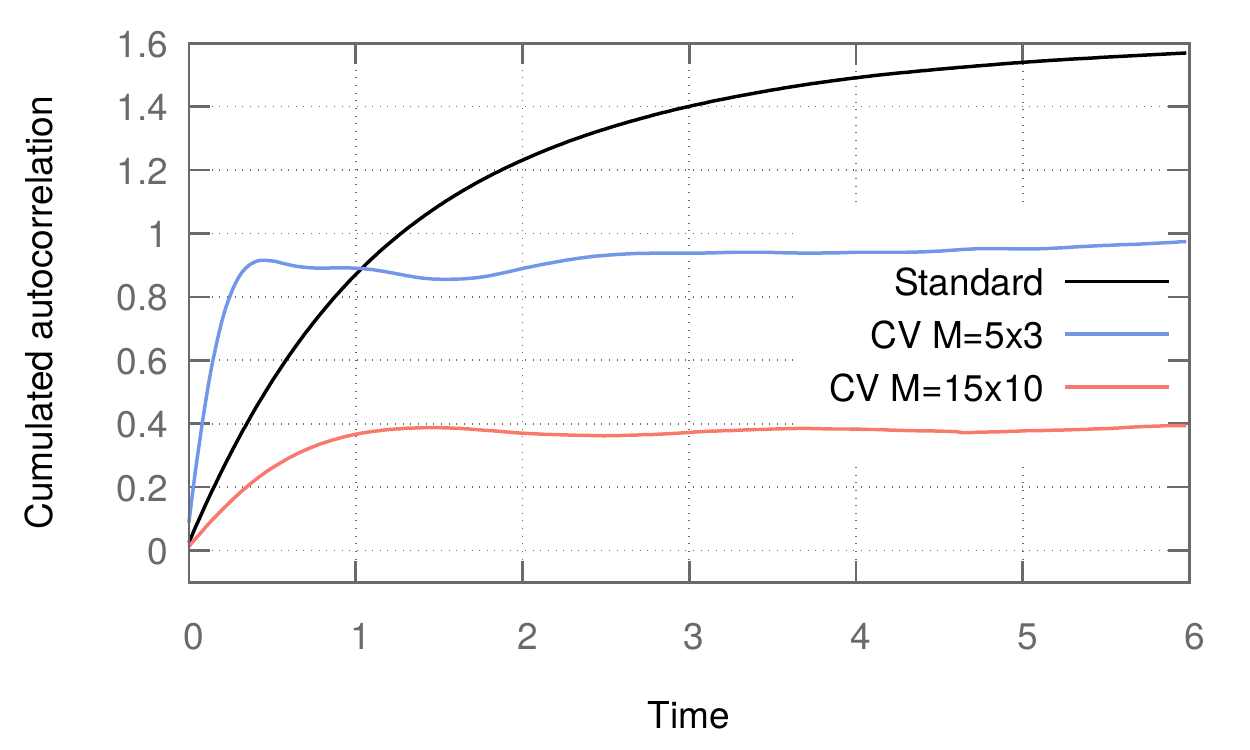}
\end{center}
\label{fig:langevin autocorrelation}
\end{figure}

\paragraph{Asymptotic variances.}

Let us now compute the asymptotic variance for a whole range of Galerkin accuracies~$M$ and forcing amplitudes $\eta$. The results presented in Figure~\ref{fig:langevin variance} confirm that for a very accurate Galerkin resolution the variance of the modified observable scales as $\eta^2$ with the prefactor $\alpha = \lang \Pi_0 A R, -\Lnotinv \Pi_0 A R \rangnot$ predicted theoretically in Theorem~\ref{th:var order 2 eta exact}. This prefactor has been computed independently by solving~\eqref{eq:simplified langevin 1D} using a Galerkin method and plugging this approximation in~\eqref{eq:mobility algebra}. When the Galerkin discretization is not sufficiently accurate, the variance reaches a plateau in the region of small forcings as predicted by Theorem~\ref{th:var order 2 eta}.

\begin{figure}
\caption{Asymptotic variance of the velocity (black squares) compared to its counterpart when using a control variate (blue, grey, red) and to the reduced variance (black line) predicted theoretically ($\alpha \approx 0.53$ computed with a Galerkin discretization).}
\begin{center}
\includegraphics[scale=.58]{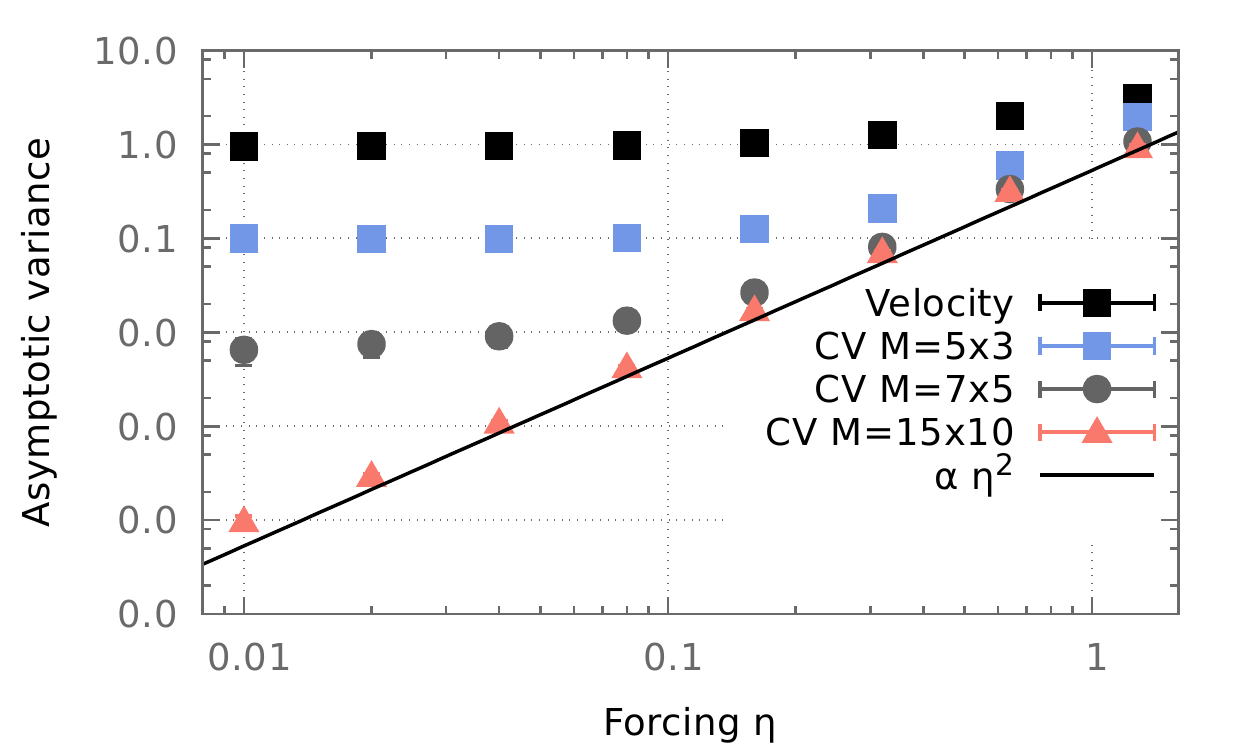}
\end{center}
\label{fig:langevin variance}
\end{figure}

\section{Thermal transport in atom chains}
\label{s:quasi harmonic}

Thermal transport in one-dimensional systems has been the topic of many investigations, both from theoretical and numerical points of view~\cite{Bonetto00, Lepri16, Dhar08}. Determining which microscopic ingredients influence the scaling of the conductivity with respect to the length of the chain is still an active line of research. Studying numerically this scaling requires to simulate chains of thousands of particles. In these systems the temperature gradient and the thermal flux are both very small, which induces large statistical errors when estimating the conductivity. Introducing variance reduction techniques not requiring the knowledge of the invariant probability measure could alleviate (at least partly) these difficulties.

\subsection{Full dynamics}

\subsubsection{Equations of motion}

We consider a chain composed of $N$ particles interacting through a nearest-neighbor potential~$v$ (see Figure~\ref{fig:thermal chain}). The evolution is dictated by a Hamiltonian dynamics and a thermalization mechanism at the boundaries, where the first and the last particles are submitted to Ornstein-Uhlenbeck processes, at temperatures $\TL$ and $\TR$ respectively. The unknowns are the momenta~$p=(p_n)_{1 \leqslant n \leqslant N}$ of the particles and the interparticle distances $r=(r_n)_{1 \leqslant n \leqslant N-1}$. In these variables, the dynamics reads:
\begin{equation}
\label{eq:dynamics chain}
\left\{ \begin{aligned}
	\dd r_n &= \frac 1 m (p_{n+1} - p_n) \, \dd t, \\
	\dd p_1 &= v'(r_1) \, \dd t - \frac \gamma m p_1 \dt + \sqrt{2 \gamma \TL} \dd W_t^L, \\
	\dd p_n &= (v'(r_n) - v'(r_{n-1})) \, \dd t, \\
	\dd p_N &= -v'(r_{N-1}) \, \dd t - \frac \gamma m p_N \dt + \sqrt{2 \gamma \TR} \dd W_t^R,
\end{aligned} \right.
\end{equation}
where $m>0$ is the mass of a particle, $\gamma>0$ is the friction coefficient, $\TL \geqslant \TR$ and $W_t^L$, $W_t^R$ are two independent standard one-dimensional Brownian motions. Notice that the ends of the chain are free. The Hamiltonian of the system is the sum of the potential and kinetic energies:
\begin{equation*}
	H(r,p) = V(r) + \sum_{n=1}^N \frac {p_n^2}{2 m}, \quad V(r) = \sum_{n=1}^{N-1} v(r_n).
\end{equation*}
The infinitesimal generator of the dynamics~\eqref{eq:dynamics chain} reads
\begin{equation*}
\begin{aligned}
	\calL &= \frac 1 m \sum_{n=1}^{N-1} (p_{n+1} - p_n) \parn + \sum_{n=1}^N (v'(r_n) - v'(r_{n-1})) \papn \\
	&\quad - \frac \gamma m p_1 \papone + \gamma T_L \papone^2 - \frac \gamma m p_N \papN + \gamma T_R \papN^2,
\end{aligned}
\end{equation*}
using the convention $v'(r_0) = v'(r_N) = 0$.
\begin{figure}
\begin{center}
\tikzsetnextfilename{scales}
\begin{tikzpicture}[scale=.4]
\input{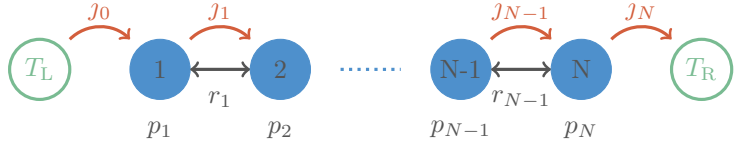}
\end{tikzpicture}
\caption{Heat transport in a one-dimensional chain.}
\label{fig:thermal chain}
\end{center}
\end{figure}

\subsubsection{Properties of the dynamics}
\label{sss:chain properties}

\newcommand{\rmLinftheta}{\rmL_\theta^\infty}
\newcommand{\rmLinfthetanot}{\rmL_{\theta,0}^\infty}

Let us recall some properties of the dynamics~\eqref{eq:dynamics chain} which hold under the following assumption.
\begin{assumption}
\label{as:potential chain}
The interaction potential $v$ is $\calC^\infty$ and there exist $k \geqslant 2$ and $a > 0$ such that
\begin{equation*}
	\forall r_1 \in \bbR, \quad \lim_{\tau \to +\infty} \tau^{-k} v(\tau r_1) = a |r_1|^k, \quad \lim_{\tau \to +\infty} \tau^{1-k} v'(\tau r_1) = k a |r_1|^{k-1} \mathrm{sign}(r_1).
\end{equation*}
Moreover the interaction potential is not degenerate: for any $r_1 \in \bbR$ there exists $m=m(r_1) \geqslant 2$ such that $\partial_m v(r_1) \neq 0$.
\end{assumption}
These conditions hold for the potentials we use in the numerical simulations reported in Section~\ref{ss:chain numerics}. When Assumption~\ref{as:potential chain} holds, the dynamics admits a unique invariant probability measure~$\pi$~(see \cite{Carmona07}). This invariant probability measure is explicit when the chain is at equilibrium ($\TL = \TR = \betainv$), in which case it has the tensorized form
\begin{equation}
\label{eq:measure chain}
	\pieq(\dd r \, \dd p) = Z_\beta^{-1} \exp(- \beta \left( \frac {|p|^2} {2 m} + V(r) \right)) \, \dd r \, \dd p,
\end{equation}
where $Z_\beta^{-1}$ is a normalization constant. Let us emphasize that the reference system considered later on in Section~\ref{ss:chain simplified} is not at equilibrium. Following the framework considered in~\cite{Carmona07} (which is itself based on~\cite{ReyBellet06_chap2}), we consider in this section the Lyapunov functions $\calK_\theta = \rme^{\theta H}$. There exist $\theta_* > 0$ such that $\calK_\theta \subset \rmL^2(\pi)$ for any $\theta \in [0,\theta_*)$. The functional spaces we use are also indexed by the continuous parameter $\theta \in [0,\theta_*)$:
\begin{equation*}
  \rmL_\theta^\infty = \left\{ \varphi \ {\rm measurable} \ \big| \ \| \varphi \, \rme^{-\theta H} \|_{\rmL^\infty} < + \infty \right\},
\end{equation*}
and the space $\calS$ is defined similarly to~\eqref{eq:core}. For $\theta \in [0,\theta_*)$, we also define the vector space $\rmLinfthetanot$ of functions of $\rmLinftheta$ with mean zero with respect to $\pi$. One can prove the exponential decay of the semi-group on the associated functional space $\rmLinfthetanot$ (see~\cite{Carmona07}): for any $\theta \in [0,\theta_*)$, there exist $C, \lambda > 0$ such that, for any $\varphi \in \rmLinfthetanot$,
\begin{equation*}
	\forall t \geqslant 0, \quad \| \rme^{t \calL} \varphi \|_{\rmL_\theta^\infty} \leqslant C \rme^{-\lambda t} \| \varphi \|_{\rmL_\theta^\infty}.
\end{equation*}
This implies that $\calL$ is invertible on $\rmLinfthetanot$, and that its inverse is bounded.

\paragraph{Validity of Assumptions 1 and 2.}
Assumption~\ref{as:pieta} holds true since there exist a unique invariant probability measure with positive density with respect to the Lebesgue measure, and the generator of the dynamics is hypoelliptic~\cite{Carmona07,Kliemann87,ReyBellet06_chap2}. The first part of Assumption~\ref{as:lyapunov in L2} is also satisfied for $\theta \in [0,\theta_*)$. Note that at equilibrium ($\TL=\TR=\betainv$) the invariant probability measure is explicit and $\theta^* = \beta/2$. The product of two Lyapunov functions $\calK_\theta$ and $\calK_{\theta'}$ is in a Lyapunov space only if $\theta + \theta' < \theta_*$, so the second part of Assumption~\ref{as:lyapunov in L2} is not satisfied.

\subsubsection{Heat flux and conductivity}

When studying heat transport in atom chains the typical quantity of interest is the thermal flux through the chain:
\begin{equation}
\label{eq:heat flux}
	\forall n \in [1, N-1], \quad j_n(r,p) = -\frac {p_n + p_{n+1}} 2  v'(r_n),
\end{equation}
see for example the review~\cite{Lepri03} on thermal transport in low-dimensional lattices for further background material. We also make use of the two boundary elementary fluxes:
\begin{equation}
\label{eq:heat flux boundaries}
	j_0(r,p) = \frac \gamma m \left(\TL - \frac {p_1^2} m\right), \qquad j_N(r,p) = \frac \gamma m \left( \frac {p_N^2} m - \TR \right).
\end{equation}
The definition of the elementary fluxes $j_n$ is motivated by the local energy balance, centered on particle~$n$:
\begin{equation}
\label{eq:energy balance}
	\forall n \in [1, N], \quad \calL \varepsilon_n = j_{n-1} - j_n, \qquad \varepsilon_n(r,p) = \frac {v(r_{n-1})} 2 + \frac {p_n^2}{2 m} + \frac {v(r_n)} 2.
\end{equation} 
The quantity $\calL \varepsilon_n$ is of mean zero since it is in the image of the generator. Therefore the elementary fluxes $j_n$ all have the same stationary values:
\begin{equation}
\label{eq:equality fluxes}
	\Esp_\pi[j_0] = \Esp_\pi[j_1] = \cdots = \Esp_\pi[j_N].
\end{equation}
Any linear combinations of such fluxes, namely
\begin{equation}
\label{eq:linear combination flux}
	J_\lambda = \sum_{n=0}^N \lambda_n j_n, \qquad \sum_{n=0}^N \lambda_n = 1,
\end{equation}
has the same stationary value. The most common choice is the spatial mean
\begin{equation}
\label{eq:heat flux sum}
	\widetilde R = \frac 1 {N-1} \sum_{n=1}^{N-1} j_n.
\end{equation}
We call the latter observable "standard heat flux" in the sequel. Notice that it does not depend on the boundary fluxes $j_0$ and $j_N$. The linear response of $\widetilde R$ (or of any flux $J_\lambda$) with respect to the temperature gradient $\frac {\TL-\TR}{N-1}$ defines the effective conductivity:
\begin{equation}
\label{eq:def conductivity}
	\kappa = \frac{N-1}{\TL-\TR}\Esp_\pi[\widetilde R],
\end{equation}
which is here the transport coefficient of interest. There exist infinitely many observables having the same expectation as $\widetilde R$, see \eqref{eq:linear combination flux} for example. Let us first discuss the choice of observable for the heat flux, before trying to reduce its variance by constructing a control variate.

\paragraph{Asymptotic variance of the heat fluxes at equilibrium.}

The chain is supposed to be at equilibrium in all this paragraph ($\TL=\TR=\betainv$). The conclusions remain unchanged for nonequilibrium systems when $\TL-\TR$ is small since the results are only perturbed to first order with respect to this quantity. In the remainder of Section~\ref{s:quasi harmonic}, an index 'eq' refers to the equilibrium dynamics and to the equilibrium invariant probability measure~$\pieq$. In this setting the asymptotic variance $\sigma_{\widetilde R, {\rm eq}}^2$ for the standard heat flux $\widetilde R$ is not smaller than the one associated with any elementary flux $(j_n)_{1\leqslant n \leqslant N-1}$. These two variances are in fact equal, as made precise in the following proposition (similar in spirit to Remark~\ref{rmk:variance irreversible}).

\begin{prop}
\label{prop:variance invariance}
Consider an observable $\varphi \in \calS$ and a function $U \in \calS$ which does not depend on $p_1$ and $p_N$. Then adding $\calL U$ to the observable does not modify the variance:
\begin{equation}
  \label{eq:equality_variances}
  \sigma_{\varphi + \calL U, {\rm eq}}^2 = \sigma_{\varphi, {\rm eq}}^2.
\end{equation}
\end{prop}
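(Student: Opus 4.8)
The plan is to show that, at equilibrium, replacing $\varphi$ by $\varphi + \calL U$ affects the asymptotic variance only through the \emph{symmetric} part of the generator applied to $U$, and then to verify that this symmetric part annihilates every $U$ that is independent of $p_1$ and $p_N$. Starting from \eqref{eq:variance eta}, write $\sigma_{\psi,\rm eq}^2 = 2\lang \psi, -\calLinv \Pi \psi \rangeq$, where $\Pi$ denotes the projection onto functions of $\pieq$-mean zero. Since $\calL U$ is already centered, $\Pi \calL U = \calL U$ and $\calLinv \calL U = \Pi U$, so that $-\calLinv \Pi(\varphi + \calL U) = -\calLinv \Pi \varphi - \Pi U$. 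Expanding the bilinear form and subtracting $\sigma_{\varphi,\rm eq}^2$, and using the self-adjointness of $\calL + \calL^*$ to transfer it onto $U$, the difference $\sigma_{\varphi + \calL U,\rm eq}^2 - \sigma_{\varphi,\rm eq}^2$ becomes a combination of the two terms $\lang (\calL+\calL^*)U, \calLinv \Pi \varphi \rangeq$ and $\lang (\calL+\calL^*)U, U \rangeq$. This is precisely the identity of Lemma~\ref{lemma:variance LU}, whose $\eta=0$ instance already appears in \eqref{eq:minimization_variance_Galerkin}; the essential observation is that $U$ enters only through $(\calL+\calL^*)U$, the antisymmetric (Hamiltonian) contributions cancelling in pairs.

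The second step is to compute $\calL + \calL^*$ explicitly. With $\TL = \TR = \betainv$ the invariant measure is the tensorized Gibbs measure \eqref{eq:measure chain}, and the generator splits as $\calL = \Lham + \Lthm$ with
\[
\Lham = \frac 1 m \sum_{n=1}^{N-1} (p_{n+1}-p_n)\parn + \sum_{n=1}^N (v'(r_n)-v'(r_{n-1}))\papn, \qquad \Lthm = \sum_{i \in \{1,N\}} \left( -\frac\gamma m p_i \partial_{p_i} + \gamma\betainv \partial_{p_i}^2 \right).
\]
The Hamiltonian part is the Liouville operator: it annihilates $\rme^{-\beta H}$ and is divergence free, so $\Lham^* = -\Lham$ on $\Lpieq$. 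Each boundary term of $\Lthm$ is an Ornstein--Uhlenbeck generator, self-adjoint for the Gaussian marginal $\rme^{-\beta p_i^2/(2m)}$, so $\Lthm^* = \Lthm$. Consequently $\calL + \calL^* = 2\Lthm$.

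The conclusion is then immediate: every term of $\Lthm$ carries a derivative in $p_1$ or $p_N$, so a function $U \in \calS$ independent of $p_1$ and $p_N$ satisfies $(\calL+\calL^*)U = 2\Lthm U = 0$, and both correction terms isolated in the first step vanish, giving \eqref{eq:equality_variances}. The delicate point is the second step, where the explicit equilibrium measure is needed to guarantee that the symmetric part of $\calL$ reduces exactly to the two boundary thermostats; this is where the hypothesis $\TL = \TR$ is used, since out of equilibrium the invariant measure is unknown and no such clean self-adjoint/antisymmetric splitting is available. It also explains the shape of the admissible $U$: because the dissipation acts only on $p_1$ and $p_N$, insensitivity to these two momenta suffices, a strictly weaker requirement than the position-only condition needed for the fully thermostatted Langevin dynamics of Remark~\ref{rmk:variance irreversible}.
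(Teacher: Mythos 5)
Your proof is correct and follows essentially the same route as the paper's: invoke Lemma~\ref{lemma:variance LU} (whose content you re-derive inline) so that the variance difference depends on $U$ only through $(\calL+\calL^*)U$, identify the symmetric part of the generator at equilibrium with the boundary fluctuation--dissipation operator acting only on $p_1$ and $p_N$, and conclude that it annihilates any $U$ independent of these momenta. The only difference is cosmetic: you verify the splitting $\calL+\calL^*=2\Lthm$ by checking $\Lham^*=-\Lham$ and the self-adjointness of the Ornstein--Uhlenbeck thermostats explicitly, whereas the paper simply quotes the formula~\eqref{eq:def_LFD}.
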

\begin{proof}
At equilibrium, the invariant probability measure $\pieq$ is explicit. The symmetric part of the generator can then be computed and it corresponds to the fluctuation-dissipation part of the process:
\begin{equation}
  \label{eq:def_LFD}
  \frac 1 2 (\calL + \calL^*) = \LFD := - \frac{\gamma}{\beta} \left(  \papone^* \papone + \papN^* \papN \right),
\end{equation}
where adjoints are considered on $\Lpieq$. When $U$ does not depend on $p_1$ nor $p_N$, it holds $\LFD U = 0$. The claimed result then follows from Lemma~\ref{lemma:variance LU}.
\end{proof}

The equality~\eqref{eq:equality_variances} is perturbed by terms of order $\TL-\TR$ for out of equilibrium dynamics according to linear response theory. Upon taking for $U$ a linear combination of the energies $(\varepsilon_n)_{2 \leqslant n \leqslant N-1}$, we directly obtain, thanks to~\eqref{eq:energy balance}, that all the fluxes of the form~\eqref{eq:linear combination flux} which do not depend on the boundary fluxes ($\lambda_0=\lambda_N=0$) share the same asymptotic variance at equilibrium; in particular
\begin{equation}
  \label{eq:flux same asymptotic variance}
  \forall 1 \leqslant n \leqslant N-1, \quad \sigma_{j_n, {\rm eq}}^2 = \sigma_{\widetilde R, {\rm eq}}^2.
\end{equation}

\begin{remark}
\label{rmk:flux eq}
Linear response theory indicates that the previous asymptotic variances are related to the conductivity through the Green--Kubo formula~\cite{Kubo91}:
\begin{equation}
  \label{eq:variance and conductivity}
  \sigma_{\widetilde R, {\rm eq}}^2 = \frac {2 \kappa}{\beta^2 (N-1)}.
\end{equation}
We show in Appendix~\ref{ap:proof variance end} that, in this equilibrium situation, the variance of the two boundary fluxes $j_0$ and $j_N$ is also related to the conductivity as:
\begin{equation}
  \label{eq:variance boundary}
  \sigma_{j_0, {\rm eq}}^2 = \frac \gamma {m \beta^2} - \frac {2 \kappa}{\beta^2 (N-1)}, \qquad
  \sigma_{j_N, {\rm eq}}^2 = \frac \gamma {m \beta^2} - \frac {2 \kappa}{\beta^2 (N-1)}.
\end{equation}
Apart from special cases such as integrable systems (as the harmonic system considered in Appendix~\ref{ap:harmonic}), we generically observe numerically that $\frac {\kappa(N)} {N-1} \xrightarrow[N \to \infty]{} 0$. The boundary fluxes therefore have (asymptotically in~$N$) a larger variance than bulk fluxes. Since the variances are perturbed to first order in $\TL-\TR$ in nonequilibrium situations, the same conclusion holds for temperature differences which are not too large.
\end{remark}

In the next section we construct a modified observable by adding a control variate to the reference observable $R = \frac 1 2 (j_0 + j_N)$. This particular heat flux does not depend on the potential energy function~$v$, which simplifies the computation of the control variate (see Appendix~\ref{ap:harmonic}). In Appendix~\ref{ap:justification flux} we prove using Proposition~\ref{prop:variance invariance} that, at equilibrium, the asymptotic variance of the resulting modified observable does not depend on the choice of the reference observable~$R$.

\subsection{Simplified dynamics and control variate}
\label{ss:chain simplified}

We split the interaction potential into a harmonic part with parameters $\omegahar > 0$ and $\rhar \in \bbR$, and an anharmonic part~$w$:
\begin{equation}
  \label{eq:potential decomposition}
  v(r_1) = v_0(r_1) + w(r_1), \qquad v_0(r_1) = \frac 1 2 m \omegahar^2 (r_1-\rhar)^2.
\end{equation}
The potential energy is then decomposed as
\begin{equation*}
  V(r) = V_0(r) + W(r), \qquad V_0(r) = \sum_{n=1}^{N-1} v_0(r_n), \qquad W(r) = \sum_{n=1}^{N-1} w(r_n).
\end{equation*}
Following the general strategy outlined in Section~\ref{s:strategy} we decompose the generator as
\begin{equation*}
	\calL = \Lnot + \tcalL,
\end{equation*}
where $\Lnot$ is the generator of the harmonic chain corresponding to the harmonic interaction potential~$v_0$ and $\tcalL$ is the generator of the anharmonic perturbation:
\begin{equation*}
	\tcalL = \sum_{n=1}^N \left( w'(r_n) - w'(r_{n-1}) \right) \papn,
\end{equation*}
with the same convention $w'(r_0) = w'(r_N) = 0$ as for the potential $v$. We simplify the Poisson problem for the optimal control variate
\begin{equation*}
	-\calL \Phi = R - \Esp[R],
\end{equation*}
into the harmonic Poisson problem
\begin{equation}
\label{eq:simplified poisson chain}
	-\Lnot \Phinot = R - \Espnot[R].
\end{equation}
Note that the observable $R$ does not depend on the potential, contrarily to other heat fluxes such as $\widetilde R$ in~\eqref{eq:heat flux sum}, so that the right hand side of the Poisson equation needs not be changed when looking for an approximate control variate. Equation~\eqref{eq:simplified poisson chain} can be solved analytically for $\Phinot$. In Appendix~\ref{ap:harmonic} we show that
\begin{equation}
\begin{aligned}
\label{eq:cv chain}
	\Espnot[R] &= \frac {\nu^2} {1 + \nu^2} \frac {\gamma (\TL-\TR)} {2 m}, \\
	\Phinot(r,p) &= \frac m {2 \gamma \left( 1 + \nu^2 \right)} \left[-\omegahar^2 \sum_{n=1}^{N-1} (r_n-\rhar) (p_n + p_{n+1}) + \frac \gamma {2m^2} \left( p_N^2 - p_1^2 \right) \right] + C,
\end{aligned}
\end{equation}
where $\nu = \frac {m \omegahar} \gamma$ and $C \in \bbR$ is such that $\Espnot[\Phinot] = 0$. The modified observable is therefore
\begin{equation}
\label{eq:modified obs chain}
\begin{aligned}
	&\quad (R+\calL \Phinot)(r,p) = \Espnot[R] + \tcalL \Phinot(r,p) \\
		&= \frac 1 {2 ( 1 + \nu^2 )} \left[ \nu \omegahar (\TL-\TR) - \nu \omegahar \sum_{n=1}^{N-1} (r_n-\rhar) \left( w'(r_{n+1}) - w'(r_{n-1}) \right)
		 - \left( \frac {p_1} m w'(r_1) + \frac {p_N} m w'(r_{N-1}) \right) \right] \\
		&= \frac 1 {2(1+\nu^2)} \left[ \nu \omegahar (\TL-\TR) - \sum_{n=1}^{N-1} \left( \widetilde v_{n+1}(r,p) - \widetilde v_{n-1}(r,p) \right) w'(r_n) \right],
\end{aligned}
\end{equation}
where
\begin{equation*}
\widetilde v_n(r,p) = \left\{
\begin{array}{cl}
  -\frac {p_1} m \quad &\mbox {if} \quad n=0, \\
  -\nu \omegahar (r_n-\rhar) \quad &\mbox {if} \quad 1 \leqslant n \leqslant N-1, \\
  \frac {p_N} m \quad &\mbox {if} \quad  n=N.
\end{array}
\right.
\end{equation*}
Notice that, by construction, this observable is constant when the chain is harmonic (\emph{i.e.} $w=0$).

\paragraph{Harmonic fitting.}

For a given pair potential $v=v(r)$, there is some freedom in the decomposition~\eqref{eq:potential decomposition}, namely the choice of the parameters $\omegahar$ and $\rhar$. The optimal choice would be such that the variance of the modified observable~\eqref{eq:modified obs chain} is minimal, but this condition is not practical. A possible (and simpler) heuristic is to choose these coefficients in order to minimize the $\rmL^2(\pi_{\rm eq})$ norm of the anharmonic force $-\nabla W$ at equilibrium, namely when $\TL=\TR=\betainv$. In view of the tensorized form~\eqref{eq:measure chain} of the invariant probability measure at equilibrium,
\begin{equation*}
  \| \nabla W(r) \|_{\rm eq}^2 = (N-1) z_\beta^{-1} \int_\bbR \left( v'(r_1) - m \omegahar^2 (r_1-\rhar) \right)^2\, \rme^{- \beta v(r_1)} \dd r_1,
\end{equation*}
where $z_\beta = \int_\bbR \rme^{- \beta v(r_1)} \dd r_1$. Therefore the minimization problem defining $\rhar$ and $\hat \Omega = m \omegahar^2$ writes
\begin{equation}
\label{eq:minimization}
	\argmin_{\omegahar, \rhar} \int_\bbR \left( v'(r_1) - \hat \Omega (r_1-\rhar) \right)^2\, \rme^{- \beta v(r_1)} \dd r_1.
\end{equation}
There exists a minimizer $(\rhar, \hat \Omega)$ since the function to be minimized is continuous and coercive; uniqueness is proved in Appendix~\ref{ap:proof minimizer fpu}. Define the moments of the marginal measure for inter-particle distances as:
\begin{equation*}
	\calM_k = \int_\bbR r_1^k \rme^{- \beta v(r_1)} \dd r_1.
\end{equation*}
The Euler-Lagrange equation associated with~\eqref{eq:minimization} provides the expression of the minimizer (see Appendix~\ref{ap:proof minimizer fpu}):
\begin{equation}
  \label{eq:rhar omegahar values}
  \rhar = \frac{\calM_1}{\calM_0}, \qquad \hat \Omega = m \omegahar^2 = \betainv \frac{\calM_0^2}{\calM_0 \calM_2 - \calM_1^2},
\end{equation}
where, by the Cauchy-Schwarz inequality, $\calM_0 \calM_2 - \calM_1^2 > 0$ for any continuous potential $v$ which is not constant.

\paragraph{Validity of Assumptions 3 to 5.}

The standard way to verify Assumption~\ref{as:generator eta} would be to show that the  coefficients in the Lyapunov condition exhibited in~\cite{Carmona07} depend continuously on perturbations of the potential $v$. This is not straightforward, especially if the exponent~$k$ introduced in Assumption~\ref{as:potential chain} is discontinuous with the perturbation amplitude at $\eta=0$ (which corresponds to the harmonic chain). For example, for the FPU potential~\eqref{eq:fpu}, $k=2$ for the harmonic chain ($\eta=0$) and $k=4$ for $\eta >0$.  We show that Assumption~\ref{as:generator simple} holds under Assumption~\ref{as:potential chain} in Appendix~\ref{ap:ass 4}. Moreover it is clear that $\calS$ is stable by $\Lrep$. A simple computation shows that $\pinot$ is a Gaussian probability measure, which implies that $\Lrep^* \bfone \in \Lpinot$. Therefore Assumption~\ref{as:Lrep} holds as well.

\subsection{Numerical results}
\label{ss:chain numerics}

We consider a Fermi-Pasta-Ulam (FPU) potential
\begin{equation}
  \label{eq:fpu}
  v(r_1) = \frac a 2 r_1^2 + \frac b 3 r_1^3 + \frac c 4 r_1^4,
\end{equation}
where $c=\frac {b^2}{3 a}$ is such that $v''(r_1) = a + 2 b r_1 + 3 c r_1^2 = \frac 1 a (a + br_1)^2$ is positive except at a single point where it vanishes. This choice makes the potential both asymmetric and convex. Symmetric potentials indeed exhibit special behaviors~\cite{Spohn14}, whereas we want to be as general as possible. On the other hand, non-convex potentials are not typical in the literature on the computation of thermal conduction in one-dimensional chains. In the following we fix $a=1$ and vary $b$ only. The parameters $\rhar$ and $\omegahar$ are given by~\eqref{eq:rhar omegahar values}, where the moments $\calM_k$ are computed using one-dimensional numerical quadratures.

The system is simulated for a time $\tsimu=10^8$ with time steps $\Delta t = 10^{-2}$, and with $m=\gamma=1$. The atom chain is simulated either at equilibrium with $\TL=\TR=2$, or for $\TL=3$ and $\TR=1$. The dynamics is discretized using a Geometric Langevin Algorithm scheme as in~\eqref{eq:gla}. The estimator of the asymptotic variance is made precise in Appendix~\ref{ap:variance}. The decorrelation time is set to $\tdeco = 3 N$ for the standard flux $R$ and to $\tdeco = 32$ for the modified observable.

We plot in Figure~\ref{fig:autocorrelation chain} the autocorrelation profiles of the heat flux for a chain of size $N=128$ at equilibrium, for two different anharmonicities. Let us comment this picture in greater detail. The results first show that the chosen decorrelation time $\tdeco$ is sufficiently large. Similar plots were used to check this is also the case for all the range of anharmonicities $b$ and numbers of particles $N$ we consider. They can also be used to understand the eventual variance reduction granted by the control variate~\eqref{eq:cv chain}. For a small anharmonicity $b=0.08$ we see that the asymptotic variance, which is twice the limit of the cumulated autocorrelation (right plot) is greatly reduced for two reasons. First, the signal amplitude, which is related to the autocorrelation value at $t=0$ (left plot), is slightly smaller, and (right plot) the contribution of the times $0\leqslant t \leqslant 2$ is twice smaller for the modified flux. Second, and that is the actual reason for the variance reduction, there is anticorrelation for $2 \leqslant t \leqslant 5$. For a larger anharmonicity it appears that the amplitude of the modified observable is much larger, but this is compensated by a long-time anticorrelation. The resulting asymptotic variance of the modified flux is slightly smaller than the one of the standard flux. The plots are essentially the same out of equilibrium, when $\TL = 3$ and $\TR=1$ (numerical results not presented here).

\begin{figure}
\begin{center}
\includegraphics[scale=0.5]{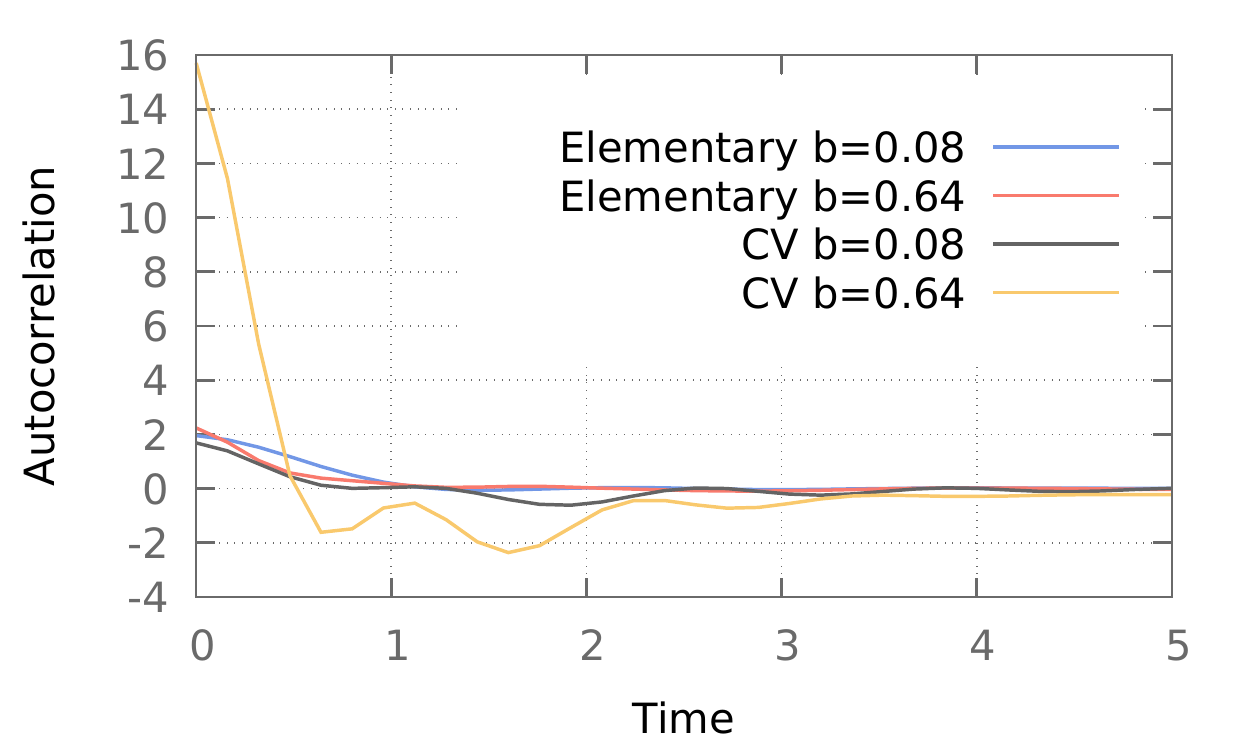}
\includegraphics[scale=0.5]{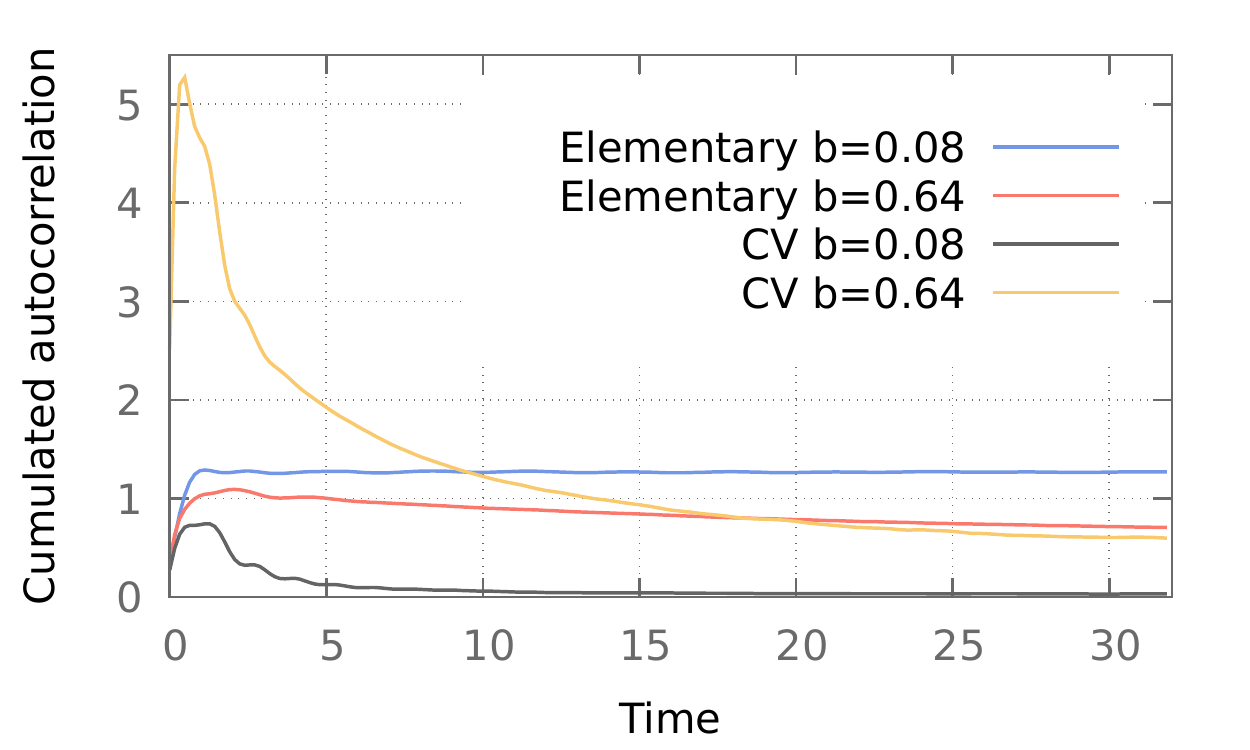}
\caption{Comparison of results obtained either with the elementary flux $j_n$~\eqref{eq:heat flux} or with the modified observable~\eqref{eq:modified obs chain}, for two different anharmonicities $b=0.08, 0.64$, and for a chain of $N=128$ particles at equilibrium. Left: Autocorrelation profile. Right: Cumulated autocorrelation $t \mapsto \int_0^t C$ at longer times.}
\label{fig:autocorrelation chain}
\end{center}
\end{figure}

The asymptotic variances, with associated error bars (see Appendix~\ref{ap:variance}), are plotted on Figure~\ref{fig:variance chain} for a whole range of anharmonicities $b$ and numbers of particles $N$. The two left plots are at equilibrium ($\TL=\TR = 2$) while the two right plots are out of equilibrium ($\TL=3$ and $\TR = 1$). We check that the variances are extremely similar in both cases, which is expected by linear response theory. We observe that the asymptotic variance of the modified flux scales as $b^2$ for $b \ll 1$, as expected from Theorem~\ref{th:var order 2 eta exact}, providing an excellent variance reduction in this case. Note that, in the limit $b \to 0$, the variance of the standard flux tends to $\frac {\gamma \nu^2}{m \beta^2(1+\nu^2)}=2$ (since $\nu=\frac{m \omega}{\gamma}=1$ here), which is the theoretical value predicted at equilibrium in view of the expression of the mean flux for a harmonic chain (see Equations~\eqref{eq:cv chain},~\eqref{eq:variance and conductivity}, and~\eqref{eq:def conductivity}). The modified flux can sometimes have a larger variance than the standard one, for example in the regime $b=\rmO(1)$ and $N$ large. Note that for the particular choice $\omegahar=0$ the modified flux is $\frac 1 2 (j_1 + j_{N-1})$, which has the same asymptotic variance as the standard flux~$\widetilde R$ according to~\eqref{eq:flux same asymptotic variance}. Therefore, for any set of parameters, there exist an optimal choice of the coefficients $\omegahar, \rhar$ providing a modified flux whose asymptotic variance is smaller or equal to its counterpart without control variate. In the present application these two coefficients are instead chosen according to the heuristic~\eqref{eq:rhar omegahar values}, leading to a degradation of the asymptotic variance in certain cases.

\begin{figure}
\begin{center}
\includegraphics[scale=0.5]{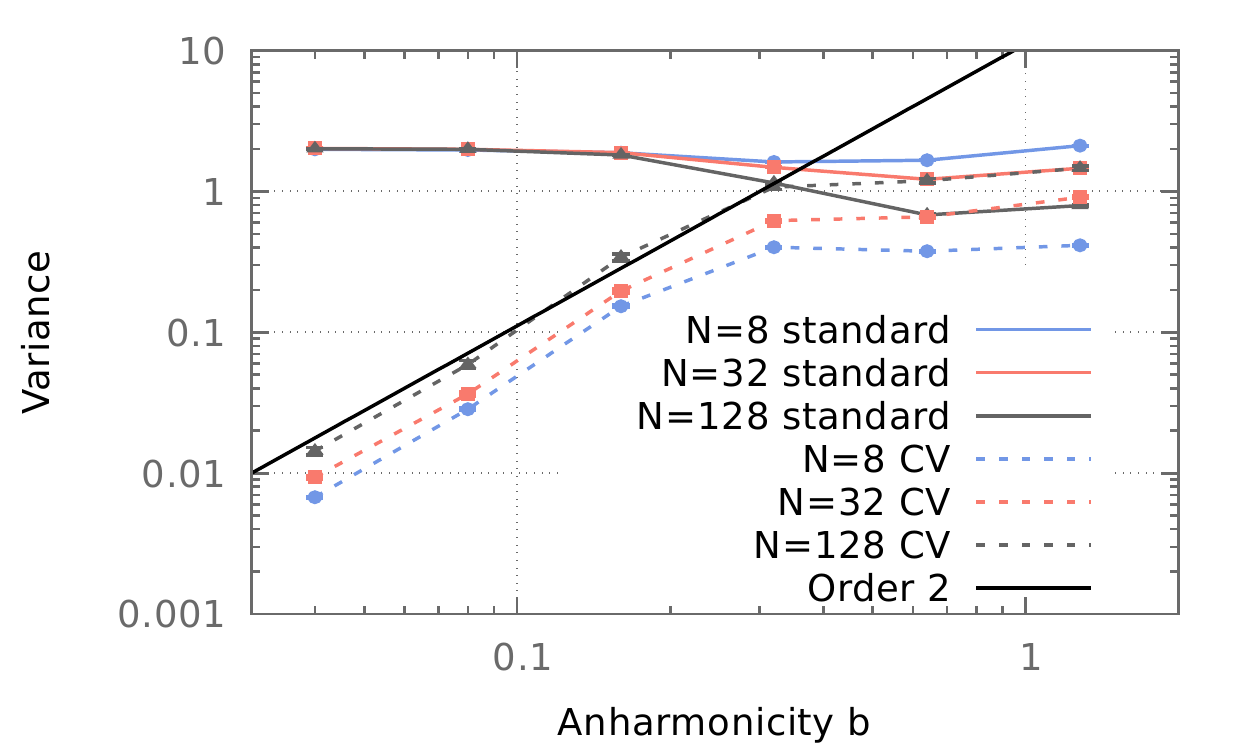}
\includegraphics[scale=0.5]{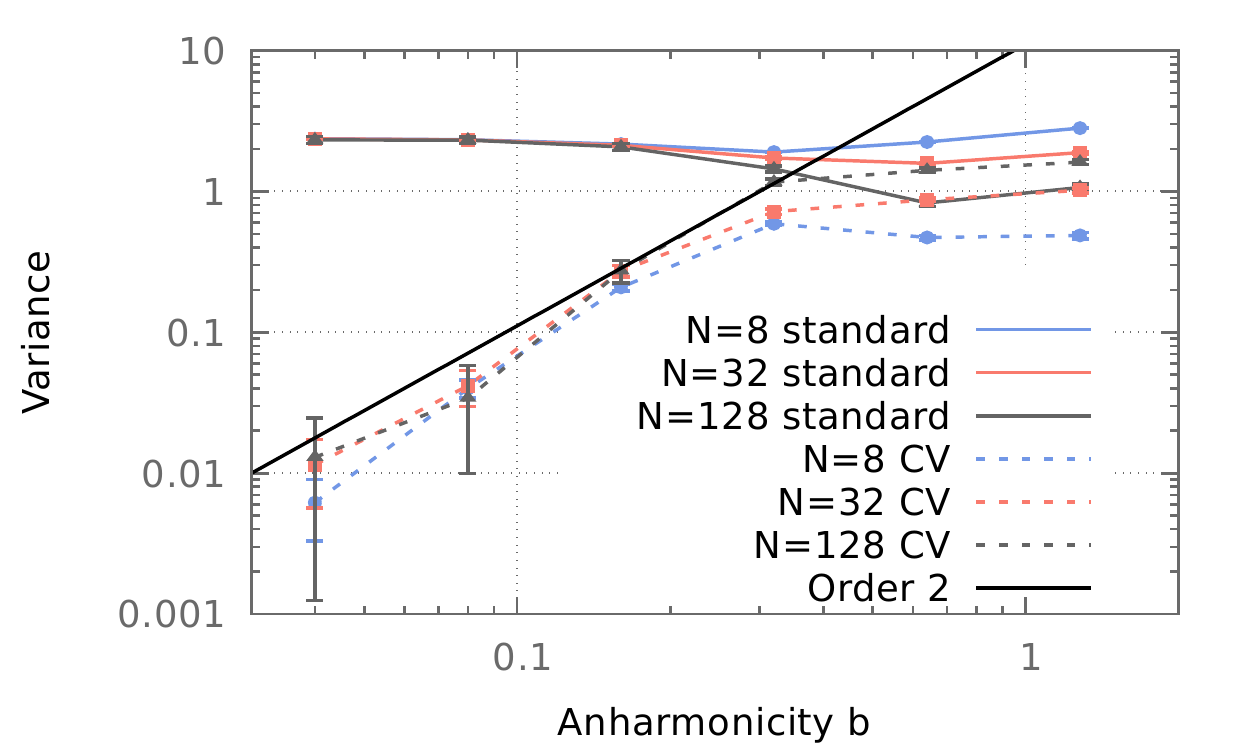}\\
\includegraphics[scale=0.5]{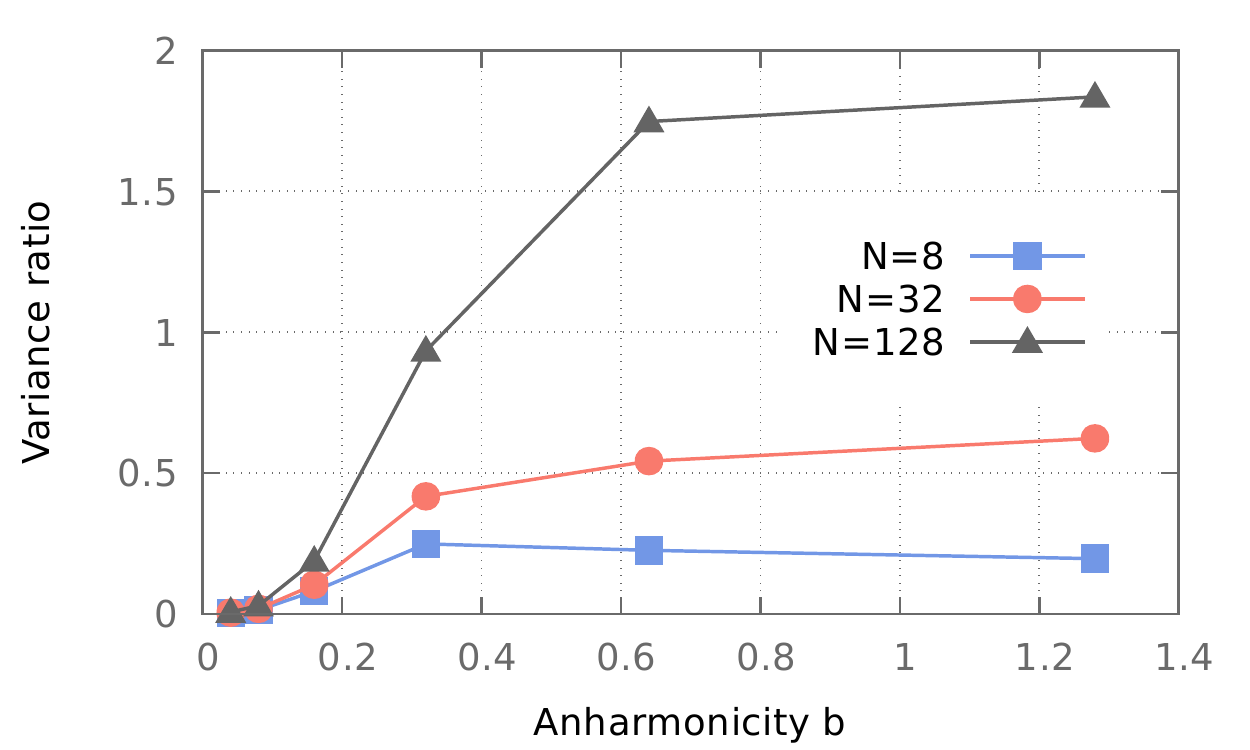}
\includegraphics[scale=0.5]{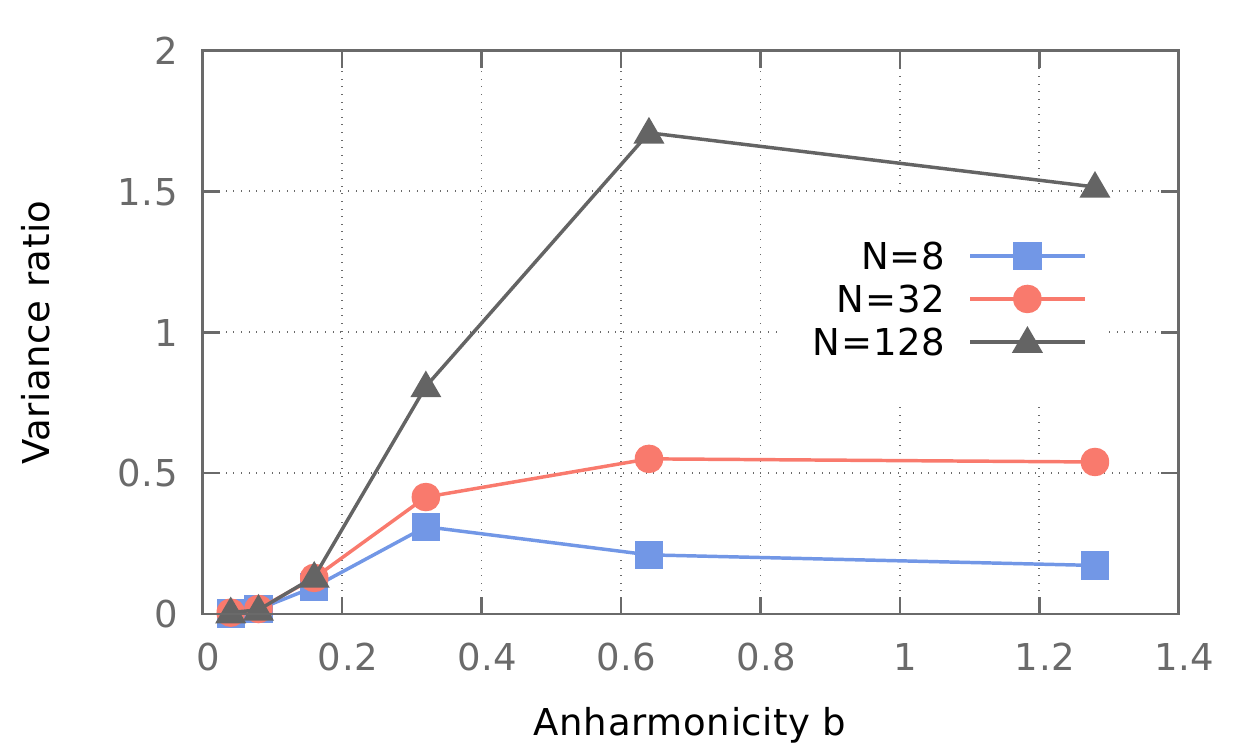}
\caption{Comparison of variances for the standard and modified fluxes. The reported variance ratio corresponds to the modified variance divided by the standard one. Left: Chain at equilibrium ($\TL=\TR=2$). Right: Chain out of equilibrium ($\TL=3$ and $\TR=1$).}
\label{fig:variance chain}
\end{center}
\end{figure}

\section{Solvated dimer under shear}
\label{s:dimer}

\newcommand{\naqi}{\nabla_{q_i}}
\newcommand{\Deqi}{\Delta_{q_i}}

A solvated dimer is a pair of bonded particles in a bath constituted of many other particles. It serves as a prototypical model of a molecule in solvent (e.g. peptide in water). This model has been used in the context of free energy computations~\cite{Lelievre10}. We apply to this system an external shearing force as in~\cite{Joubaud12}, also coined sinusoidal transverse field in the physics literature (see~\cite[Section 9.1]{Todd17} and \cite{Evans13_book, Todd07}), so that the invariant measure of the system is not known. A typical question is the influence of the shear force on the average bond length of the dimer.

\subsection{Full dynamics}

We consider $N$ particles in a two-dimensional box of length $L$ with periodic boundary conditions, with positions $q=(q_1,\cdots,q_N) \in \calD = (L \bbT)^{2N}$. Two of these particles, with positions $q_1$ and $q_2$, form a dimer whereas the other $N-2$ particles, with positions $q_3,\cdots,q_N$, constitute the solvent. The potential energy of the system is composed of three parts:
\begin{equation*}
  \begin{aligned}
    V(q) &= v(|q_1-q_2|) + \sum_{i \in \{1,2\}} \sum_{j=3}^N v_{\rm sol} (|q_i-q_j|) + \sum_{3 \leqslant i<j \leqslant N} v_{\rm sol} (|q_i-q_j|) \\
    &=: V_{\rm dim}(q) + V_{\rm inter}(q) + V_{\rm sol}(q),
  \end{aligned}
\end{equation*}
where $V_{\rm dim}$ is the potential energy of the dimer, $V_{\rm inter}$ is the interaction energy between the dimer and the solvent and $V_{\rm sol}$ is the potential energy of the solvent. The two particles forming the dimer interact via a double-well potential: denoting by $r=|q_1-q_2|$ the bond length,
\begin{equation}
  \label{eq:double well}
  v(r) = h \left[ 1 - \left( \frac {r-r_0}{\Delta r} \right)^2 \right]^2,
\end{equation}
where $r_0$, $\Delta r > 0$ (see Figure~\ref{fig:potentials}, Left). The potential $v$ presents two minima: one associated with a compact state of length $r=r_0-\Delta r$ and one associated with a stretched state of length $r=r_0+\Delta r$. These minima are separated by a potential barrier of height $h$. The particles of the solvent interact both with the other particles of the solvent and the particles of the dimer through a purely repulsive potential. In the following we consider two types of potentials with compact support (see Figure~\ref{fig:potentials}, Right): a soft repulsion potential (used in~\cite{Groot97} for example)
\begin{equation}
\label{eq:soft}
\forall r > 0, \quad v_{\rm sol}(r) = \varepsilon \left( 1 - \frac r {\rcut} \right) ^2  \bbone_{r \leqslant \rcut},
\end{equation}
where $\varepsilon, \rcut > 0$; and a singular Coulomb-like potential:
\begin{equation}
  \label{eq:coulomb}
  \forall r > 0, \quad v_{\rm sol}(r) = \varepsilon \left( \frac {\frac 1 {\sqrt r} - \frac 1 {\sqrt {\rcut}}} {\frac 1 {\sqrt \sigma} - \frac 1 {\sqrt {\rcut}}} \right)^2 \bbone_{\rm r < \rcut} = \varepsilon \frac \sigma r \left( \frac{1 - \sqrt{\frac r {\rcut}}} {1 - \sqrt{\frac \sigma {\rcut}}} \right)^2 \bbone_{r \leqslant \rcut}.
\end{equation}
This potential behaves like $\frac 1 r$ for $r \to 0$, reaches the value $\varepsilon$ at $r=\sigma$ and vanishes at $r=\rcut$,where its derivative also vanishes. Note that we recover the Coulomb potential $\varepsilon \frac \sigma r$ in the limit $\rcut \to +\infty$.
\begin{figure}
\caption{Pairwise potentials for the solvated dimer model. Left: Potential for the dimer and associated free energy in vacuum (see~\eqref{eq:1D dimer poisson problem}). Right: Potentials for the solvent interaction.}
\begin{center}
\includegraphics[scale=.58]{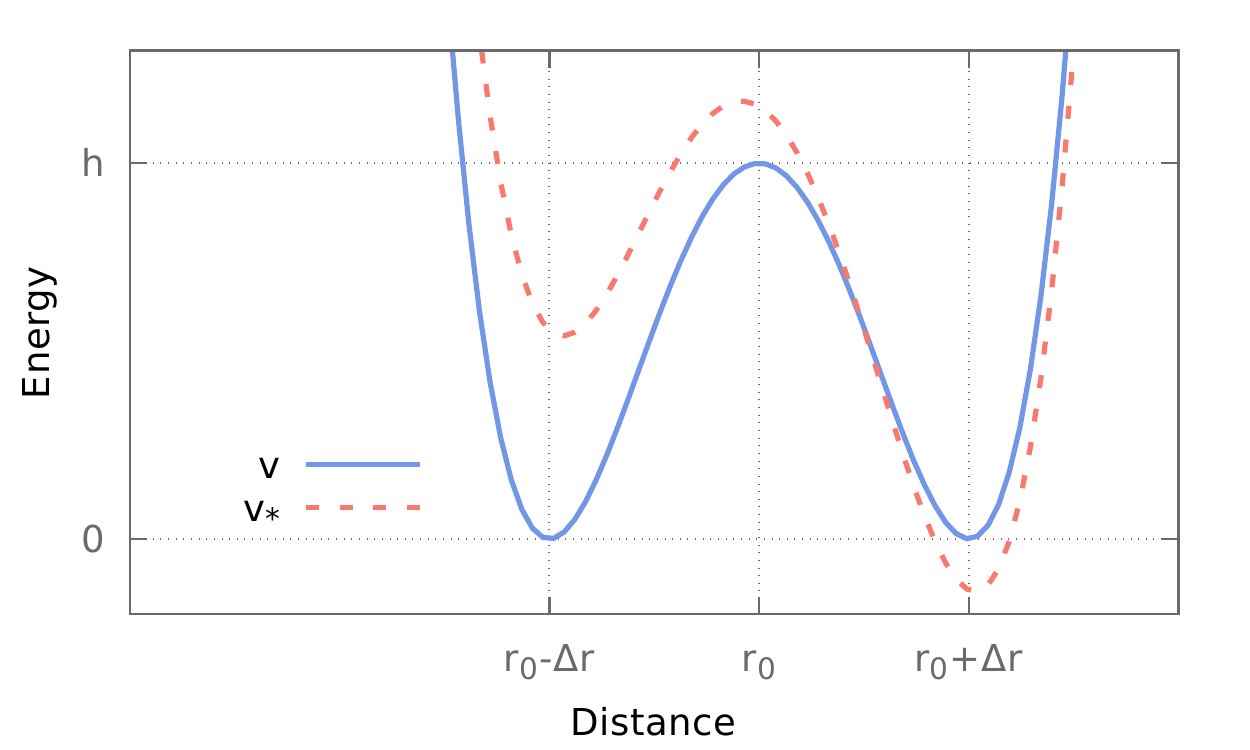}
\includegraphics[scale=.58]{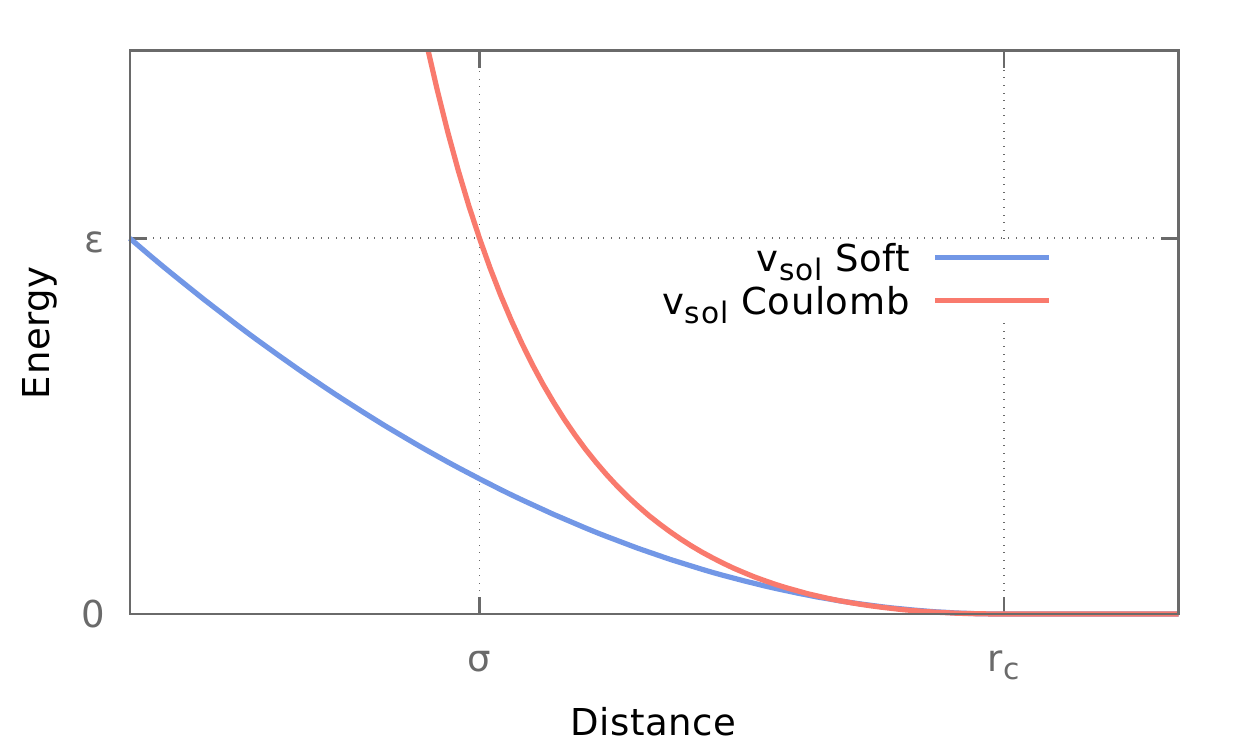}
\end{center}
\label{fig:potentials}
\end{figure}

The system is driven out of equilibrium by a shearing force of amplitude $\nu$. More precisely, a particle located at a position $(q_{i,x}, q_{i,y})$ experiences the force~\cite{Joubaud12, Todd17}:
\begin{equation*}
	(0,f(q_{i,x}))= \left( 0,\nu \sin(2\pi\frac{q_{i,x}}L) \right).
\end{equation*}
This force is in the $y$ direction and depends only on $x$. It therefore induces a non-equilibrium forcing since it is not of gradient type. We are interested in computing the mean length of the dimer $R(q,p) = |q_1 - q_2|$ as a function of this external forcing. The corresponding average is denoted by $\Esp[|q_1 - q_2|]$.

For simplicity we study the overdamped dynamics associated with~$V$, but everything can be adapted to the Langevin case. Since the space $\calD$ is compact and the noise in the dynamics is non degenerate, there exist a unique invariant probability measure $\pi$ by the Doeblin condition when the potentials under consideration are smooth. This invariant measure depends on~$\nu$, and is not explicit. Proving a similar result for singular potentials such as the Coulomb-like potential~\eqref{eq:coulomb} would require more work. 

The generator can be decomposed as:
\begin{equation*}
\begin{aligned}
  \calL &= -\nabla V(q) \cdot \nabla + \betainv \Delta + \nu \sum_{i=1}^N f(q_{i,x}) \partial_{q_{i,y}} 
  = \Ldim + \Linter + \Lsol + \nu \Lpert,	
\end{aligned}
\end{equation*}
where
\begin{alignat*}{3}
	\Ldim &= \sum_{i=1,2} \left( -\naqi V_{\rm dim}(q) \cdot \naqi + \betainv \Deqi \right),& \qquad
	\Linter &= - \nabla V_{\rm inter}(q) \cdot \nabla,\\
	\Lsol &= \sum_{i=3}^N \left( -\naqi V_{\rm sol}(q) \cdot \naqi + \betainv \Deqi \right), &\qquad
	\Lpert &= \sum_{i=1}^N f(q_{i,x}) \partial_{q_{i,y}}.
\end{alignat*}
Note that $\Ldim$ is the generator of the dynamics of the dimer at equilibrium in vacuum and $\Lsol$ is the generator of the dynamics of the solvent at equilibrium and without dimer.

\subsection{Simplified dynamics and control variate}

We consider the following reference Poisson equation where the system is at equilibrium and the interaction between the dimer and the solvent has been switched off:
\begin{equation}
\label{eq:dimer ref poisson problem}
	-\Lnot \Phinot = R - \Espnot[R],
\end{equation}
where
\begin{equation*}
	\Lnot = \Ldim + \Lsol.
\end{equation*}
Let us show that this equation admits a solution $\Phinot$ depending only on the length $|q_1-q_2|$ of the dimer. In order to highlight the dependence on the dimension of the underlying space, let us denote by $d=2$ this dimension. Assume that $\Phinot$ is defined for any $q \in (L \bbT)^{dN}$ by $\Phinot(q) = \frac 1 2 \psi(|q_1-q_2|)$ for some smooth function $\psi$. The Laplacian of $\Phinot$ can be rewritten using spherical coordinates as:
\begin{equation}
\label{eq:spherical laplacian}
	\Delta \Phinot(q) = \psi''(|q_1-q_2|) + \frac {d-1}{|q_1-q_2|} \psi'(|q_1-q_2|),
\end{equation}
where $d=2$ is the dimension of the underlying physical space. We obtain by substituting $\Phinot$ into~\eqref{eq:dimer ref poisson problem} that $\psi$ satisfies the following one-dimensional differential equation:
\begin{equation}
\begin{aligned}
\label{eq:1D dimer poisson problem}
	\forall r >0, \quad v_*'(r) \psi'(r) - \betainv \psi''(r) &= r - r^*,
\end{aligned}
\end{equation}
where $v_*(r) = v(r) - \frac {d-1} \beta \ln(r)$ and $r^* = \Esp_*[r]$ is the expectation of the length $r$ with respect to the probability measure $\pi_*(\dd r) = Z_*^{-1} \rme^{-\beta v_*(r)} \, \dd r$. Note the additional term $- \frac {d-1} \beta \ln(r)$ in the expression of $v_*$ coming from~\eqref{eq:spherical laplacian}, which can be interpreted as an entropic contribution. 

Let us first discuss the well-posedness of~\eqref{eq:1D dimer poisson problem}. The double-well potential~\eqref{eq:double well} considered here is such that $v_*$ is a bounded perturbation of a convex function. Therefore $\pi_*(\dd r)$ satisfies a log-Sobolev inequality and thus a Poincaré inequality by the Holley-Stroock theorem~\cite{Holley87} and the Bakry-Emery criterion~\cite{Bakry85}. This implies that the one-dimensional Poisson problem~\eqref{eq:1D dimer poisson problem} then admits a unique solution in 
\[
\rmH^1(\pi_*) \cap \rmL_0^2(\pi_*) = \left\{ \varphi \in H^1(\pi_*), \ \int_0^{+\infty} \varphi \, \dd\pi_* = 0 \right\}
\]
by the Lax-Milgram theorem for the variational formulation:
\begin{equation*}
  \forall u \in \rmH^1(\pi_*) \cap \rmL_0^2(\pi_*), \quad \betainv \int_0^\infty \psi'(r) u'(r) \, \pi_*(\dd r) = \int_0^\infty (r-r_*) u(r) \, \pi_*(\dd r).
\end{equation*}
We discuss precisely in Appendix~\ref{ap:resolution ode} how we numerically solve~\eqref{eq:1D dimer poisson problem}. Knowing the solution $\psi$, the corresponding modified observable then writes
\begin{equation*}
\begin{aligned}
  (R + \calL \Phinot)(q) &= |r_{12}| + \betainv \psi''(|r_{12}|) \\
  & + \left[ \frac 1 2 \big( \nabla_{q_1} V(q) - \nabla_{q_2} V(q) - \nu (f(q_{1,x}) - f(q_{2,x}))e_y \big) \cdot \frac {r_{12}}{|r_{12}|} +  \frac {d-1}{\beta |r_{12}|} \right] \psi'(|r_{12}|),
\end{aligned}
\end{equation*}
where $r_{12} = q_2 - q_1$ and $e_y=(0,1)$. Note that $\nabla_{q_1} V$ and $\nabla_{q_2} V$ are the forces that apply on particles $1$ and $2$ respectively, which depend also on the solvent variables.

\subsection{Numerical results}

We simulate a system of $N=64$ particles in $d=2$ dimensions, using periodic boundary conditions. We fix $L=8$ (so that the particle density is $1$), and $\beta = 1$. The parameters of the potentials are set to $\rcut=2.5$, $\varepsilon=1$, $h=1$, $r_0=3$ and $\Delta r = 1$ (see Figure~\ref{fig:potentials}). For the finite difference method used to solve the Poisson equation~\eqref{eq:1D dimer poisson problem} we use a mesh size $\Delta r = 10^{-3}$ on an interval $[0,r_{\rm max}]$ with $r_{\rm max} = 10$ (see Appendix~\ref{ap:resolution ode}).

The influence of the shearing on the average dimer length is plotted on Figure~\ref{fig:dimer length}. We see that a shear force of amplitude $\nu=1$ increases the mean length by roughly $1\%$, and that the response of the mean length to the nonequilibrium forcing is of order $2$. The response is small thus difficult to estimate accurately, hence the need for control variates to alleviate this issue.

In the case of an unsolvated dimer, Figure~\ref{fig:dimer variance reduction} (Left) shows that the variance of the modified observable scales like $\nu^2$, as predicted by Theorem~\ref{th:var order 2 eta exact}. Note that in the limit $\nu \to 0$ the modified observable is the constant $\Esp_0[R] = r_*$, which is computed by a numerical quadrature, so that the variance converges to zero.

When the solvent interacts with the dimer the variance of the modified observable plateaus at a certain value when $\nu \to 0$ , as expected from Theorem~\ref{th:var order 2 eta}. For the soft potential~\eqref{eq:soft}, the variance scales like $\nu$ for a forcing amplitude of order $1$, which is expected from Theorem~\ref{th:var order 2 eta} (see Figure~\ref{fig:dimer variance reduction}, Right). The variance stabilizes at a value which is ten times smaller than the initial one. For the Coulomb-like potential the influence of the solvent on the dimer is stronger and the control variate does not perform as well, as seen on Figure~\ref{fig:dimer variance reduction} (Bottom). For a small shearing the variance is however reduced by a factor 4.

\begin{figure}
\caption{Left: Mean length of a dimer, either unsolvated (in vacuum) or in a solvent with soft or Coulomb-like potential. Right: Relative variation of this mean length induced by the shearing. The solid line represents the reference scaling $\nu^2$.}
\begin{center}
\includegraphics[scale=.58]{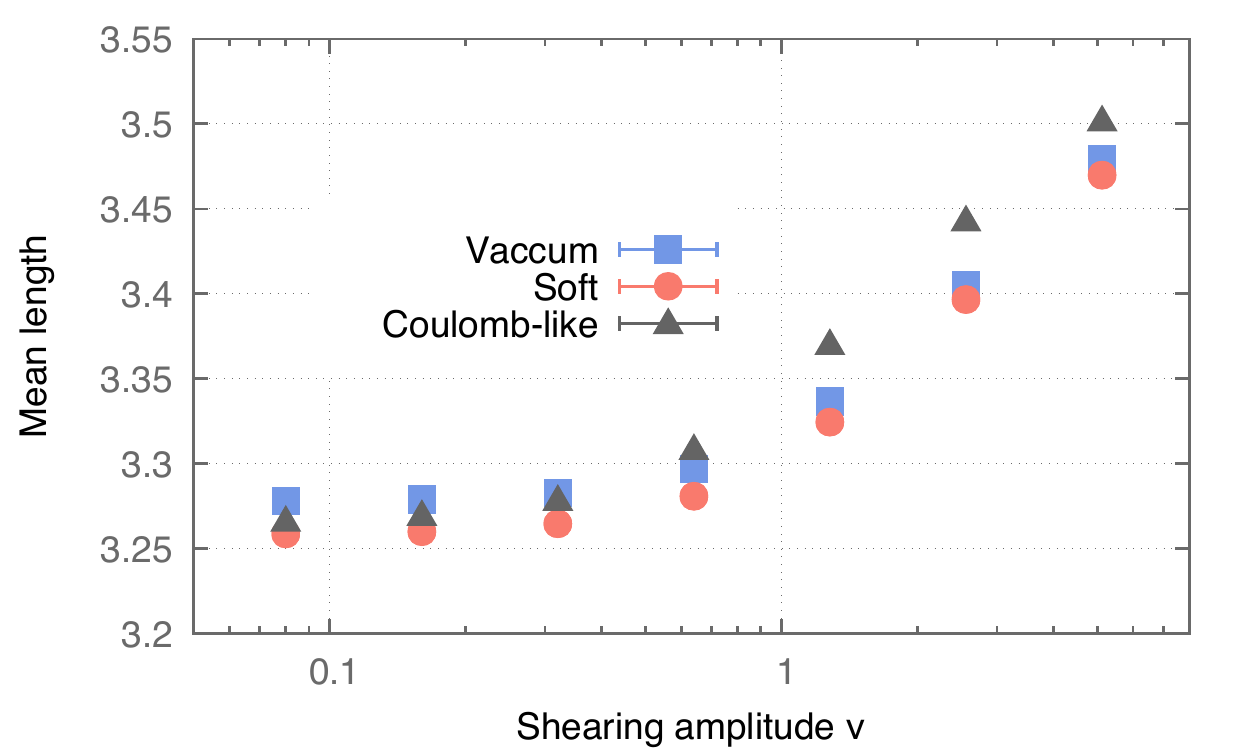}
\includegraphics[scale=.58]{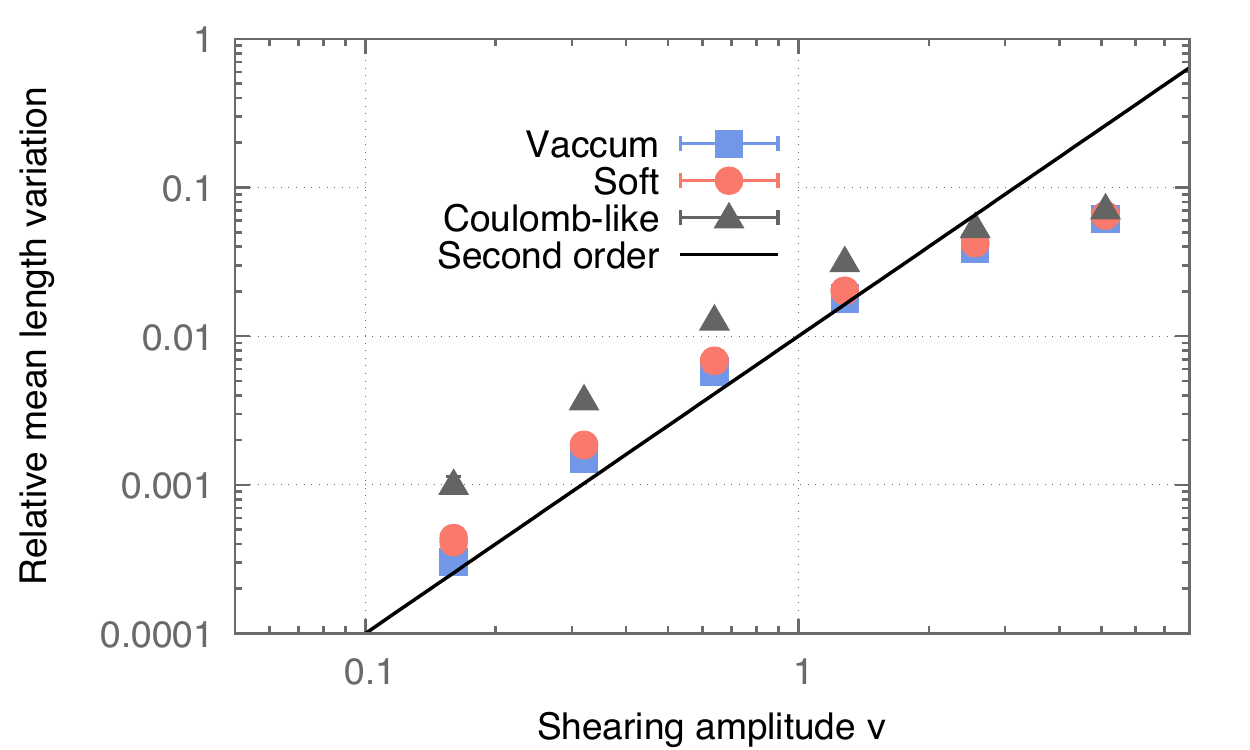}
\end{center}
\label{fig:dimer length}
\end{figure}

\begin{figure}
\caption{Asymptotic variance of the length of the dimer, with or without control variate. Left: Unsolvated dimer. Right: Solvent with the soft potential~\eqref{eq:soft}. Bottom: Solvent with the Coulomb-like potential~\eqref{eq:coulomb}.}
\begin{center}
\includegraphics[scale=.58]{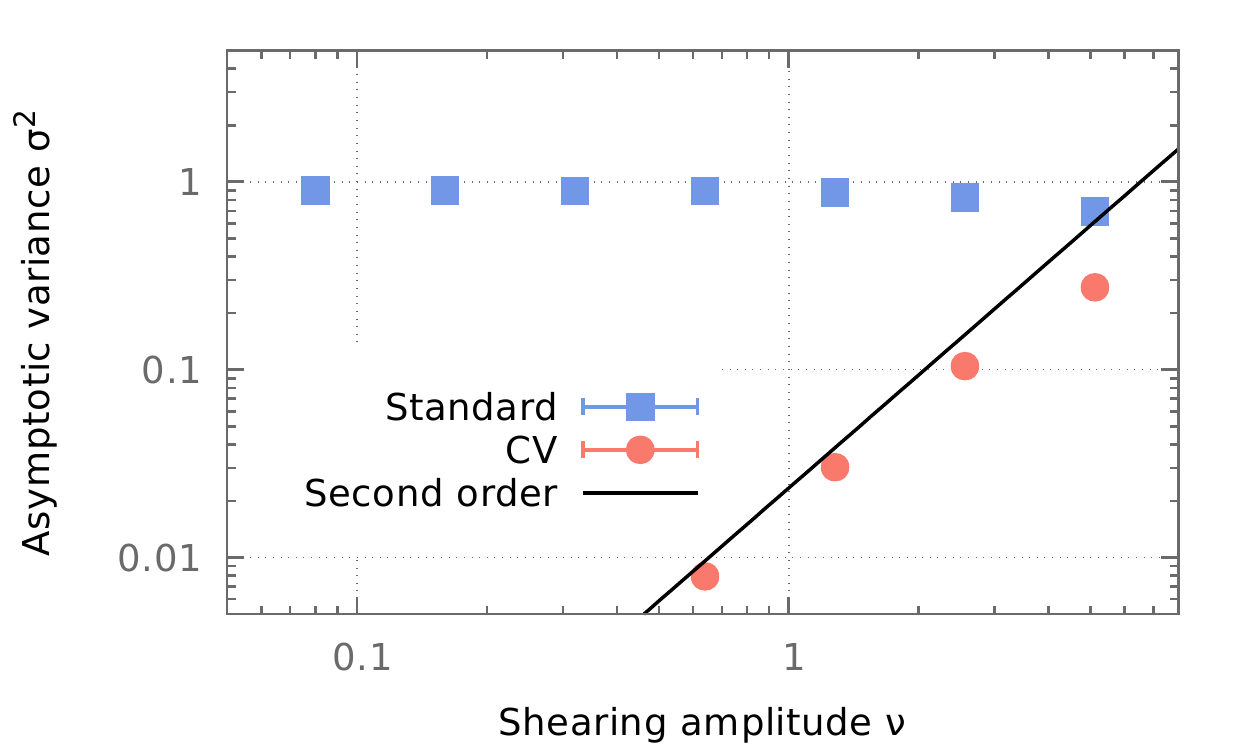}
\includegraphics[scale=.58]{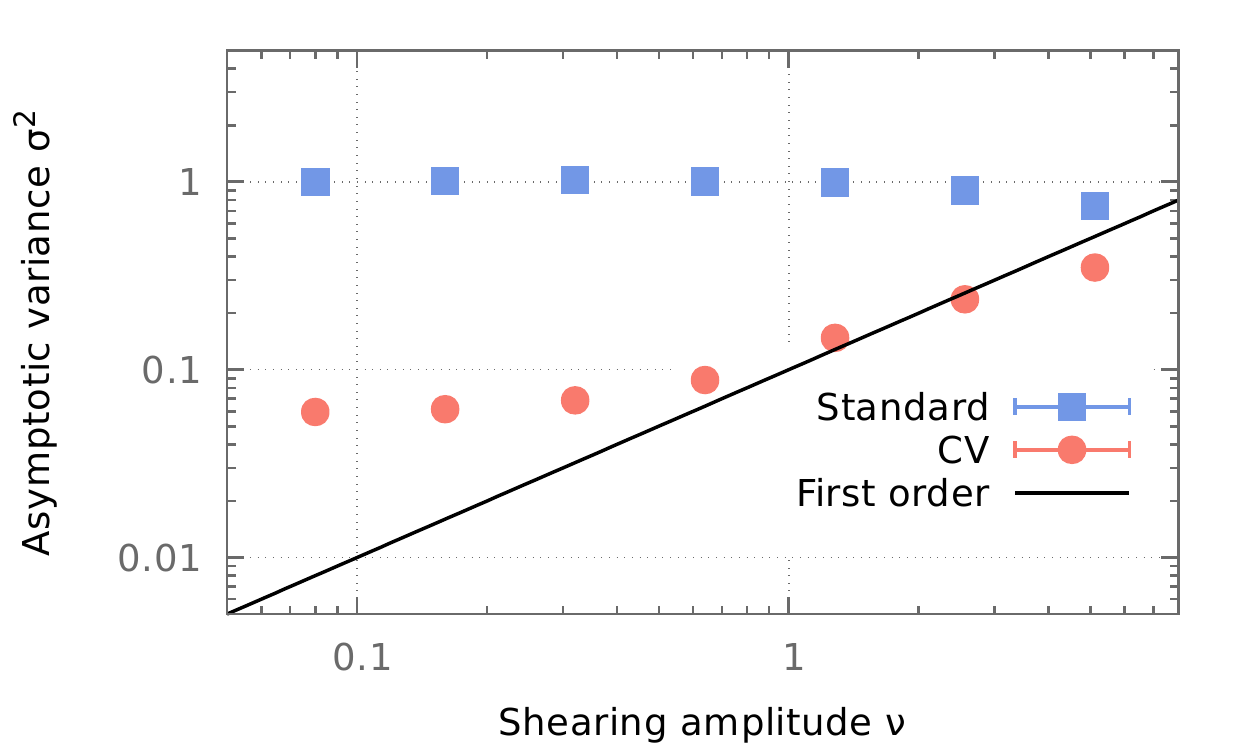}
\includegraphics[scale=.58]{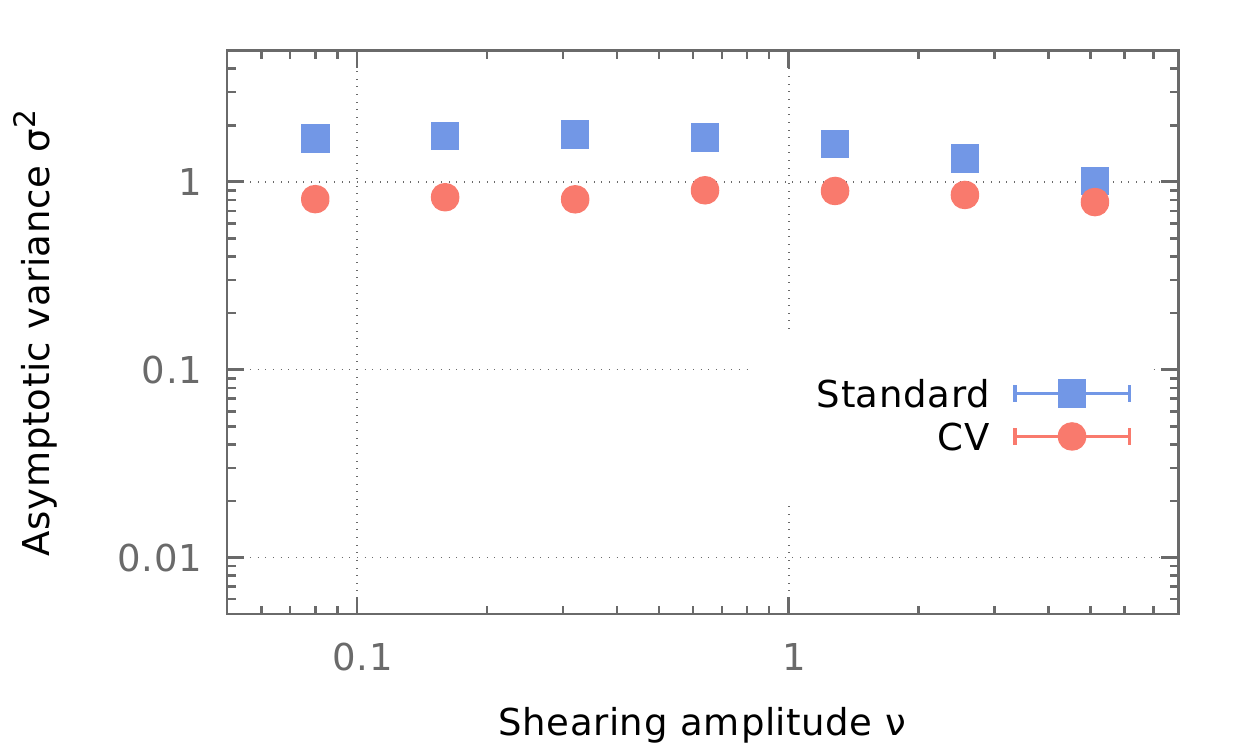}
\end{center}
\label{fig:dimer variance reduction}
\end{figure}

\paragraph{Generalization.}

The variance reduction strategy discussed here can be easily adapted to similar systems. For example Langevin dynamics can be treated by replacing~\eqref{eq:1D dimer poisson problem} by a two-dimensional PDE where the variables are the dimer length and the radial part of the momentum associated to this length. The Poisson equation should then be solved using a Galerkin approximation similar to what is done Section~\ref{s:sinus potential}. One could also consider a solvated molecule more complex than a dimer. In this case the PDE~\eqref{eq:1D dimer poisson problem} would be posed in several dimensions and thus become rapidly impossible to solve in practice. In general one has to reduce the system to a few relevant variables corresponding to a simplified Poisson equation in order to use the control variate approach developed here. This is connected to coarse-graining, \emph{i.e.} finding a few (nonlinear) functions of the degrees of freedom providing some macroscopic information on the system -- think of identifying an appropriate molecular backbone for proteins. An alternative route, which does not require an a priori physical knowledge of the system, would be to resort to greddy methods~\cite{Mokdad07, Temlyakov08, Cances11, Figueroa12}. If the system possesses in addition a specific symmetry or structure, one can make profit of dedicated tensor formats~\cite{Hackbusch12} as done for the Schrödinger equation in~\cite{Yserentant10}. This approach would be particularly adapted when studying an isotropic system composed of identical particles for example. Let us also mention recent advances on the resolution of Poisson equations based on deep convolutional neural networks~\cite{Shan18, Avrutskiy17}, which offer the promise of a better scalability with respect to the dimension of the system.

\section*{Acknowledgements}

The idea of using control variates came out of discussions with Antonietta Mira (USI) while Gabriel Stoltz was participating to the workshop “Free-energy calculations: a mathematical perspective” at Oaxaca. We thank Antoine Levitt (ENPC), Greg Pavliotis (Imperial College), Stefano Lepri (ISC) and Jonathan Goodman (NYU) for helpful discussions. This work is supported by the Agence Nationale de la Recherche under grant ANR-14-CE23-0012 (COSMOS); as well as the European Research Council under the European Union's Seventh Framework Programme (FP/2007-2013) -- ERC Grant Agreement number 614492. We also benefited from the scientific environment of the Laboratoire International Associ\'e between the Centre National de la Recherche Scientifique and the University of Illinois at Urbana-Champaign. Part of this work was done during the authors’ stay at the Institut Henri Poincar\'e - Centre Emile Borel during the trimester “Stochastic Dynamics Out of Equilibrium” (April-July 2017). The authors warmly thank this institution for its hospitality.

\appendix
\section{Proofs of Theorems~\ref{th:var order 2 eta exact} and~\ref{th:var order 2 eta}}
\label{ap:proof Th2}

Let us first prove Theorem~\ref{th:var order 2 eta exact}, and deduce Theorem~\ref{th:var order 2 eta} in a second step. We suppose in all this section that Assumptions~1 to~5 hold true. The norm and scalar product indexed by $\eta$ correspond to the canonical ones on $\Lpieta$. We start by giving a useful technical result.
\begin{lemma}
\label{lemma:bound expect}
For any $\eta_* > 0$ and $n \in \bbN$, there exists $C_{n,\eta_*} \in \mathbb{R}_+$ such that, for any $|\eta| \leqslant \eta_*$,
\begin{equation*}
	\forall \varphi \in \rmLinfn, \quad\left| \Espeta[\varphi] - \Espnot[\varphi] \right| \leqslant C_{n,\eta_*} \, |\eta|\, \| \varphi \|_\rmLinfn.
\end{equation*}
\end{lemma}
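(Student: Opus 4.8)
The plan is to introduce a Poisson equation for the \emph{full} generator $\Leta$ and then transfer the two invariance identities (for $\pieta$ and for $\pinot$) onto the constant function $\bfone$ through the $\Lpinot$-adjoints, so that the whole discrepancy between the two measures is captured by the single fixed function $\Lrep^* \bfone$. First I would fix $\varphi \in \rmLinfn$ and set $\psi_\eta = -\Letainv \Pieta \varphi$, which is well defined in $\rmLinfn$ by Assumption~\ref{as:generator eta} and solves $-\Leta \psi_\eta = \Pieta \varphi$, i.e. $\Leta \psi_\eta = \Espeta[\varphi] - \varphi$. Integrating this identity against $\pinot$ gives
\begin{equation*}
\Espeta[\varphi] - \Espnot[\varphi] = \Espnot[\Leta \psi_\eta] = \lang \Leta \psi_\eta, \bfone \rangnot .
\end{equation*}

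The key step is then to decompose $\Leta = \Lnot + \eta \Lrep$ and to move each operator onto $\bfone$ via its adjoint on $\Lpinot$:
\begin{equation*}
\lang \Leta \psi_\eta, \bfone \rangnot = \lang \psi_\eta, \Lnot^* \bfone \rangnot + \eta \lang \psi_\eta, \Lrep^* \bfone \rangnot = \eta \lang \psi_\eta, \Lrep^* \bfone \rangnot ,
\end{equation*}
where $\Lnot^* \bfone = 0$ because $\pinot$ is invariant for $\Lnot$, while $\Lrep^* \bfone \in \Lpinot$ by Assumption~\ref{as:Lrep}. The essential gain of inverting $\Leta$ (rather than $\Lnot$) and invoking the adjoint is that the perturbation enters only through the $\varphi$-independent function $\Lrep^* \bfone$, so that \emph{no derivative of $\psi_\eta$} appears in the final estimate; this is what makes the bound scale with $\| \varphi \|_\rmLinfn$ itself rather than with a norm of $\Lrep \psi_\eta$.

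It then remains to estimate $\lang \psi_\eta, \Lrep^* \bfone \rangnot$, which I would do by Cauchy--Schwarz on $\Lpinot$:
\begin{equation*}
\left| \lang \psi_\eta, \Lrep^* \bfone \rangnot \right| \leqslant \| \psi_\eta \|_\Lpinot \, \| \Lrep^* \bfone \|_\Lpinot \leqslant \| \calK_n \|_\Lpinot \, \| \Lrep^* \bfone \|_\Lpinot \, \| \psi_\eta \|_\rmLinfn .
\end{equation*}
The remaining norm $\| \psi_\eta \|_\rmLinfn$ is controlled uniformly on $|\eta| \leqslant \eta_*$: since $\calK_n \geqslant 1$ one has $\| \Pieta \varphi \|_\rmLinfn \leqslant (1 + \| \calK_n \|_\Lpieta) \| \varphi \|_\rmLinfn$ (bounding $|\Espeta[\varphi]| \leqslant \| \varphi \|_\rmLinfn \| \calK_n \|_\Lpieta$), after which~\eqref{eq:uniform lyapunov measure} and~\eqref{eq:uniform inverse generator} yield $\| \psi_\eta \|_\rmLinfn \leqslant C_{n,\eta_*}(1 + C_{n,\eta_*}) \| \varphi \|_\rmLinfn$. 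Collecting these uniform constants produces the advertised inequality with a factor $|\eta|$.

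The main obstacle is not the algebra above but the functional-analytic justification of the two adjoint manipulations: one must know that $\psi_\eta$ lies in the domains of $\Lnot$ and $\Lrep$ viewed as operators on $\Lpinot$, and that $\bfone$ belongs to the corresponding adjoint domains, so that $\lang \Lnot \psi_\eta, \bfone \rangnot = \lang \psi_\eta, \Lnot^* \bfone \rangnot$ and likewise for $\Lrep$. I would obtain this from the smoothing properties of the Poisson equation $-\Leta \psi_\eta = \Pieta \varphi$ (hypoellipticity of $\Leta$) together with the polynomial growth control carried by the Lyapunov functions and the core $\calS$, which is precisely the framework in which the adjoint $\Lrep^*$ of Assumption~\ref{as:Lrep} is meaningful. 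Once this domain issue is settled, the invariance $\Lnot^* \bfone = 0$ and the integrability $\Lrep^* \bfone \in \Lpinot$ close the argument.
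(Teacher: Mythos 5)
Your proof is correct and follows essentially the same route as the paper: both solve the Poisson equation $-\Leta \psi_\eta = \Pieta\varphi$ for the full generator, integrate against $\pinot$, use the invariance of $\pinot$ under $\Lnot$ (equivalently $\Lnot^*\bfone=0$) to isolate the term $\eta\lang \psi_\eta, \Lrep^*\bfone\rangnot$, and close with Cauchy--Schwarz together with the uniform bounds of Assumption~\ref{as:generator eta} and $\Lrep^*\bfone\in\Lpinot$. The only cosmetic differences are that you make the estimate $\|\Pieta\varphi\|_\rmLinfn \leqslant (1+\|\calK_n\|_\Lpieta)\|\varphi\|_\rmLinfn$ explicit where the paper leaves it implicit, while the paper instead first argues for $\varphi\in\calS$ and concludes by density.
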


\begin{proof}
For any $\psi \in \calS$,
\begin{equation*}
	\Espnot[\Leta \psi] = \eta \Espnot[\Lrep \psi],
\end{equation*}
so that, for a given $\varphi \in \calS$, the previous equality applied to $\psi = \Letainv \Pieta \varphi$ leads to
\begin{equation*}
  \Espnot[\Pieta \varphi] = \eta \Espnot[\Lrep \Letainv \Pieta \varphi] = \eta \lang \Lrep \Letainv \Pieta \varphi, \bfone \rangnot = \eta \lang \Letainv \Pieta \varphi, \Lrep^* \bfone \rangnot.
\end{equation*}
Since $\Espnot[\Pieta \varphi] = \Espnot[\varphi] - \Espeta[\varphi]$ and $|\varphi| \leqslant \| \varphi \|_{\rmL_n^\infty} \calK_n$, we obtain
\begin{equation*}
  \left| \Espeta[\varphi] - \Espnot[\varphi] \right| \leqslant |\eta| \big\| \Letainv \big\|_{\calB(\Pieta \rmLinfn)} \| \varphi \|_\rmLinfn \big\| \calK_n \big\|_0 \big\| \Lrep^* \bfone \big\|_0 \leqslant C_{\eta_*, n} |\eta| \| \varphi \|_\rmLinfn,
\end{equation*}
since $\Lrep^* \bfone \in \Lpinot$ by Assumption~\ref{as:generator simple} and $\big\| \calK_n \big\|_0 < +\infty$ by Assumption~\ref{as:lyapunov in L2}. The proof is concluded by the density of $\calS$ in~$\rmLinfn$.
\end{proof}

\begin{corollary}
\label{coro:scalar product}
For any $\eta_* > 0$ and $n, n' \in \bbN$, there exists $C_{n,n',\eta_*} \in \mathbb{R}_+$ such that, for any $|\eta| \leqslant \eta_*$,
\begin{equation}
\label{eq:bound scalar product}
	\forall \varphi \in \rmLinfn,\ \forall \psi \in \rmL_{n'}^\infty, \quad | \lang \varphi, \psi \rangeta - \lang \varphi, \psi \rangnot | \leqslant C_{n,n', \eta_*} |\eta| \| \varphi \|_\rmLinfn \| \psi \|_{\rmL_{n'}^\infty};
\end{equation}
and, for a given $\psi \in \calS$, there exists $C_{\psi, n, \eta_*} \mathbb{R}_+$ such that
\begin{equation}
\label{eq:bound scalar product L}
	\forall \varphi \in \rmLinfn, \quad | \lang \varphi, \Leta \psi \rangeta - \lang \varphi, \Lnot \psi \rangnot | \leqslant C_{\psi, n, \eta_*} |\eta| \| \varphi \|_\rmLinfn.
\end{equation}
\end{corollary}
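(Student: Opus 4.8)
The plan is to reduce both estimates to Lemma~\ref{lemma:bound expect} by viewing each scalar product as an expectation and by using the multiplicative stability of the Lyapunov spaces provided by the second part of Assumption~\ref{as:lyapunov in L2} to control products of functions. The only genuinely new ingredient beyond Lemma~\ref{lemma:bound expect} is the uniform control, over $|\eta|\leqslant\eta_*$, of expectations of Lyapunov functions against the $\eta$-dependent measure $\pieta$.

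For~\eqref{eq:bound scalar product}, I would first rewrite the difference of scalar products as a difference of expectations,
\[
\lang \varphi, \psi \rangeta - \lang \varphi, \psi \rangnot = \Espeta[\varphi \psi] - \Espnot[\varphi \psi].
\]
The pointwise bounds $|\varphi| \leqslant \|\varphi\|_\rmLinfn \calK_n$ and $|\psi| \leqslant \|\psi\|_{\rmL_{n'}^\infty} \calK_{n'}$ give $|\varphi \psi| \leqslant \|\varphi\|_\rmLinfn \|\psi\|_{\rmL_{n'}^\infty}\, \calK_n \calK_{n'}$. By Assumption~\ref{as:lyapunov in L2} there exists $m \in \bbN$ with $\calK_n \calK_{n'} \in \rmL_m^\infty$, whence $\varphi \psi \in \rmL_m^\infty$ and $\|\varphi \psi\|_{\rmL_m^\infty} \leqslant \|\calK_n \calK_{n'}\|_{\rmL_m^\infty}\, \|\varphi\|_\rmLinfn \|\psi\|_{\rmL_{n'}^\infty}$. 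Applying Lemma~\ref{lemma:bound expect} at level $m$ to the function $\varphi \psi$ then yields~\eqref{eq:bound scalar product} with $C_{n,n',\eta_*} = C_{m,\eta_*} \|\calK_n \calK_{n'}\|_{\rmL_m^\infty}$.

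For~\eqref{eq:bound scalar product L}, I would split $\Leta = \Lnot + \eta \Lrep$ and write
\[
\lang \varphi, \Leta \psi \rangeta - \lang \varphi, \Lnot \psi \rangnot = \big( \lang \varphi, \Lnot \psi \rangeta - \lang \varphi, \Lnot \psi \rangnot \big) + \eta \lang \varphi, \Lrep \psi \rangeta.
\]
For the first bracket, Assumption~\ref{as:generator simple} ensures $\Lnot \psi \in \calS$, hence $\Lnot \psi \in \rmL_{n_1}^\infty$ for some $n_1$, so the already-established estimate~\eqref{eq:bound scalar product} applied to the pair $(\varphi, \Lnot \psi)$ produces a bound $C\,|\eta|\, \|\varphi\|_\rmLinfn$, the factor $\|\Lnot \psi\|_{\rmL_{n_1}^\infty}$ being absorbed into the $\psi$-dependent constant. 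For the second term, Assumption~\ref{as:Lrep} gives $\Lrep \psi \in \calS \subset \rmL_{n_2}^\infty$, and I would bound directly
\[
|\lang \varphi, \Lrep \psi \rangeta| \leqslant \|\varphi\|_\rmLinfn \|\Lrep \psi\|_{\rmL_{n_2}^\infty}\, \Espeta[\calK_n \calK_{n_2}].
\]
Choosing $p$ with $\calK_n \calK_{n_2} \in \rmL_p^\infty$ and using $\Espeta[\calK_p] \leqslant \|\calK_p\|_\Lpieta \leqslant C_{p,\eta_*}$, the last inequality being~\eqref{eq:uniform lyapunov measure}, makes this quantity bounded uniformly in $|\eta| \leqslant \eta_*$. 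The explicit prefactor $\eta$ then supplies the $|\eta|$ scaling, and summing the two contributions gives~\eqref{eq:bound scalar product L}.

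The main point requiring care is the uniformity of all constants over the compact set $|\eta|\leqslant\eta_*$. For~\eqref{eq:bound scalar product} this is inherited directly from Lemma~\ref{lemma:bound expect}, but the second term of~\eqref{eq:bound scalar product L} genuinely relies on the uniform moment bound~\eqref{eq:uniform lyapunov measure} of Assumption~\ref{as:generator eta}, precisely because the expectation $\Espeta[\calK_n \calK_{n_2}]$ is taken against the $\eta$-dependent measure $\pieta$ rather than $\pinot$. The remaining bookkeeping is only to track which Lyapunov indices $m$ and $p$ are produced by the product-stability part of Assumption~\ref{as:lyapunov in L2}; none of these steps presents a difficulty beyond this uniformity check.
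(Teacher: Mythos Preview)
Your proof is correct and follows essentially the same approach as the paper. The only minor difference is in how you bound the second term $|\eta|\,|\langle\varphi,\Lrep\psi\rangle_\eta|$: you control $\Espeta[\calK_n\calK_{n_2}]$ directly via the uniform $\rmL^2$ bound~\eqref{eq:uniform lyapunov measure}, whereas the paper applies~\eqref{eq:bound scalar product} once more to compare $\langle\varphi,\Lrep\psi\rangle_\eta$ with $\langle\varphi,\Lrep\psi\rangle_0$ and then bounds the latter (fixed, $\eta$-independent) scalar product; both routes yield the same conclusion.
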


\begin{proof}
  In view of Assumption~\ref{as:lyapunov in L2} there exist $m \in \bbN$ depending only on $n$ and $n'$ such that $\| \calK_n \calK_{n'} \|_{\rmL_{m}^\infty} < +\infty$. Therefore, writing
\begin{equation*}
  \varphi \psi = \frac \varphi {\calK_n} \frac \psi {\calK_{n'}} \, \calK_n \calK_{n'}.
\end{equation*}
we obtain
\begin{equation*}
  \| \varphi \psi \|_\rmLinfm \leqslant \| \varphi  \|_\rmLinfn \, \| \psi \|_{\rmL_{n'}^\infty} \, \| \calK_n \calK_{n'} \|_\rmLinfm.
\end{equation*}
The estimate~\eqref{eq:bound scalar product} then follows from Lemma~\ref{lemma:bound expect} since $\lang \varphi, \psi \rangeta = \Espeta[\varphi \psi]$. Fix now $\psi \in \calS$. There exist $n',n'' \in \bbN$ such that $\Lnot \psi \in \rmL_{n'}^\infty$ and $\Lrep \psi \in \rmL_{n''}^\infty$. Therefore, using Lemma~\ref{lemma:bound expect} twice,
\begin{equation*}
\begin{aligned}
  & | \lang \varphi, \Leta \psi \rangeta - \lang \varphi, \Lnot \psi \rangnot | \leqslant \big| \lang \varphi, \Lnot \psi \rangeta - \lang \varphi, \Lnot \psi \rangnot \big| + |\eta | \left| \lang \varphi, \Lrep \psi \rangeta \right|\\
  & \qquad \leqslant C_{n,n',\eta_*} |\eta|\, \| \varphi \|_\rmLinfn \| \Lnot\psi \|_{\rmL_{n'}^\infty} + |\eta| \left| \lang \varphi, \Lrep \psi \rangnot \right| + \eta^2 C_{n,n'',\eta_*} \| \varphi \|_\rmLinfn \| \left\| \Lrep \psi \right\|_{\rmL_{n''}^\infty}.
\end{aligned}
\end{equation*}
This implies~\eqref{eq:bound scalar product L} since $n'$ and $n''$ depend only on $\psi$.
\end{proof}

We can now provide the proof of Theorem~\ref{th:var order 2 eta exact}.

\begin{proof}[Proof of Theorem~\ref{th:var order 2 eta exact}.]
When $\Pi_0 A$ is not bounded one needs to define an approximation of $-\Letainv \Pi_\eta \phi_\eta$ at order $K$ in $\eta$, as done in~\cite{Redon16} for instance:
\begin{equation*}
  Q^K := -\Pi_\eta \Lnotinv \Pi_0 \sum_{k=1}^K \eta^k A^k R \in \calS.
\end{equation*}
Let us show that this is indeed a good approximation. Using successively~\eqref{eq:Leta Linv} and~\eqref{eq:projected modified obs} the corresponding truncation error reads:
\begin{equation*}
\begin{aligned}
	\Pi_\eta \phi_\eta + \Leta Q^K &= \Pi_\eta \phi_\eta - \Leta \Lnotinv \Pi_0 \sum_{k=1}^K \eta^k A^k R \\
	&= \eta \Pi_\eta A R - \Pi_\eta (1 - \eta A) \sum_{k=1}^K \eta^k A^k R \\
	&= \eta^{K+1} \Pi_\eta A^{K+1} R,
\end{aligned}
\end{equation*}
which implies:
\begin{equation}
\label{eq:QK}
	Q^K + \Letainv \Pieta \phi_\eta = \eta^{K+1} \Letainv \Pi_\eta A^{K+1} R.
\end{equation}

Let us first show that the corresponding approximated asymptotic variance $\sigma_{\phi_{\eta, K}}^2 := 2 \lang \Pi_\eta \phi_\eta, Q^K \rangeta$ is close to $\sigma_{\phi_\eta,\eta}^2$ (defined in~\eqref{eq:variance eta}). Indeed,
\begin{equation*}
\begin{aligned}
	\sigma_{\phi_\eta, \eta}^2 - \sigma_{\phi_{\eta, K}}^2 &= 2 \lang \Pi_\eta \phi_\eta, -\Letainv \Pi_\eta \phi_\eta - Q^K \rangeta = 2 \eta^{K+1} \lang \Pi_\eta \phi_\eta, -\Letainv \Pi_\eta A^{K+1} R \rangeta.
\end{aligned}
\end{equation*}
Note that $\Pi_\eta A^{K+1} R \in \calS$ because $\calS$ is stable by $\Lnotinv \Pi_0$ and $\Lrep$ in view of Assumptions~\ref{as:generator simple} and~\ref{as:Lrep}. Since $\Pi_\eta \phi_\eta \in \calS$ as well, there exist $n \in \bbN$ (depending on $R$ and $K$) and $m \in \bbN$ (depending on $R$) such that $\Pi_\eta A^{K+1} \Pi_0 R \in \rmLinfn$ and  $\Pi_\eta \phi_\eta \in \rmLinfm$. Note that $m$ does not depend on $\eta$ in view of the expression~\eqref{eq:modified obs} of $\phi_\eta$. Using Assumption~\ref{as:generator simple} we obtain, for any $\eta_* > 0$ and $|\eta| \leqslant \eta_*$,
\begin{equation}
\label{eq:bounds app}
\begin{aligned}
	| \sigma_{\phi_\eta,\eta}^2 - \sigma_{\phi_{\eta, K}}^2 | &\leqslant 2 |\eta|^{K+1} \| \Pi_\eta \phi_\eta \|_\rmLinfm \left\| \Letainv \Pi_\eta A^{K+1} R \right\|_\rmLinfn \lang \calK_m, \calK_n \rangeta \\
	&\leqslant 2 |\eta|^{K+2} \| \Pieta A R \|_\rmLinfm \left\| \Letainv \right\|_{\calB(\Pieta \rmLinfn)} \left\| A^{K+1} R \right\|_\rmLinfn \| \calK_m \|_{L^2(\pi_\eta)} \ \| \calK_n \|_{L^2(\pi_\eta)},
\end{aligned}
\end{equation}
where the four terms on the right hand side are uniformly bounded for $|\eta| \leqslant \eta_*$ in view of Assumption~\ref{as:generator eta} and Lemma~\ref{lemma:bound expect}. This shows that there exists $C_{R, \eta_*, K} \in \mathbb{R}_+$ such that, for any $|\eta| \leqslant \eta_*$,
\begin{equation}
\label{eq:truncation error sigma}
\sigma_{\phi_\eta,\eta}^2 - \sigma_{\phi_{\eta, K}}^2 = \eta^{K+2} E_{R, \eta, K},
\end{equation}
where $|E_{R, \eta, K}| \leqslant C_{R, \eta_*, K}$.

At this stage it is sufficient to prove the expansion~\eqref{eq:var order 2 eta exact} for $\sigma_{\phi_{\eta, K}}^2$. The approximate variance $\sigma_{\phi_{\eta, K}}^2$ can be expanded in powers of $\eta$ as follows:
\begin{equation*}
  \begin{aligned}
    \sigma_{\phi_{\eta, K}}^2 &= 2 \lang \Pi_\eta \phi_\eta, Q^K \rangeta 
    = 2 \lang \eta \Pi_\eta A R, Q^K \rangeta 
    =  -2 \eta \sum_{k=1}^K \eta^k \lang \Pi_\eta A R,  \Lnotinv \Pi_0 A^k R \rangeta.
  \end{aligned}
\end{equation*}
In fact it suffices to consider $K=1$. We use Lemma~\ref{lemma:bound expect} and Corollary~\ref{coro:scalar product} to replace integrals with respect to $\pieta$ by integrals with respect to $\pinot$: there exists $C_{R, \eta_*} \in \mathbb{R}_+$ such that, for any $|\eta| \leqslant \eta_*$,
\begin{equation*}
  \sigma_{\phi_{\eta, 1}}^2 = -2 \eta^2 \lang \Pi_\eta A R,  \Lnotinv \Pi_0 A R \rangeta = -2 \eta^2 \lang A R,  \Lnotinv \Pi_0 A R \rangnot + \eta^3 \widetilde{E}_{R, \eta},
\end{equation*}
with $|\widetilde{E}_{R, \eta}| \leqslant C_{R, \eta_*}$. The claimed result then follows by~\eqref{eq:truncation error sigma}.
\end{proof}

\newcommand{\LetaS}{\calL_\eta^{\rm S}}
The following lemma is useful for the proof of Theorem~\ref{th:var order 2 eta} and is also used in Section~\ref{s:quasi harmonic}. Denote by $\LetaS$ the symmetric part of $\Leta$ on $\Lpieta$, defined as
\[
\forall \varphi, \psi \in \mathcal{S}, \qquad \left\langle \LetaS \varphi, \psi \right\rangle_\eta = \frac12 \Big( \langle \calL_\eta \varphi, \psi \rangle_\eta + \langle \varphi, \calL_\eta \psi \rangle_\eta \Big).
\]
Note that the action of this operator is not explicit when $\pieta$ is not known.
\begin{lemma}
\label{lemma:variance LU}
For any $\varphi, U \in \calS$,
\begin{equation*}
  \sigma_{\varphi + \Leta U,\eta}^2 = \sigma_{\varphi,\eta}^2 + \lang -\LetaS U, 2 \Letainv \Pieta \varphi + \Pieta U \rangeta.
\end{equation*}
\end{lemma}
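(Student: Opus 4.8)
The plan is to start from the representation~\eqref{eq:variance eta} of the asymptotic variance applied to the observable $\psi := \varphi + \Leta U$, and to expand it by exploiting two elementary facts about the projection and the inverse generator. First, since $\pieta$ is invariant one has $\Espeta[\Leta U] = 0$, so $\Leta U$ already has zero mean and $\Pieta \psi = \Pieta \varphi + \Leta U$. Second, because $\Letainv$ is by definition the inverse of $\Leta$ on mean-zero functions, $\Letainv \Leta U = \Pieta U$. Substituting these into $\sigma_{\psi,\eta}^2 = 2\lang \psi, -\Letainv \Pieta \psi \rangeta$ yields
\begin{equation*}
  \sigma_{\varphi+\Leta U,\eta}^2 = 2\lang \varphi + \Leta U,\ -\Letainv \Pieta \varphi - \Pieta U \rangeta,
\end{equation*}
which I would expand into four scalar products.

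The first of these reproduces $\sigma_{\varphi,\eta}^2 = 2\lang \varphi, -\Letainv \Pieta \varphi \rangeta$. The remaining three are two ``cross'' terms, $2\lang \varphi, -\Pieta U \rangeta$ and $2\lang \Leta U, -\Letainv \Pieta \varphi \rangeta$, together with one ``diagonal'' term $2\lang \Leta U, -\Pieta U \rangeta$. Throughout I would use that $\Pieta$ is an orthogonal projection on $\Lpieta$: it is idempotent and self-adjoint, since subtracting the $\pieta$-mean is a symmetric operation, so $\Pieta$ may be freely transferred between the two slots of the scalar product and inserted in front of $U$ or $\varphi$ at will.

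The crux is the second cross term $2\lang \Leta U, -\Letainv \Pieta \varphi \rangeta$, since $\Leta$ is not symmetric. Here I would move $\Leta$ onto the other factor through the defining identity of the symmetric part, namely $\lang \Leta U, g \rangeta = 2\lang \LetaS U, g \rangeta - \lang U, \Leta g \rangeta$, applied with $g = -\Letainv \Pieta \varphi$ and using $\Leta(-\Letainv \Pieta \varphi) = -\Pieta \varphi$. This splits the term into a genuine $\LetaS$-contribution and a remainder proportional to $\lang U, \Pieta \varphi \rangeta$, which by self-adjointness of $\Pieta$ cancels the first cross term $2\lang \varphi, -\Pieta U \rangeta$. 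For the diagonal term I would use that $\Leta U$ has zero mean and that the antisymmetric part drops when pairing $U$ with itself, i.e. $\lang \Leta U, U \rangeta = \lang \LetaS U, U \rangeta = \lang \LetaS U, \Pieta U \rangeta$. Collecting the surviving contributions expresses everything as $\LetaS U$ paired against $2\Letainv \Pieta \varphi + \Pieta U$, which is the announced identity.

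I expect the main difficulty to be not the algebra but its rigorous justification. All the adjoint and symmetrization manipulations must be performed on the core $\calS$, on which $\Leta$, $\LetaS$ and $\Letainv \Pieta$ map into $\calS \subset \Lpieta$ by Assumptions~\ref{as:generator simple} and~\ref{as:generator eta}; this matters because $\LetaS$ has \emph{no} explicit expression when $\pieta$ is unknown, so it can only be handled weakly through its definition. One must also track the mean-zero projections $\Pieta$ carefully at each step, the invariance of $\pieta$ entering solely through $\Espeta[\Leta U] = 0$ and the resulting identity $\Letainv \Leta U = \Pieta U$.
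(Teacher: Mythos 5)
Your argument follows the paper's proof essentially step for step: expand the quadratic form $\lang \psi, -\Letainv\Pieta\psi\rangeta$ for $\psi=\varphi+\Leta U$ into four scalar products using $\Pieta\Leta U=\Leta U$ and $\Letainv\Leta U=\Pieta U$, transfer $\Leta$ onto the other slot in the cross term so that only the symmetric part survives, and observe that the diagonal term only sees $\LetaS$ as well. The intermediate identities you invoke (the cancellation of the remainder $\lang U,\Pieta\varphi\rangeta$ against the first cross term, and $\lang\Leta U,U\rangeta=\lang\LetaS U,\Pieta U\rangeta$, the latter using $\Espeta[\LetaS U]=\tfrac12\Espeta[\Leta U]=0$) are all correct, and your insistence that $\LetaS$ can only be handled weakly on the core is well placed.

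The one point you must fix is the overall normalization. Carrying the factor $2$ of~\eqref{eq:variance eta} through your own computation, the surviving cross contribution is $\lang -\LetaS U, 4\Letainv\Pieta\varphi\rangeta$ and the diagonal contribution is $\lang -\LetaS U, 2\Pieta U\rangeta$, so what you actually obtain is
\begin{equation*}
  \sigma_{\varphi+\Leta U,\eta}^2=\sigma_{\varphi,\eta}^2+2\lang -\LetaS U,\,2\Letainv\Pieta\varphi+\Pieta U\rangeta,
\end{equation*}
i.e.\ \emph{twice} the correction term announced in the lemma; your closing claim that this ``is the announced identity'' skips over this factor. The discrepancy is not yours alone: the paper's own proof opens with $\sigma_{\varphi+\Leta U,\eta}^2=\lang\varphi+\Leta U,-\Letainv\Pieta(\varphi+\Leta U)\rangeta$, silently dropping the factor $2$ of~\eqref{eq:variance eta}, and its algebra is internally consistent with that convention. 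So either the lemma is to be read with the normalization $\sigma_{\varphi,\eta}^2=\lang\varphi,-\Letainv\Pieta\varphi\rangeta$, or its right-hand side should carry an extra factor $2$; under the convention of~\eqref{eq:variance eta}, your (correctly completed) computation gives the latter. State explicitly which normalization you are using rather than asserting the match.
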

\begin{proof}
By definition of the asymptotic variance,
\begin{equation*}
\begin{aligned}
  \sigma_{\varphi + \Leta U,\eta}^2 &= \lang \varphi + \Leta U, -\Leta^{-1} \Pieta \left( \varphi + \Leta U \right) \rangeta \\
  &= \sigma_{\varphi,\eta}^2 - \lang \varphi, \Pieta U \rangeta - \lang \Leta U, \Letainv \Pieta \varphi \rangeta - \lang \Leta U, \Pieta U \rangeta \\
  &= \sigma_{\varphi,\eta}^2 - \lang \Pieta U, \Leta \Letainv \Pieta \varphi \rangeta + \lang \Leta U, -\Letainv \Pieta \varphi \rangeta + \lang -\LetaS U, \Pieta U \rangeta \\
  &= \sigma_{\varphi,\eta}^2 + \lang -\LetaS U, 2 \Letainv \Pieta \varphi + \Pieta U \rangeta ,
\end{aligned}
\end{equation*}
which is the desired result.
\end{proof}

We now deduce Theorem~\ref{th:var order 2 eta} from Theorem~\ref{th:var order 2 eta exact} using Lemma~\ref{lemma:variance LU}.

\begin{proof}[Proof of Theorem~\ref{th:var order 2 eta}.]
  We use Lemma~\ref{lemma:variance LU} with $U=\varepsilon f$ to compute the asymptotic variance of
  \begin{equation*}
    \phi_{\eta, \varepsilon} = \phi_\eta + \varepsilon \Leta f,
  \end{equation*}
  with $\phi_\eta$ given by~\eqref{eq:modified obs}. Noting that (from~\eqref{eq:QK} with $K=1$)
  \begin{equation*}
    \Letainv \Pieta \phi_\eta = \eta^2 \Letainv \Pieta A^2 R + \eta \Pieta \Lnotinv \Pinot A R,
  \end{equation*}
  it comes
  \begin{equation}
    \label{eq:calcul th4}
    \begin{aligned}
      \sigma_{\phi_{\eta, \varepsilon},\eta}^2&= \sigma_{\phi_\eta,\eta}^2 + \varepsilon \lang -\LetaS f, 2 \Letainv \Pieta \phi_\eta + \varepsilon \Pieta f \rangeta \\
      &= \sigma_{\phi_\eta,\eta}^2 + 2 \varepsilon \eta \lang -\LetaS f, \Pieta \Lnotinv \Pinot A R \rangeta + \varepsilon^2 \lang -\LetaS f, \Pieta f \rangeta \\
& \quad     +2 \varepsilon \eta^2 \lang -\LetaS f, \Letainv \Pieta A^2 R \rangeta\\
      &= \sigma_{\phi_\eta,\eta}^2 + \varepsilon \eta \lang \Leta f, -\Lnotinv \Pinot A R \rangeta - \varepsilon \eta \lang f, \Leta \Lnotinv \Pinot A R \rangeta\\
      &\quad + \varepsilon^2 \lang -\Leta f, f \rangeta 
      +\varepsilon \eta^2 \lang \Leta f, -\Letainv \Pieta A^2 R \rangeta - \varepsilon \eta^2 \lang f, \Pieta A^2 R \rangeta.
    \end{aligned}
\end{equation}
In order to retain only the leading order terms in the expansion in $\eta$ and $\varepsilon$ we first bound the two last terms in the last equation of~\eqref{eq:calcul th4} in a fashion similar to~\eqref{eq:bounds app}. Then we change the scalar products in $\Lpieta$ by their equivalents in $\Lpinot$ and replace $\Leta$ by $\Lnot$ (controlling the error with Corollary~\ref{coro:scalar product}). All higher order terms are gathered in the remainder, using the inequalities $|\varepsilon| \eta^2 \leqslant |\varepsilon|^3 + |\eta|^3$ and $\varepsilon^2 |\eta| \leqslant |\varepsilon|^3 + |\eta|^3$. Finally, there exist $\varepsilon_*>0$ and $C_{R, \eta_*, \varepsilon_*, f} \in \mathbb{R}_+$ such that, for any $|\eta| \leqslant \eta_*$ and any $|\varepsilon| \leqslant \varepsilon_*$,
\begin{equation*}
  \sigma_{\phi_{\eta, \varepsilon},\eta}^2 = \sigma_{\phi_\eta,\eta}^2 + \varepsilon \eta \lang (\Lnot+\Lnot^*) f, -\Lnotinv \Pinot A R \rangnot + \varepsilon^2 \lang -\Lnot f, \Pinot f \rangnot + (\varepsilon^3 + \eta^3) E_{R, \eta, \varepsilon, f},
\end{equation*}
where $|E_{R, \eta, \varepsilon, f}| \leqslant C_{R, \eta_*, \varepsilon_*, f}$. Formula~\eqref{eq:variance with cv} then follows in view of Theorem~\ref{th:var order 2 eta exact}.
\end{proof}

\section{Technical results used in Section~\ref{s:quasi harmonic}}

\subsection{Equivalence of modified flux observables}
\label{ap:justification flux}

There exist infinitely many observables whose average is the average heat flux in the chain. In particular (see~\eqref{eq:equality fluxes}) any linear combination of the elementary fluxes with weights summing to $1$ (\textit{i.e.} of the form~\eqref{eq:linear combination flux}) has the same average. The procedure described in Section~\ref{s:strategy} allows to construct a modified observable~$\phi$ starting from any observable $R$. A legitimate question is which choice of $R$ provides the modified observable with the smallest asymptotic variance. We show here that, starting from any linear combination of the form~\eqref{eq:linear combination flux}, the resulting modified observable has the same asymptotic variance in the equilibrium setting. Note that the linear combination can involve the fluxes at the ends of the chain $j_0$ and $j_N$.

Consider two fluxes $R^1$ and $R^2 = R^1 + \calL U$, where $R^1$, $R^2$ are linear combinations of the elementary fluxes $(j_n)_{0 \leqslant n \leqslant N}$, while $U$ is a linear combinations of the elementary energies $(\varepsilon_n)_{1 \leqslant n \leqslant N}$. The function~$U$ can indeed be assumed to be of this form since $j_{n+1} = j_n - \calL \varepsilon_n$ in view of~\eqref{eq:energy balance}. The functions $R^1, R^2$ and~$U$ have their counterparts in the simplified (harmonic) setting: $R_0^2 = R_0^1 + \Lnot U_0$. The two associated simplified Poisson equations read
\begin{equation*}
\left\{ \begin{aligned}
  -\Lnot \Phi_0^1 &= R_0^1 - \Esp_0[R_0^1], \\
  -\Lnot \Phi_0^2 &= R_0^2 - \Esp_0[R_0^2].
\end{aligned} \right.
\end{equation*}
The right hand side of these two equations is modified as well since the definition of the fluxes $j_n$ depends on the potential $v$. The average $\Esp_0[R_0^1] = \Esp_0[R_0^2]$ is the heat flux for the harmonic chain. The solutions of these Poisson equations satisfy $\Phi_0^2 = \Phi_0^1 - U_0$ (up to elements of the kernel of $\Lnot$, which are constants~\cite{Carmona07}), so the two corresponding modified observables are such that
\begin{equation*}
  \phi_2 = R^2 + \calL \Phi_0^2 = R^1 + \calL U + \calL (\Phi_0^1 - U_0) = \phi_1 + \calL (U - U_0).
\end{equation*}
Assume now that the chain is at equilibrium ($\TL=\TR$). In view of Proposition~\ref{prop:variance invariance}, the two modified observables thus have the same asymptotic variance (\emph{i.e.} $\sigma^2_{\phi_1} = \sigma^2_{\phi_2}$) as soon as $U-U_0$ does not depend on $p_1$ nor on $p_N$. This is indeed the case for the elementary energies $\varepsilon_n - \varepsilon_{n,0} = \frac 1 2 (w(r_{n-1})+w(r_n))$ for $0 \leqslant n \leqslant N$, where $\varepsilon_{n,0}$ is defined by~\eqref{eq:energy balance} with $v$ replaced by $v_0$. This is in particular true at the ends of the chain ($n=0$ and $n=N$), so the boundary flux $R$ defined in \eqref{eq:heat flux sum} and the standard (bulk) flux $\widetilde R$ provide two modified observables with the same asymptotic variance.

When the temperature difference $\TL-\TR$ is not too large, the asymptotic variances of the two modified observables are approximately equal. This shows that the choice of the linear combination of the form~\eqref{eq:linear combination flux}, from which the modified observable $\phi$ is constructed, does not significantly change the asymptotic variance in this regime.

\subsection{Computation of the asymptotic variances of \texorpdfstring{$j_0$}{j0} and \texorpdfstring{$j_N$}{jN}}
\label{ap:proof variance end}

Since we are in the setting of Remark~\ref{rmk:flux eq}, we assume in this section that the system is at equilibrium ($\TL = \TR = \betainv$). Recall that $\calL \varepsilon_1 = j_0 - j_1$, and more precisely $\LFD \varepsilon_1 = j_0$ where $\LFD$ is the symmetric part of the generator at equilibrium, which is known explicitly (see~\eqref{eq:def_LFD}). Therefore, using Lemma~\ref{lemma:variance LU} with $\varphi = j_0$ and $U = - \varepsilon_1$ (so that $\varphi + \calL U = j_1$),
\begin{equation*}
\begin{aligned}
  \sigma_{j_1}^2 &= \sigma_{j_0}^2 + \lang \LFD \varepsilon_1, 2 \calLinv j_0 - \varepsilon_1 \rangeq \\
  &= \sigma_{j_0}^2 + 2 \lang j_0, \calLinv j_0 \rangeq + \lang \gamma  \betainv \papone^* \papone \varepsilon_1, \varepsilon_1 \rangeq\\
  &= -\sigma_{j_0}^2 + \gamma \betainv \left\| \papone \left( \frac {p_1^2} {2m} \right) \right\|_{\rm eq}^2 \\
  &= -\sigma_{j_0}^2 + \frac \gamma m \betainvinv.
\end{aligned}
\end{equation*}
Therefore,
\begin{equation*}
  \sigma_{j_0}^2 = \frac {\gamma} m \betainvinv - \sigma_{j_1}^2,
\end{equation*}
from which~\eqref{eq:variance boundary} follows in view of~\eqref{eq:variance and conductivity}. Similar computations give the result for $j_N$.

\subsection{Euler-Lagrange equation for~\texorpdfstring{\eqref{eq:rhar omegahar values}}{[eq]}}
\label{ap:proof minimizer fpu}

Denoting by $\hat \Omega = m \omegahar^2$, the minimization problem~\eqref{eq:minimization} can be recast as minimizing the following function for $(\rhar, \hat \Omega) \in \bbR \times (0,+\infty)$:
\begin{equation*}
\begin{aligned}
  f(\rhar, \hat \Omega) &= \int_\bbR \left[ v'(r_1) - \hat \Omega (r_1-\rhar) \right]^2\, \rme^{- \beta v(r_1)} \, \dd r_1 \\
  &= \int_\bbR \left[ v'(r_1)^2 - 2 \hat \Omega v'(r_1) (r_1-\rhar)	+ \hat \Omega^2 (r_1-\rhar)^2 \right] \, \rme^{- \beta v(r_1)} \, \dd r_1 \\
  &= \int_\bbR \left[ v'(r_1)^2 - 2 \betainv \hat \Omega	+ \hat \Omega^2 (r_1-\rhar)^2 \right] \, \rme^{- \beta v(r_1)} \, \dd r_1 \\
  &= C - 2 \betainv \hat \Omega \calM_0 + \hat \Omega^2 (\calM_2 - 2 \calM_1 \rhar + \calM_0 \rhar^2),
\end{aligned}
\end{equation*}
with $C=\int_\bbR v'(r_1)^2 \rme^{- \beta v(r_1)} \, \dd r_1$ and where the third line is obtained with an integration by parts. The gradient of $f$ vanishes if and only if:
\begin{equation*}
\left\{
\begin{aligned}
  0 &= \hat \Omega^2 (-2 \calM_1 + 2 \calM_0 \rhar), \\
  0 &= - 2 \betainv \calM_0 + 2 \hat \Omega (\calM_2 - 2 \calM_1 \rhar + \calM_0 \rhar^2).
\end{aligned}
\right.
\end{equation*}
The only solution of this system is indeed given by~\eqref{eq:rhar omegahar values}.

\subsection{Harmonic chain}
\label{ap:harmonic}

We establish in this section the formulas~\eqref{eq:cv chain} using the linear structure of the harmonic chain, see~\cite[Appendix B]{Lepri03} for similar computations. The interaction potential writes $v_0(r) = \frac 1 2 m \omega^2 (r-\rhar)^2$, so~\eqref{eq:dynamics chain} reduces to
\begin{equation}
\label{eq:harmonic process}
\left\{
\begin{aligned}
	\dd r_n &= \frac 1 m (p_{n+1} - p_n) \, \dd t, \\
	\dd p_1 &= m \omega^2 (r_1-\rhar) \, \dd t - \frac \gamma m p_1 \, \dd t + \sqrt{2 \gamma \TL} \dd W_t^L, \\
	\dd p_n &= m \omega^2 (r_n - r_{n-1}) \, \dd t, \\
	\dd p_N &= -m \omega^2 (r_{N-1}-\rhar) \, \dd t - \frac \gamma m p_N \, \dd t + \sqrt{2 \gamma \TR} \dd W_t^R.
\end{aligned}
\right.
\end{equation}
In order to simplify the algebra we make the change of variables 
\begin{equation*}
  x = (p_1, m \omega (r_1-\rhar), p_2, \cdots, p_{N-1}, m \omega (r_{N-1}-\rhar), p_N) \in \bbR^{2N-1},
\end{equation*}
and denote by $\nu = \frac {m \omega} \gamma > 0$ the dimensionless ratio between the respective time scales of the harmonic potential and of the fluctuation-dissipation process. The process~\eqref{eq:harmonic process} is in fact a generalized Ornstein-Uhlenbeck process:
\begin{equation}
  \label{eq:dynamics harmonic}
  \dd x = \frac \gamma m \bfA x \, \dd t + \sqrt {2 \gamma \betainv} \left( \bfS + \frac 1 2 \beta (\TL-\TR) \ \bfR \right)^\half \, \dd W_t,
\end{equation}
where $\betainv = (\TL+\TR)/2$ and 
\begin{equation*}
  \bfA = \nu \left( \bfJ - \bfJ^\top \right) - \bfS \in \bbR^{2N-1 \times 2N-1},
\end{equation*}
with
\begin{equation*}
\begin{aligned}
	\bfJ &= \begin{pmatrix}
	0 & 1 & & (0) \\
	& \ddots & \ddots & \\
	& & \ddots & 1 \\
	(0)& & & 0
	\end{pmatrix} , \ 
	\bfS &= \begin{pmatrix}
	1 & & (0) \\
	& (0) & \\
	(0) && 1
	\end{pmatrix} , \
	\bfR &= \begin{pmatrix}
	1 & & (0) \\
	& (0) & \\
	(0) && -1
	\end{pmatrix}.
\end{aligned}
\end{equation*}
The generator of this process writes, for any smooth function $\varphi$:
\begin{equation*}
	\Lnot \varphi(x) = \frac \gamma m x^\top \bfA^\top \nabla \varphi(x) + \gamma \betainv \left(\bfS + \frac 1 2 \beta (\TL-\TR) \ \bfR \right) : \nabla^2 \varphi(x).
\end{equation*}
Recall that the observable we consider is the heat flux $R = \frac 1 2 (j_0 + j_N)$ at the ends of the chain, with $j_0$ and $j_N$ given by~\eqref{eq:heat flux boundaries}. This corresponds to the following quadratic form:
\begin{equation*}
	R(x) = -\frac \gamma {2 m^2} x^\top \bfR x + \frac {\gamma (\TL-\TR)} {2m}.
\end{equation*}
We look for the solution $\Phinot$ to the Poisson equation
\begin{equation}
\label{eq:harmonic poisson}
	-\Lnot \Phinot = R - \Esp_0[R].
\end{equation}
The observable $R$ is the sum of a quadratic part and a constant. Since $\Lnot$ stabilizes the space of functions $x \mapsto a + x^\top \bfM x$ with $a \in \bbR$ and~$\bfM$ a symmetric matrix, we consider the ansatz
\begin{equation*}
	\Phi_0(x) = \frac 1 {2m} x^\top \bfK x + C,
\end{equation*}
where $\bfK \in \bbR^{(2N-1) \times (2N-1)}$ is symmetric and $C \in \bbR$ is chosen such that $\Esp_0[\Phi_0] = 0$. The Poisson equation~\eqref{eq:harmonic poisson} then writes: for all $x \in \bbR^{2N-1}$,
\begin{equation*}
	-\frac \gamma {m^2} x^\top \bfA^\top \bfK x - \gamma \betainv (\bfS + \beta (\TL-\TR) \ \bfR) : \frac 1 m \bfK = -\frac \gamma {2m^2} x^\top \bfR x + \frac {\gamma (\TL-\TR)} {2m} - \Esp_0[R],
\end{equation*}
which is equivalent to
\begin{equation}
\label{eq:lyapunov eq}
\left\{
\begin{aligned}
	\bfA^\top \bfK + \bfK \bfA &= \bfR, \\
	\Esp_0[R] &= \frac {\gamma (\TL-\TR)} {2m} + \frac {\gamma \betainv} m \left(\bfS + \beta \frac{\TL-\TR} 2 \ \bfR \right) : \bfK,
\end{aligned}
\right.
\end{equation}
by separating the constant and the quadratic term. The solution is in fact fully explicit since there is an analytical formula for $\bfK$.

\begin{prop}
The solution to~\eqref{eq:lyapunov eq} is the following symmetric matrix
\begin{equation}
  \label{eq:definition K}
  \bfK = -\frac 1 {2(1+\nu^2)} \big[ \nu (\bfJ + \bfJ^\top) + \bfR \big].
\end{equation}
In particular,
\begin{equation*}
  \Esp_0[R] = \frac {\nu^2}{1+\nu^2} \frac {\gamma (\TL-\TR)} {2m}.
\end{equation*}
\end{prop}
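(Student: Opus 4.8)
The plan is to verify the explicit formula~\eqref{eq:definition K} directly and then read off $\Espnot[R]$ from the second line of~\eqref{eq:lyapunov eq}. Existence and uniqueness of a symmetric solution to the first equation of~\eqref{eq:lyapunov eq} are not at stake: the drift $\frac{\gamma}{m}\bfA$ generates an ergodic Ornstein--Uhlenbeck process whose (Gaussian) invariant measure $\pinot$ exists, so $\bfA$ is Hurwitz, and the Lyapunov equation $\bfA^\top \bfK + \bfK \bfA = \bfR$ then admits a unique symmetric solution. It therefore suffices to check that the matrix~\eqref{eq:definition K} satisfies this identity.

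To carry out the verification I would introduce the antisymmetric and symmetric shift combinations $\bfB = \bfJ - \bfJ^\top$ and $\bfG = \bfJ + \bfJ^\top$, so that $\bfA = \nu \bfB - \bfS$, $\bfA^\top = -\nu\bfB - \bfS$, and $\bfK = -\frac{1}{2(1+\nu^2)}(\nu\bfG + \bfR)$. Expanding $\bfA^\top\bfK + \bfK\bfA$ and sorting by powers of $\nu$ leaves, up to the factor $-\frac{1}{2(1+\nu^2)}$, a quadratic term $\nu^2(\bfG\bfB - \bfB\bfG)$, a linear term $\nu\big((\bfR\bfB - \bfB\bfR) - (\bfS\bfG + \bfG\bfS)\big)$, and a constant term $-(\bfS\bfR + \bfR\bfS)$. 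The whole computation collapses onto the two shift identities $\bfJ\bfJ^\top = \bfI - e_{2N-1}e_{2N-1}^\top$ and $\bfJ^\top\bfJ = \bfI - e_1 e_1^\top$, where $e_1$ and $e_{2N-1}$ are the first and last canonical basis vectors of $\bbR^{2N-1}$. These give $\bfG\bfB - \bfB\bfG = 2(\bfJ^\top\bfJ - \bfJ\bfJ^\top) = -2\bfR$ and $\bfS\bfR + \bfR\bfS = 2\bfR$, while a short inspection of the nonzero rows and columns shows that $\bfR\bfB - \bfB\bfR$ and $\bfS\bfG + \bfG\bfS$ both equal $e_1 e_2^\top + e_2 e_1^\top + e_{2N-1}e_{2N-2}^\top + e_{2N-2}e_{2N-1}^\top$, so the linear term vanishes. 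Collecting the survivors yields $\bfA^\top\bfK + \bfK\bfA = -\frac{1}{2(1+\nu^2)}(-2\nu^2 - 2)\bfR = \bfR$, as required.

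For the expectation I would insert~\eqref{eq:definition K} into the second line of~\eqref{eq:lyapunov eq} and evaluate the Frobenius contractions $\bfS:\bfK$ and $\bfR:\bfK$. Since $\bfG$ has a vanishing diagonal whereas $\bfS$ and $\bfR$ are diagonal, only the $\bfR$ part of $\bfK$ contributes: using $\bfS:\bfR = 1 - 1 = 0$ one gets $\bfS:\bfK = 0$, and using $\bfR:\bfR = 2$ one gets $\bfR:\bfK = -\frac{1}{1+\nu^2}$. Hence $\big(\bfS + \beta\frac{\TL-\TR}{2}\bfR\big):\bfK = -\frac{\beta(\TL-\TR)}{2(1+\nu^2)}$, and substituting into~\eqref{eq:lyapunov eq} gives $\Espnot[R] = \frac{\gamma(\TL-\TR)}{2m}\big(1 - \frac{1}{1+\nu^2}\big) = \frac{\nu^2}{1+\nu^2}\frac{\gamma(\TL-\TR)}{2m}$.

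The only delicate point is the boundary bookkeeping in the matrix products: $\bfJ\bfJ^\top$ and $\bfJ^\top\bfJ$ differ from the identity precisely at the two corner entries, which are exactly the positions supporting $\bfS$ and $\bfR$, so I would track the first and last rows and columns carefully throughout. Everything else is routine linear algebra on tridiagonal and shift matrices.
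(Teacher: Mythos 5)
Your verification is correct and follows essentially the same route as the paper: a direct check that the candidate $\bfK$ satisfies the Lyapunov equation using commutation identities among $\bfJ\pm\bfJ^\top$, $\bfS$ and $\bfR$ (your identity $\bfR\bfB-\bfB\bfR=\bfS\bfG+\bfG\bfS$ is exactly the paper's $\bfM\bfR=-\bfN\bfS$, $\bfR\bfM=\bfS\bfN$ pair), followed by the Frobenius contractions to extract $\Espnot[R]$. You merely carry out the expansion in powers of $\nu$ more explicitly than the paper does, and your Hurwitz remark for uniqueness matches the paper's Remark~\ref{rmk:Hurwitz}.
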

\begin{proof}
  Denoting by $\bfM = \bfJ - \bfJ^\top$ and $\bfN=\bfJ + \bfJ^\top$, the following relations hold true
  \begin{equation*}
    \begin{aligned}
      \bfM \bfN - \bfN \bfM = 2 \bfR, \quad \bfM \bfR = - \bfN \bfS, \quad &\bfR \bfM = \bfS \bfN, \quad \bfR \bfS = \bfS \bfR = \bfR,\\
      \bfR:\bfR = 2, \quad \bfR:\bfS = 0, \quad &\bfS:\bfN = 0, \quad \bfR:\bfN = 0.
    \end{aligned}
  \end{equation*}
  This allows to develop $\bfA \bfK + \bfK \bfA^\top$ with $\bfK$ defined in~\eqref{eq:definition K} and obtain $\bfR$. By injecting the expression of $\bfK$ into~\eqref{eq:lyapunov eq} we obtain the expression of $\Esp_0[R]$.
\end{proof}

\begin{remark}
  \label{rmk:Hurwitz}
There exists in fact a unique solution to the Lyapunov equation~\eqref{eq:lyapunov eq} for any right hand side, since $\bfA$ is Hurwitz~\cite{Bartels72}. This latter assertion is equivalent to the exponential decay of the semigroup $\rme^{t\calL}$, proved in~\cite{Carmona07} for example for more general interaction potentials. To prove that $\bfA$ is Hurwitz, take a non-zero eigenvector $x$ associated to an eigenvalue $\lambda \in \bbC$. Suppose that $\scrR(\lambda) \geqslant 0$. Then,
\begin{equation*}
	-|x_1|^2 - |x_{2N-1}|^2 = \bar x^\top \bfA x = \scrR(\lambda) |x|^2 \geqslant 0,
\end{equation*}
so $\scrR(\lambda)=0$ and $x_1 = x_{2N-1} = 0$. Using $\bfA x = \lambda x$ we iteratively obtain $x_2 = 0$, then $x_3 = 0$, and so on until $x=0$. The contradiction proves that any eigenvalue of $\bfA$ has a negative real part.
\end{remark}

The optimal harmonic control variate $\Phinot$ is thus
\begin{equation*}
\begin{aligned}
	\Phinot(x) &= -\frac 1 {2 m (1+\nu^2)} \left[\nu \sum_{k=1}^{2N-2} x_k x_{k+1} + \frac 1 2 x_1^2 - \frac 1 2 x_{2N-1}^2 \right] + C\\
	&= \frac m {2 \gamma ( 1 + \nu^2 )} \left[-\omega^2 \sum_{n=1}^{N-1} (r_n-\rhar) (p_n + p_{n+1}) + \frac \gamma {2m^2} \left(p_N^2 - p_1^2 \right) \right] + C \\
	&= \frac m {2 \gamma ( 1 + \nu^2 )} \sum_{n=0}^N (j_{n,0} - \Esp_0[R]),
\end{aligned}
\end{equation*}
where $j_{n,0}$ is the $n$-th elementary flux~\eqref{eq:heat flux} with $v$ replaced by $v_0$. This function indeed has the dimensions of an energy since it is the product of some characteristic time by a heat flux.

\subsection{Proof of Assumption~\ref{as:generator simple} for the harmonic chain}
\label{ap:ass 4}

The space $\mathcal{S}$ is easily seen to be stable by $\calL$. We prove next that $\calL^{-1}\varphi$ is in~$\mathcal{S}$ when $\varphi \in \mathcal{S}$. Note first that it is possible to analytically integrate the dynamics~\eqref{eq:dynamics harmonic} as
\begin{equation}
\label{eq:evolution harmonic}
x_t = \rme^{\gamma t \bfA / m} x_0 + \sqrt{\frac {2 \gamma} \beta} \int_0^t \rme^{\gamma (t-s) \bfA / m} \left(\bfS + \frac 1 2 \beta (\TL-\TR) \bfR \right)^\half \, \dd W_t.
\end{equation}
The matrix $\bfA$ is Hurwitz (see Remark~\ref{rmk:Hurwitz}) so there exist $\lambda > 0$ and $C_\bfA \geqslant 1$ such that the Frobenius norm of the associated semi-group decays exponentially with rate $\lambda$:
\begin{equation*}
  \left\| \rme^{\gamma t \bfA / m} \right\| \leqslant C_\bfA \rme^{-\lambda t} \leqslant C_\bfA.
\end{equation*}
Take $\varphi \in \calS$ with mean zero with respect to~$\pi$. There exist $\theta_0, \theta_1 \in [0,\theta_*/2)$ such that $\varphi \in \rmL_{\theta_0}^\infty$ and, for any $n\in [1,2N-1]$, $\partial_{x_n} \varphi \in \rmL_{\theta_1}^\infty$. By the results of~\cite{Carmona07} (recalled in Section~\ref{sss:chain properties}) we know already that $\calLinv \varphi \in \rmL_{\theta_0}^\infty$. Denoting by $| \cdot |$ the Euclidean norm in $\bbR^{2N-1}$, and using~\eqref{eq:evolution harmonic},
  \begin{equation}
    \label{eq:bound derivative semigroup}
    \begin{aligned}
      \left| \nabla_{x_0} \left( \rme^{t \calL} \varphi \right) (x_0) \right| &= \left| \nabla_{x_0} \bbE_{x_0}[ \varphi(x_t)] \right| = \left| \Esp_{x_0} [\rme^{\gamma t \bfA / m} \nabla \varphi(x_t)] \right| \\
      &\leqslant \left\| \rme^{\gamma t \bfA / m} \right\| \, \big| \Esp_{x_0}[ \nabla \varphi(x_t) ] \big| \leqslant C_\bfA \rme^{-\lambda t} \| \nabla \varphi \|_{\rmL_{\theta_1}^\infty} \Esp_{x_0} [\calK_{\theta_1}(x_t)].
\end{aligned}
\end{equation}	

By the exponential decay of the semi-group $\rme^{t \calL}$ on the functional space $\rmL_{\theta_1}^\infty$ (see~\cite{Carmona07}), there exist $C_{\theta_1},\lambda'$ such that 
\begin{equation*}
  \left| \rme^{t \calL} \calK_{\theta_1}(x_0) - \Esp[\calK_{\theta_1}] \right| \leqslant C_{\theta_1} \rme^{-\lambda' t} \calK_{\theta_1}(x_0),
\end{equation*}
so that
\begin{equation*}
	\Esp_{x_0} [\calK_{\theta_1}(x_t)] \leqslant \Esp[\calK_{\theta_1}] + C_{\theta_1} \rme^{-\lambda' t} \calK_{\theta_1}(x_0) \leqslant C_{\theta_1}' \calK_{\theta_1}(x_0),
\end{equation*}
with $C_{\theta_1}' = \max\left( \Esp[\calK_{\theta_1}], C_{\theta_1} \right)$. Using this result and integrating~\eqref{eq:bound derivative semigroup} from $t=0$ to~$\infty$,
\begin{equation*}
\begin{aligned}
  \left| \nabla_{x_0} \calLinv \varphi (x_0) \right| &\leqslant \int_0^\infty \left| \nabla_{x_0} \rme^{t \calL} \varphi (x_0) \right| \dd t \leqslant \int_0^\infty C_\bfA \rme^{-\lambda t} \| \nabla \varphi \|_{\rmL_{\theta_1}^\infty} C_{\theta_1}' \calK_{\theta_1}(x_0) \, \dd t,
\end{aligned}
\end{equation*}
so that
\begin{equation*}
	\left\| \nabla_{x_0} \calLinv \varphi \right\|_{\rmL_{\theta_1}^\infty} \leqslant \frac {C_\bfA C_{\theta_1}'} \lambda \| \nabla \varphi \|_{\rmL_{\theta_1}^\infty}.
\end{equation*}
This implies that $\nabla \calLinv \varphi \in \rmL_{\theta_1}^\infty$. Similar formulas hold for higher order derivatives. This allows to show that $\calLinv \varphi \in \calS$, proving that the core $\Pinot \calS$ is stable by $\calLinv$. 

\section{Resolution of the differential equation~\texorpdfstring{\eqref{eq:1D dimer poisson problem}}{[eq]}}
\label{ap:resolution ode}

\newcommand{\bfr}{\mathbf{r}}

The Poisson equation~\eqref{eq:1D dimer poisson problem} can be easily solved using finite differences. In order to provide a stable numerical solution of this equation, let us first determine its boundary conditions. Denoting by $\varphi = \psi'$, \eqref{eq:1D dimer poisson problem} can be reformulated as
\begin{equation}
  \label{eq:1D problem phi}
  \betainv \varphi'(r) = r_* - r + v_*'(r) \varphi(r).
\end{equation}
Note that it is sufficient to determine $\varphi$ in order to evaluate $\calL \Phi_0$.
\begin{prop}
\label{prop:BC_1D}
Assume that $v \in \calC^1((0,+\infty), \bbR)$ is such that
\begin{equation}
  \label{eq:conditions_v}
  \limsup_{r \to 0} v'(r) < +\infty \quad \mbox{and} \quad \frac {v'(r)} r \xrightarrow[r \to +\infty]{} + \infty.
\end{equation}
Then~\eqref{eq:1D problem phi} admits a unique solution $\varphi \in \rmL^2(\pi_*)$ whose primitives are in $\rmL^2(\pi_*)$. Moreover this solution in continuous on $[0,+\infty)$, $\varphi(0) = 0$ and $\varphi$ converges to $0$ at $+\infty$.
\end{prop}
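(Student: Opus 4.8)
The plan is to treat \eqref{eq:1D problem phi} as a scalar first-order linear ODE and integrate it explicitly, then read off every claimed property from the resulting formula. Rewriting the equation as $\varphi'(r) - \beta v_*'(r)\varphi(r) = \beta(r_* - r)$ and multiplying by the integrating factor $\rme^{-\beta v_*(r)}$ gives
\[
\left( \rme^{-\beta v_*(r)}\varphi(r) \right)' = \beta(r_*-r)\,\rme^{-\beta v_*(r)}.
\]
The crucial structural fact is that the right-hand side has vanishing total integral: since $\pi_*(\dd r) = Z_*^{-1}\rme^{-\beta v_*(r)}\,\dd r$ and $r_* = \Esp_*[r]$ by definition, one has $\int_0^\infty (r_*-s)\rme^{-\beta v_*(s)}\,\dd s = 0$, which is precisely the solvability condition of the Poisson problem. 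Integrating and fixing the constant of integration to be $0$ yields the candidate
\[
\varphi(r) = \beta\,\rme^{\beta v_*(r)}\int_0^r (r_*-s)\,\rme^{-\beta v_*(s)}\,\dd s = \beta\,\rme^{\beta v_*(r)}\int_r^\infty (s-r_*)\,\rme^{-\beta v_*(s)}\,\dd s,
\]
the second equality following from the vanishing integral; I would use whichever representation is convenient near each endpoint. By construction $\varphi$ is $\calC^1$ on $(0,\infty)$ and solves \eqref{eq:1D problem phi}.

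Next I would settle the behaviour at the origin, which is exactly where the hypothesis $\limsup_{r\to0}v'(r)<+\infty$ enters: it forces $v_*'(s)=v'(s)-\frac{d-1}{\beta s}<0$ for all sufficiently small $s$ (the entropic term dominates), so $\rme^{-\beta v_*}$ is increasing near $0$. Hence for small $r$ one has $\rme^{-\beta v_*(s)}\le\rme^{-\beta v_*(r)}$ on $[0,r]$, and since $r_*-s>0$ there,
\[
0<\int_0^r (r_*-s)\rme^{-\beta v_*(s)}\,\dd s \le \rme^{-\beta v_*(r)}\int_0^r (r_*-s)\,\dd s \le r_*\, r\,\rme^{-\beta v_*(r)}.
\]
Multiplying by $\beta\rme^{\beta v_*(r)}$ gives $0<\varphi(r)\le \beta r_* r\to0$, so $\varphi$ extends continuously to $[0,\infty)$ with $\varphi(0)=0$.

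The main work is the decay at infinity, where the growth condition $v'(r)/r\to+\infty$ (equivalently $v_*'(r)/r\to+\infty$) is used. Fix $A>0$; there is $R$ with $v_*'(s)\ge As$ for $s\ge R$, so for $s\ge r\ge R$,
\[
v_*(s)-v_*(r)=\int_r^s v_*'(u)\,\dd u \ge \tfrac{A}{2}(s^2-r^2)\ge Ar(s-r).
\]
Using the second representation of $\varphi$ together with $0<s-r_*\le s$ for $s\ge r>r_*$,
\[
0<\int_r^\infty (s-r_*)\rme^{-\beta v_*(s)}\,\dd s \le \rme^{-\beta v_*(r)}\int_r^\infty s\,\rme^{-\beta A r (s-r)}\,\dd s = \rme^{-\beta v_*(r)}\left( \frac{1}{\beta^2 A^2 r^2}+\frac{1}{\beta A}\right),
\]
whence $|\varphi(r)|\le \frac{1}{\beta A^2 r^2}+\frac{1}{A}$ for $r\ge R$. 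Thus $\limsup_{r\to\infty}|\varphi(r)|\le 1/A$, and letting $A\to+\infty$ gives $\varphi(r)\to0$. This Laplace-type estimate is the delicate step; I expect it to be the only genuinely non-routine part. Its cleanliness comes from bounding $\rme^{-\beta v_*(s)}$ by a shifted exponential, which avoids an integration by parts that would otherwise require $v\in\calC^2$.

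Finally, integrability and uniqueness are short. Being continuous on $[0,\infty)$ with zero limits at both ends, $\varphi$ is bounded, hence $\varphi\in\rmL^2(\pi_*)$ since $\pi_*$ is a probability measure; its primitives grow at most linearly and $\pi_*$ has finite moments of all orders (again by $v'(r)/r\to+\infty$), so they too lie in $\rmL^2(\pi_*)$. For uniqueness, the difference $w$ of two admissible solutions solves the homogeneous equation $w'=\beta v_*'w$, i.e. $w=C\rme^{\beta v_*}$; since $v_*(r)\to+\infty$, one computes $\int w^2\,\dd\pi_* = C^2 Z_*^{-1}\int \rme^{\beta v_*}\,\dd r = +\infty$ unless $C=0$, so $w\equiv0$ and the solution is unique in the stated class.
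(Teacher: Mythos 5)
Your proof is correct and follows essentially the same route as the paper's: the same explicit solution $\varphi(r) = \beta\, \rme^{\beta v_*(r)}\int_0^r (r_*-s)\,\rme^{-\beta v_*(s)}\,\dd s$ obtained from the integrating factor, the same use of the zero-mean solvability condition to pass to the tail representation at infinity, and the same uniqueness argument via non-admissibility of the homogeneous solution $C\rme^{\beta v_*}$. The only (harmless) difference lies in the endpoint estimates: the paper controls $z(r)$ near $0$ and $e_\infty-e(r)$ near $+\infty$ through the auxiliary functions $\overline{v_*'}$ and $\underline{v_*'}$, whereas you use the monotonicity of $\rme^{-\beta v_*}$ near the origin and a shifted-exponential comparison at infinity; both yield the required vanishing of $\varphi$ at $0$ and $+\infty$.
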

The conditions~\eqref{eq:conditions_v} are satisfied for the double-well potential~\eqref{eq:double well} and for many potentials used in practice. They imply in particular that $v_*(r) \xrightarrow[r \to 0]{} +\infty$ and that $\pi_*$ vanishes at $0$ and $+\infty$.
\begin{proof}
Let us introduce the function
\begin{equation*}
  f(r) = \int_0^r \beta \left( r_* - s \right) \rme^{-\beta v_*(s)} \dd s.
\end{equation*}
We prove that $\varphi(r) = f(r) \rme^{\beta v_*(r)}$ is the only bounded solution to~\eqref{eq:1D problem phi}, and that it vanishes at the boundary of the domain. We first obtain bounds on~$f$ to this end. The function~$f$ satisfies $f(0)=0$ and $f'(r) = \beta \left( r_* - r \right) \rme^{-\beta v_*(r)}$. Using the short-hand notation $z(r) = \int_0^r \rme^{-\beta v_*}$ (with limiting value $z_\infty$ as $r\to +\infty$) and $e(r) = \int_0^r s \rme^{-\beta v_*(s)} \dd s$ (with limiting value $e_\infty$ as $r\to +\infty$), $f$ can be rewritten as
\begin{equation}
  \begin{aligned}
    \label{eq:f(r) rewritten}
    f(r) &= \beta r_* z(r) - \beta e(r) \\
    &= \beta e_\infty \left( \frac {z(r)} {z_\infty} - \frac {e(r)}{e_\infty} \right) = \beta e_\infty \left( \frac {e_\infty-e(r)}{e_\infty} - \frac {z_\infty-z(r)} {z_\infty} \right),
  \end{aligned}
\end{equation}
since $r_* = e_\infty / z_\infty$, which shows that $f(r) \xrightarrow[r \to \infty]{} 0$. Note that $f$ is increasing on $[0,r_*]$, decreasing on $[r_*,+\infty]$ and vanishes at $0$ and infinity. Therefore,~$f \geqslant 0$. Let us now bound the behavior of this function near $0$ and $+\infty$, in order to prove that $\varphi$ vanishes at $0$ and at $+\infty$. In view of~\eqref{eq:conditions_v}, there exist $0<\varepsilon<M<+\infty$ such that $v_*'(r)=v'(r)- \frac {d-1}{\beta r}$ is negative on $(0, \varepsilon]$ and positive on $[M, +\infty)$. Define
\begin{equation*}
  \overline{v_*'}(r) = \sup_{0 < s \leqslant r} v_*'(s), \qquad \underline{v_*'}(r) = r \inf_{s \geqslant r} \frac {v_*'(s)} s.
\end{equation*}
The functions $\overline{v_*'}$ and $\underline{v_*'}$ are increasing on $(0,+\infty)$, $\overline{v_*'}$ converges to $-\infty$ as $r \to 0$ while $\underline{v_*'}$ converges to $+\infty$ as $r \to +\infty$. Moreover, by definition,
\begin{equation*}
  \forall 0 < s \leqslant r \leqslant \varepsilon, \quad 1 \leqslant \frac {v_*'(s)}{\overline{v_*'}(r)}, \qquad \mbox{and} \quad \forall M \leqslant r \leqslant s, \quad 1 \leqslant \frac {v_*'(s)/s}{\underline{v_*'}(r)/r}.
\end{equation*}
Therefore,
\begin{equation}
\begin{aligned}
\label{eq:bounds e z}
	\forall r \leqslant \varepsilon, \quad z(r) &= \int_0^r \rme^{-\beta v_*(s)} \dd s \leqslant \frac 1 {\beta \overline{v_*'}(r)} \int_0^r \beta v_*'(s) \rme^{-\beta v_*(s)} \dd s = -\frac 1 {\beta \overline{v_*'}(r)} \rme^{-\beta v_*(r)}, \\
	\forall r \geqslant M, \quad e_\infty-e(r) &= \int_r^\infty s \rme^{-\beta v_*(s)} \dd s\leqslant \frac r {\beta \underline{v_*'}(r)} \int_r^\infty \beta v_*'(s) \rme^{-\beta v_*(s)} \dd s = \frac{r}{\beta \underline{v_*'}(r)} \rme^{-\beta v_*(r)}.
\end{aligned}
\end{equation}
From~\eqref{eq:f(r) rewritten} and~\eqref{eq:bounds e z} we deduce that the solution $\varphi(r) = f(r) \rme^{\beta v_*(r)}$ of~\eqref{eq:1D problem phi} is non negative on~$\bbR_+^*$ and satisfies
\begin{equation*}
\begin{aligned}
	\forall r \leqslant \varepsilon, \quad 0 \leqslant \varphi(r) \leqslant \beta r_* z(r) \rme^{\beta v_*(r)} \leqslant r_* \ \frac 1 {\left| \overline{v_*'}(r) \right|},\\
	\forall r \geqslant M, \quad 0 \leqslant \varphi(r) \leqslant \beta (e_\infty - e(r)) \rme^{\beta v_*(r)} \leqslant \frac r {\underline{v_*'}(r)}.
\end{aligned}
\end{equation*}
This shows that $\varphi$ vanishes at $0$ and $+\infty$. Moreover, any primitive $\psi$ of $\varphi$ is in $\rmL^2(\pi_*)$ (because $\psi'=\varphi$ is bounded and $\pi_*$ integrates functions which increase linearly). The other solutions of~\eqref{eq:1D problem phi} differ from this one by a factor proportional to $\rme^{\beta v_*(r)}$ (which is the solution of the homogeneous equation associated with~\eqref{eq:1D problem phi}) so that their primitives $\psi$ are not in $\rmL^2(\pi_*)$.
\end{proof}

Proposition~\ref{prop:BC_1D} shows that the solution $\varphi$ of~\eqref{eq:1D problem phi} we are interested in corresponds to the boundary condition $\varphi(0) = 0$. This solution is estimated numerically using a finite difference method. The expectation $r_* = \Esp_*[r]$ is computed with a one-dimensional numerical quadrature. The so-obtained solution is then interpolated by a function $\widehat \varphi$ which is affine on each mesh, so that $\calL \widehat \psi$ can be evaluated exactly at any point. This ensures that the modified observable is not biased since the control variate indeed belongs to the image of $\calL$.

\section{Asymptotic variance estimator}
\label{ap:variance}

\newcommand{\Ndeco}{N_{\rm deco}}
\newcommand{\Nrep}{N_{\rm rep}}

In the three applications we consider, we provide estimators of the asymptotic variances associated with some function $\varphi$ together with error bars on this quantity. We make precise in this section this estimator of the variance and how error bars on these variance estimates are computed. Under Assumptions~\ref{as:pieta} to~5, the stochastic process admits a unique invariant probability measure $\pi$, and the asymptotic variance is well defined for an observable $\varphi = \Pi \varphi + \Esp[\varphi] \in \calS$ (we suppress in this section the subscripts $\eta$ in order to simplify the notation). The empirical mean of $\varphi$ is
\begin{equation*}
	\widehat \varphi_t = \frac 1 t \int_0^t \varphi(x_t) \, \dd t.
\end{equation*}
The associated asymptotic variance~\eqref{eq:def variance} can be computed using the Green--Kubo formula~\cite{Kubo91}
\begin{equation*}
  \begin{aligned}
    \sigma_\varphi^2 & = 2 \int_\mathcal{X} \varphi \left(-\calLinv \Pi \varphi\right) \, \dd \pi
    = 2 \int_0^\infty \Esp_{x_0}\left[ \Pi\varphi(x_s) \Pi \varphi(x_0) \right] \, \dd s \\
    & = 2 \int_0^\infty \left( \Esp_{x_0}\left[ \varphi(x_s) \varphi(x_0) \right] - \Esp[\varphi]^2 \right) \, \dd s,
\end{aligned}
\end{equation*}
where $\Esp$ denotes the expectation with respect to initial conditions $x_0$ distributed according to the invariant probability measure~$\pi$ and for all realizations of the dynamics with generator~$\mathcal{L}$. All these expressions are well defined if we assume a sufficiently fast decay of the associated semi-group (see~\cite[Section 3.1.2]{Lelievre16}). In order to approximate $\sigma_\varphi^2$ we first truncate the time integral as
\begin{equation*}
  \sigma_\varphi^2 \approx 2 \int_0^{t_{\rm deco}} \Esp_{x_0}\left[ \varphi(x_s) \varphi(x_0) \right] \, \dd s - 2 t_{\rm deco} \Esp[\varphi]^2,
\end{equation*}
where the integrand $\Esp\left[ \varphi(x_s) \varphi(x_0) \right]$ is neglected for $s > \tdeco$. The expectations in the integrand are estimated using an empirical average over all the continuous trajectory $(x_t)_{t \in [0,T]}$ (see~\cite{Anderson71}):
\begin{equation}
\label{eq:estimator sokal continu}
\widehat{\sigma_\varphi}^2 = \frac 1 \tsimu \int_0^T \int_{-\tdeco}^{\tdeco} \varphi(x_t) \varphi(x_{t+s}) \, \dd t \dd s - 2 t_{\rm deco} \widehat \varphi_T^2,
\end{equation}
which is a biased estimator of $\sigma_\varphi^2$:
\begin{equation}
  \label{eq:bias estimator}
  \Esp \left[ \widehat{\sigma_\varphi}^2 \right] = 2 \int_0^{t_{\rm deco}} \Esp_{x_0}\left[ \varphi(x_s) \varphi(x_0) \right] \, \dd s - 2 t_{\rm deco} \Esp[\varphi]^2.
\end{equation}
Of course, in practice, the formula for $\widehat{\sigma_\varphi}^2$ is slightly changed in order not to involve $x_t$ for $t<0$ or $t>T$. The double integral is approximated using a Riemann sum or a trapezoidal rule for instance. Consider a discretization $(x^n)_{1 \leqslant n \leqslant N_{\rm iter}}$ of the trajectory $(x_t)_{t \in [0,T]}$ with a timestep~$\Dt$, of length $\tsimu = N_{\rm iter} \Dt$. Introducing $\Ndeco = \tdeco / \Dt$, the discretized version of the estimator~\eqref{eq:estimator sokal continu} is
\begin{equation}
\label{eq:estimator sokal}
	\widehat {\widehat{\sigma_\varphi}}^2 = \frac {\Dt} {N_{\rm iter}} \sum_{i=1}^{N_{\rm iter}} \sum_{j=-\Ndeco}^{\Ndeco} \varphi(x^i) \varphi(x^{i+j}) - 2 t_{\rm deco} \left( \frac 1 {N_{\rm iter}}\sum_{i=0}^{N_{\rm iter}} \varphi(x^n) \right)^2.
\end{equation}
This is the estimator we use throughout this work to provide error bars on average properties. The leading term of the variance of the estimator $\widehat{\widehat{\sigma_\varphi}}^2$ in the regime $\Dt \ll 1 $ and $1 \ll \Ndeco \ll N_{\rm iter}$ is
\begin{equation*}
	\rmVar \left[ \widehat{\widehat {\sigma_\varphi}}^2 \right] \approx \frac {2 (2 \Ndeco+1)} {N_{\rm iter}} \sigma_\varphi^4 \approx \frac {4 t_{\rm deco}} {\tsimu} \sigma_\varphi^4.
\end{equation*}
Here we made the assumption that Isserlis' theorem~\cite{Isserlis18} holds, as if $(x_t)_t$ was a Gaussian process. It is thus straightforward to provide error bars for the estimator $\widehat{\widehat{\sigma_\varphi}}^2$, and even to choose the simulation time $\tsimu$ a priori. Indeed the relative standard statistical error on the variance is very explicit:
\begin{equation*}
	\frac{\sqrt{\rmVar \left[ \widehat{\widehat {\sigma_\varphi}}^2 \right]}}{\sigma_\varphi^2} \approx 2 \sqrt {\frac{\tdeco}{\tsimu}}.
\end{equation*}
For example to estimate the variance with an uncertainty of $1\%$ one should run the simulation for a time $\tsimu = 10^4 \times \tdeco$. There is a trade-off concerning the choice of $t_{\rm deco}$: if it is too small the estimators of the integrals are biased in view of~\eqref{eq:bias estimator}, but if it is too large the variance of the estimator increases. In practice one picks a large value of $t_{\rm deco}$ and uses the cumulated empirical autocorrelation profile to check a posteriori that this value is indeed sufficiently large.

\paragraph{Block averaging.}

Let us relate the previous estimator of the variance to the common variance estimator $\widetilde{\sigma_\varphi}^2$ considered in the method of block averaging (or batch means); see~\cite{Petersen89} as well as the references in~\cite[Section 2.3.1.3]{Lelievre10}. This method consists in cutting the trajectory into several blocks, computing the empirical average of $\varphi$ on each block, and estimating the variance of these random variables (considered as independent and identically distributed). If the size of the blocks is $2 \tdeco$ this estimator has the same variance as $\widehat{\sigma_\varphi}^2$ but the bias is different since
\begin{equation*}
  \Esp \left[ \widetilde{\sigma_\varphi}^2 \right] = 2 \int_0^{2t_{\rm deco}} \left( 1 - \frac s {2 \tdeco} \right) \Esp_{x_0}\left[ \varphi(x_s) \varphi(x_0) \right] \, \dd s - 2 t_{\rm deco} \Esp[\varphi]^2.
\end{equation*}

\paragraph{Implementation.}

It is crucial to compute on-the-fly the first term of the estimator~\eqref{eq:estimator sokal}, without resorting to a double sum which is computationally prohibitive. In practice the sum $S_i = \sum_{j=0}^{\Ndeco} \varphi(x^{i-j})$ is not recomputed from scratch at every time step but updated using $S_{i+1} = S_i + \varphi(x^{i+1}) - \varphi(x^{i-\Ndeco})$. The complexity of this algorithm is thus independent of the choice of $\tdeco$.

\newpage
\bibliographystyle{abbrv}
\bibliography{bibliography}

\end{document}